\setlist[enumerate]{label=(\thethm.\arabic*), before={\setcounter{enumi}{\value{equation}}}, after={\setcounter{equation}{\value{enumi}}}}
\newcommand{\R}{\mathbb{R}}
\newcommand{\CC}{\mathbb{C}}
\newcommand{\Q}{\mathbb{Q}}
\newcommand{\Z}{\mathbb{Z}}
\newcommand{\ddbar}{\partial\bar{\partial}}
\newcommand{\wh}{\widehat}
\newcommand{\Dom}{\mathrm {Dom}}
\newcommand{\cC}{\mathcal{C}}
\newcommand{\cH}{\mathcal{H}}
\newcommand{\cP}{\mathcal{P}}
\newcommand{\cG}{\mathcal{G}}
\renewcommand{\O}{\mathcal{O}}
\newcommand{\ep}{\varepsilon}
\renewcommand{\epsilon}{\varepsilon}
\renewcommand{\ker}{\mathrm{Ker} \,}
\newcommand{\ol}{\overline}
\renewcommand{\leq}{\leqslant}
\renewcommand{\geq}{\geqslant}
\newcommand{\Div}{\mathrm{Div}}
\newcommand{\Id}{\mathrm{Id}}
\newcommand{\Imm}{\mathrm{Im} \,}
\newcommand{\Ker}{\mathrm{Ker}}
\newcommand{\Res}{\mathrm{Res}}
\newcommand{\dbar}{\bar \partial}
\newtheorem{thm}{Theorem}[section]
\newtheorem{lemme}[thm]{Lemma}
\newtheorem{proposition}[thm]{Proposition}
\newtheorem{quest}[thm]{Question}
\newtheorem{exemple}[thm]{Example}
\newtheorem{defn}[thm]{Definition}
\newtheorem{cor}[thm]{Corollary}
\newtheorem{remark}[thm]{Remark}
\numberwithin{equation}{thm}
\title{$\ddbar$-lemmas and a conjecture of O. Fujino}
\author{Junyan CAO, Mihai PAUN}
\address{Université Côte d’Azur, CNRS, LJAD, France}
\email{junyan.cao@unice.fr}
\address{Universit\"at Bayreuth, Mathematisches Institut, Lehrstuhl Mathematik VIII, Universit\"atsstrasse 30, D-95447, Bayreuth, Germany}
\email{mihai.paun@uni-bayreuth.de}
\begin{document} 

\maketitle


\section{Introduction}

\noindent Let $(X, \omega_X)$ be a $n$-dimensional compact Kähler manifold, and let $(F, h_F)$ be a holomorphic line bundle on $X$ endowed with a Hermitian metric $h_F$. If the curvature form 
$\displaystyle i\Theta(F, h_F)\geq 0$ is semi-positive, then for every $(n, q)$-form $v$ with values in $F$ 
such that 
$$\dbar v= 0, \qquad \int_X\langle[i\Theta(F, h_F), \Lambda_{\omega_X}]^{-1}v, v\rangle dV< \infty$$ and whose $L^2$ norm is finite one can solve the equation 
\begin{equation}\label{i1}
\dbar u= v,
\end{equation}
where $u$ is an $F$-valued $(n, q-1)$-form whose $L^2$--norm is finite. Moreover, the solution $u$ of \eqref{i1} which is orthogonal to 
$\Ker(\dbar)$ satisfies very precise $L^2$ estimates, cf. \cite{bookJP}. 
\smallskip

\noindent Consider next an snc divisor $E$ on $X$, and the corresponding space of forms with log poles along $E$.  Hodge theory results in \cite{Del}, \cite{EV86}
show that for any logarithmic $(n, q)$-form $v$
$$\dbar v= 0, \qquad v= \partial w$$
(where the first equality holds pointwise in the complement of the support of $E$ and $w$ is a $(n, q-1)$ log form) there exists a form $u$ with log poles along $E$ such that 
\begin{equation}\label{i2}
\dbar u= v,
\end{equation}
holds on $X\setminus E$. This is the log-version of the familiar $\ddbar$ lemma in Hodge theory. 
\smallskip

\noindent We remark that one could try to obtain \eqref{i2} as consequence of the $L^2$-theory, i.e. by interpreting 
$v$ as $(n, q)$-form with values in $F:= \O(E)$, the line bundle corresponding to $E$. However, the only natural, semi-positively curved metric $h_F$ we have at hand is singular along $E$, and the $L^2$ condition we have mentioned above means that we impose $v$ to vanish on $E$ --in conclusion, we would get a much weaker result. 
\medskip

\noindent In this article we are aiming at the following general vanishing theorem, which loosely speaking interpolates between Hodge and $L^2$-theory, respectively.

\begin{thm}\label{ddbar}
	Let $X$ be a $n$-dimensional compact Kähler manifold and let $E$ be a snc divisor on $X$, $s_E$ be the canonical section of $E$. Let $(L, h_L)$ be a holomorphic line bundle on $X$ such that
		$$i\Theta_{h_L} (L) = \sum q_i [Y_i] + \theta_L,$$ where $q_i \in ]0,1[ \cap \mathbb Q$, $E +\sum Y_i$ is snc, and the form $\theta_L$ is smooth, semi-positive.
Let $\lambda$ be a $\dbar$-closed smooth $(n,q)$-form with value in $L+E$. If there exists $\beta_1$ and $\beta_2$, two $L$-valued $(n-1, q)$-form and $(n-1, q-1)$-form with log poles along $E+\sum Y_i$ respectively, such that
\begin{equation}\label{yes!}
\frac{\lambda}{s_E} = D' _{h_L}\beta_1 + \theta_L \wedge \beta_2 \qquad\text{ on } X\setminus (E+\sum Y_i) ,
\end{equation}	
then the form $\lambda$ is $\dbar$-exact, i.e, 
	the class $[\lambda]=0 \in H^q (X, K_X +L+E)$.
\end{thm}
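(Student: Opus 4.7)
The strategy combines a smooth regularization of the singular Hermitian metric $h_L$ with an $L^2$-argument in the spirit of Hörmander on a non-compact Kähler manifold, using the logarithmic $\partial\bar\partial$-lemma of Deligne and Esnault--Viehweg as the Hodge-theoretic input. Heuristically, the hypothesis \eqref{yes!} provides the explicit data $(\beta_1,\beta_2)$ needed to build a primitive of $\lambda$, and the semi-positivity of $\theta_L$ is what allows the remaining $\bar\partial$-equation to be solved in $L^2$. The cohomological conclusion then follows from a limit extraction that matches the log-pole structure of $\beta_1,\beta_2$ with the $L^2$-asymptotics of the $\bar\partial$-solutions.

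More concretely, I would proceed as follows. \emph{Regularization.} Construct smooth Hermitian metrics $h_{L,\ep}$ on $L$ by replacing each factor $|s_{Y_i}|^{-2q_i}$ in $h_L$ by $(|s_{Y_i}|^2+\ep)^{-q_i}$, so that $i\Theta_{h_{L,\ep}}(L)$ is smooth, converges weakly to $\theta_L+\sum q_i[Y_i]$ and remains (almost) semi-positive. \emph{Compatibility.} Apply $\bar\partial$ to the hypothesis \eqref{yes!} and use the Bianchi identity $\bar\partial D'_{h_L}+D'_{h_L}\bar\partial = i\Theta_{h_L}(L)\wedge$ together with $\bar\partial\lambda=0$ to derive, on $X\setminus(E+\sum Y_i)$, the integrability relation
\[
D'_{h_L}\bar\partial\beta_1 \;=\; \theta_L\wedge(\beta_1+\bar\partial\beta_2),
\]
which encodes the closedness of the pair $(\beta_1,\beta_2)$ in the relevant twisted logarithmic de Rham complex. \emph{Candidate primitive and $L^2$-correction.} Form an explicit candidate $u_0$ for the primitive of $\lambda$ built around $D'_{h_L\otimes h_E}(s_E\beta_2)$ for a smooth metric $h_E$ on $\O(E)$; the compatibility above ensures that $\bar\partial u_0 - \lambda$ is a smooth error term that is under analytic control. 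Equip $X\setminus(E+\sum Y_i)$ with a complete Poincaré-type Kähler metric along $E+\sum Y_i$ (for which log-forms are $L^2$) and apply Hörmander's $L^2$-estimate with the smooth metric $h_{L,\ep}\otimes|s_E|^{-2}$ to solve $\bar\partial w_\ep = \bar\partial u_0 - \lambda$ with uniform $L^2$-bounds in $\ep$. \emph{Limit.} Let $\ep\to 0$, extract a limit $w$, and set $u:=u_0+w$; the log-pole control on $\beta_1,\beta_2$ combined with the ellipticity of $\bar\partial$ should ensure that $u$ extends as a smooth $(n,q-1)$-form with values in $L+E$ on the whole of $X$, so $[\lambda]=[\bar\partial u]=0$ in $H^q(X, K_X+L+E)$.

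The main obstacle will be the limit step: the $L^2$-solutions $w_\ep$ concentrate near $\sum Y_i$ (where the regularized curvature grows) and have logarithmic growth near $E$ (from the twist by $|s_E|^{-2}$), so convergence is delicate. The rational exponents $q_i\in\,]0,1[$ and the snc assumption on $E+\sum Y_i$ are crucial here to balance the Poincaré metric against the weights, and to guarantee that the distributional contributions of $\sum q_i[Y_i]$ in the compatibility relation disappear in the limit. Turning $L^2$-control into a statement at the level of $H^q(X, K_X+L+E)$ requires a careful matching between the log-Hodge residues of $\beta_1,\beta_2$ and the $L^2$-asymptotics of the solutions, which is where most of the technical work should lie.
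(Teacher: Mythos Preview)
Your outline does not come to grips with the central difficulty, which is the \emph{reduced} divisor $E$. Solving $\dbar w_\ep=\dbar u_0-\lambda$ in $L^2$ with respect to the weight $h_{L,\ep}\otimes|s_E|^{-2}$ forces any limit $w$ to lie in the multiplier ideal of $|s_E|^{-2}$, i.e.\ to vanish along $E$; so $u=u_0+w$ can only represent the class of $\lambda$ if the residues along $E$ are already entirely carried by your explicit term $u_0$. But $u_0=D'_{h_L\otimes h_E}(s_E\beta_2)$ does not produce $\lambda$ up to an error that is $L^2$ for this weight: the discrepancy $\dbar u_0-\lambda$ still has a genuine first-order pole along $E$ (coming from $D'_{h_L}\beta_1$), and no amount of Poincar\'e-metric bookkeeping absorbs a pole of order one. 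A second, independent problem is curvature: on $X\setminus E$ the metric $|s_E|^{-2}$ on $\O(E)$ has curvature equal to a fixed smooth representative of $c_1(E)$, which has no reason to be semi-positive, so H\"ormander's inequality is simply unavailable for the bundle $L+E$ with your weights.

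The paper's proof is organised quite differently and is designed precisely to avoid the trap above. One works with a \emph{conic} metric $\omega_\cC$ along $\sum(1-\frac1m)Y_i$ (not a Poincar\'e metric), keeps a \emph{smooth} metric on $\O(E)$, and argues by duality: for a test form $\varphi\in C^\infty_{0,n-q}(X,-L-E)$ with compact support in $X\setminus Y$ one splits $\varphi=\varphi_1+\varphi_2$ with $\varphi_1\in\Ker\dbar$, and the whole proof reduces to showing $\int_X s_E(D'_{h_L}\beta_1+\theta_L\wedge\beta_2)\wedge\varphi_1=0$. This vanishing is the heart of the matter and is established by an induction on the strata $E_I$ of $E$: one takes residues of $\widehat\Theta:=D'_{h_L}\beta_1+\theta_L\wedge\beta_2$ along successive components of $E$, applies the conic Green operators $\cG_{I}$ and $\dbar^{\star_I}$ on each stratum, and uses that harmonic $(0,\bullet)$-forms on every stratum satisfy $D'\gamma=0$ and $\theta_L\wedge\gamma=0$ to kill the boundary terms. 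Nothing in your proposal provides a substitute for this residue/Green-operator induction, and without it the $E$-contribution cannot be controlled.
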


\medskip

\noindent As consequence of the methods developed in this paper (especially Theorem \ref{ddbar}) and in \cite{CPcan} we obtain the following injectivity statement, conjectured by O.~Fujino long ago (cf. \cite{Fuj17}).

\begin{thm}\cite[Conj 2.21]{Fuj17}\label{fujino}
Let $X$ be a compact Kähler manifold and let $E =\sum E_i$ be a simple normal crossing divisor on $X$. We consider a semi-positive line $F$
bundle on $X$ and a holomorphic section $s$ of $mF$ (for some $m\in\mathbb N$). We assume moreover that the zero locus of $s$ contains no lc centres of the lc pair $(X, E)$. Then the multiplication map
induced by the tensor product with $s$
\begin{equation}\label{inject}
	s\otimes:  \qquad H^q (X, K_X + E + F) \rightarrow H^q (X, K_X +E + (m+1)F)
	\end{equation}
is injective for every $q\in \mathbb N$.
\end{thm}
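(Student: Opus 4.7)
Let $[\lambda]\in H^q(X,K_X+E+F)$ satisfy $s\otimes[\lambda]=0$, so that there exists an $(n,q-1)$-form $U$ with values in $E+(m+1)F$ and log poles along $E$ with $s\otimes\lambda=\dbar U$. The plan is to apply Theorem~\ref{ddbar} with $L=F$ equipped with a suitable singular metric built from $s$, and to conclude $[\lambda]=0$. Two geometric inputs are needed: a curvature decomposition for $F$ matching the hypothesis of Theorem~\ref{ddbar}, and a decomposition \eqref{yes!} for $\lambda/s_E$ extracted from the equation $s\otimes\lambda=\dbar U$.

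\textbf{The metric and its curvature.} Fix a small rational number $a\in(0,1/m)$ and set $h'_F:=h_F\cdot|s|_{h_F^m}^{-2a}$. By Poincaré--Lelong,
$$i\Theta(F,h'_F)=(1-am)\theta_F+a\sum_i m_i[Y_i],$$
where $\{s=0\}=\sum_i m_i Y_i$ as a divisor. Writing $q_i:=a\,m_i\in(0,1)\cap\mathbb{Q}$ (for $a$ small enough) and $\theta_L:=(1-am)\theta_F$ (smooth and semi-positive) puts us exactly in the setting of Theorem~\ref{ddbar}. After a log resolution we may assume $E+\sum Y_i$ is snc; the hypothesis that $\{s=0\}$ contains no lc centre of $(X,E)$ ensures that the resulting birational model is compatible with the cohomology group under consideration.

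\textbf{Producing \eqref{yes!} and conclusion.} Dividing $\dbar U=s\otimes\lambda$ by the holomorphic sections $s$ and $s_E$, and using $\dbar s=\dbar s_E=0$, we obtain $\dbar V=\lambda/s_E$ on $X\setminus(E+\sum Y_i)$, where $V:=U/(s\cdot s_E)$ is an $(n,q-1)$-form with values in $F$, logarithmic along $E$ and with ordinary poles along $\{s=0\}$. Since $\lambda/s_E$ is of bidegree $(n,q)$ it is automatically $\partial$-closed, and a log $\ddbar$-lemma in this weighted setting---combining the harmonic theory developed in \cite{CPcan} with the methods underlying Theorem~\ref{ddbar}---converts $V$ into $F$-valued log forms $\beta_1$ (of type $(n-1,q)$) and $\beta_2$ (of type $(n-1,q-1)$) along $E+\sum Y_i$ satisfying
$$\lambda/s_E=D'_{h'_F}\beta_1+\theta_L\wedge\beta_2\quad\text{on }X\setminus(E+\sum Y_i).$$
Theorem~\ref{ddbar} then yields $[\lambda]=0$. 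The main obstacle lies in the construction of $\beta_1,\beta_2$ with the right regularity: one must convert the $\dbar$-exactness of $\lambda/s_E$ (witnessed by the singular primitive $V$) into a $D'_{h'_F}$-primitive modulo a curvature correction, and control the singularities of the $\beta_i$ along $E+\sum Y_i$ despite the fact that $V$ has poles of order $m_i\geq 1$ along $Y_i$. The lc-centre hypothesis is precisely what makes the regularization and harmonic-representation results of \cite{CPcan} applicable across the strata of $E$.
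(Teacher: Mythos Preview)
Your overall strategy---build a singular metric on $F$ from $s$ and invoke Theorem~\ref{ddbar}---matches the paper's, but the proposal has a genuine gap at the step you yourself flag as ``the main obstacle''. The primitive $V=U/(s\cdot s_E)$ has poles of order $m_i\geq 1$ along each component $Y_i$ of $\{s=0\}$, not logarithmic poles. Theorem~\ref{ddbar} requires $\beta_1,\beta_2$ to have \emph{log} poles along $E+\sum Y_i$, and the conversion lemma (Lemma~\ref{killpole} in the paper) that turns a $\dbar$-primitive into $D'_{h_L}\beta_1+\dbar\theta+\theta_L\wedge\beta_2$ takes as input an $(n,q-1)$-form with \emph{simple} poles. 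Your appeal to ``a log $\ddbar$-lemma in this weighted setting'' and to \cite{CPcan} does not explain how to reduce higher-order poles to simple ones, and there is no such reduction available in one step.

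The paper's proof circumvents exactly this difficulty by an inductive decomposition (Lemma~\ref{reduceklt}): it factors the multiplication by $s$ as a composition of maps $s_k$, each of which has divisor with multiplicities in $\{0,1\}$ along the $F_i$. At every intermediate stage the bundle $F_k=F+a_{1,k}F_1+\cdots$ still carries a metric whose curvature has fractional coefficients in $]0,1[$ (this is the content of \eqref{kltpair}), so Theorem~\ref{ddbar} applies; and since $s_k$ vanishes to order at most one, the form $\alpha/(s_E s_k)$ has only log poles and Lemma~\ref{killpole} produces the required $\beta_1,\beta_2,\theta$. Your single-step metric $h'_F=h_F|s|^{-2a}$ is fine on the curvature side, but it does nothing to tame the pole order of $V$; that is what the sequence $(a_{i,k})$ is for.
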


\noindent We refer to \cite{Mat19, CC22} for some special cases of the above result.
\medskip

\noindent Given the results obtained in \cite{Mat18}, it would be natural to adapt our method to the case of a singular metric $h_F$ (with arbitrary singularities). More precisely, we propose the following question.

\begin{quest}
Let $E =\sum E_i$ be a simple normal crossing divisor on a compact Kähler manifold $X$ and let $F$ be a holomorphic line bundle on $X$, endowed with a possible singular metric $h_F$ such that $\displaystyle i\Theta_{h_F} (F) \geq 0$. We suppose that $\mathcal{I} (h_F) = \mathcal{I} (h_F \cdot e^{-(1-\ep) \log |s_E|^2})$ for every $\ep >0$. 

Let $s$ be a (holomorphic) section of  $m F$ (for some $m\in\mathbb N$) such that $\displaystyle |s|_{h^m_F} <+\infty$ point-wise and the zero locus of $s$ contains no lc centres of the lc pair $(X, E)$. Then the multiplication map
induced by the tensor product with $s$
\begin{equation}\label{injectq}
	s\otimes:  \qquad H^q \big(X, K_X \otimes E \otimes F \otimes \mathcal{I} (h)\big) \rightarrow H^q \big(X, K_X \otimes E \otimes (m+1)F \otimes \mathcal{I} (h^{m+1})\big)
\end{equation}
is injective for every $q\in \mathbb N$.
\end{quest}
\medskip

\noindent Theorem \ref{ddbar} can be seen as a generalisation of the familiar $\ddbar$ lemma. Hence it should be no surprise that Hodge theory plays a key role in its proof.
The corresponding versions of Hodge decomposition theorem needed here, for metrics with Poincar\'e or conic singularities, respectively will be established in Section \ref{hodge}. We refer to \cite{Naumann} and the references therein for statements with a similar flavour. 
\smallskip

\noindent  
In this paper we introduce and study the notion of a $(p, q)$-current with values in a singular Hermitian bundle 
$(L, h_L)$, cf. Definition \ref{currents} in Section \ref{current}. The terminology refers to the fact that under the hypothesis of Theorem \ref{ddbar} we are forced to work with a K\"ahler metric with conic singularities on $X$; moreover, the model for the type of currents we have to manipulate here is provided by the RHS of \eqref{yes!}. 

\noindent The Green operator of such a current $T$ is defined by duality
\begin{equation}
\int_X \cG(T) \wedge \ol{\star \phi} := \int_{X}T\wedge\ol{\star \cG (\phi)},
\end{equation}
for any  $L$-valued $(p, q)$-form $\phi$, smooth in conic sense --for this important notion, as well for the definition of $\cG (\phi)$ we refer to 
Sections \ref{hodge} and \ref{current}. 

\noindent As consequence of our version of Hodge decomposition in Section {hodge}, we have a decomposition of the current $T$ which parallels the case of differential forms 
\begin{equation}
	T= \cH(T)+ \Delta''\big(\cG(T)\big).
\end{equation} 
where $\cH(T)$ is the projection of $T$ on the space of harmonic, $L$-valued forms of type $(p,q)$. 
We call this equality
\emph{Kodaira-de Rham decomposition of the current} $T$, by analogy with the untwisted case treated in \cite{KdR}. Together with the 
properties of the current $\cG(T)$ and an important lemma from \cite{LRW}, this is a key technique in the proof of Theorem \ref{ddbar},
and interesting in its own right. 

\medskip

\noindent{\bf Acknowledgments.} During part of the preparation of this article we have enjoyed the hospitality and excellent working conditions offered by the \emph{Freiburg Institut for Advanced Study}. At FRIAS we had the opportunity to meet many visitors, but the math exchanges we have had with S.~Kebekus and C. Schnell have been exceptionally fruitful. We thank A. Höring, J. Lott, R. Mazzeo and Y.~Rubinstein for valuable discussions. JC thanks the Institut Universitaire de France and the A.N.R JCJC project Karmapolis (ANR-21-CE40-0010) for providing excellent working condition. Finally, our work was completed during MP's visit at \emph{Center for Complex Geometry} (in Daejeon, South Korea). Many thanks to J.-M. Hwang for the invitation and vibrant working atmosphere in this institute!

\medskip

\noindent This paper is organised as follows.
\tableofcontents


\section{A few results from $L^2$ Hodge theory}\label{hodge}

\subsection{Hodge decomposition for metrics with Poincar\'e singularities}
Let $X$ be a $n$-dimensional compact Kähler manifold, and let $(L, h_L)$ be a  
line bundle endowed with a (singular) metric $h_L= e^{-\varphi_L}$. The terminology in this section is as follows.
\begin{enumerate}
	\smallskip
	
	\item[(a)] We say that the metric $h_L$ has log poles, if its local weights can be written as follows
	\begin{equation}\nonumber
		\varphi_L\equiv \sum a_i\log |f_i|^2
	\end{equation}
	modulo a smooth function, where $a_i$ are rational numbers and $f_i$ are holomorphic functions.
	\smallskip
	
	\item[(b)] If the metric $h_L$ has log poles, then we will consistently write  
	$$\displaystyle i\Theta_{h_L}(L)= \sum a_i [Y_i]+ \theta_L$$ 
	where $\theta_L$ is a smooth form.
\end{enumerate}

\noindent We consider a modification $\pi: \wh X\to X$ of $X$ such that the support of the singularities of 
$\varphi_L\circ \pi$ is a simple normal crossing divisor $E$. As usual, we can construct $\pi$ such that its restriction to $\wh X\setminus E$ is a biholomorphism . We write 
\begin{equation}\label{eq30}
	\varphi_L\circ \pi|_{\Omega}\equiv \sum_{\alpha=1}^p e_\alpha\log |z_\alpha|^2
\end{equation}
modulo a smooth function. Here $\Omega\subset \wh X$ is a coordinate subset, and 
$(z_\alpha)_{\alpha=1,\dots, n}$ are coordinates such that
$E\cap \Omega= (z_1\dots z_p= 0)$.

\noindent Let $\wh \omega_E$ be a complete K\"ahler metric on $\wh X\setminus E$,
with Poincar\'e singularities along $E$, and let 
\begin{equation}\label{eq31}
	\omega_E:= \pi_\star(\wh \omega_E)
\end{equation}
be the direct image metric. We note that in this way $(X_0, \omega_E)$ becomes a 
complete K\"ahler manifold, where $X_0:= X\setminus (h_L= \infty)$. 
\smallskip

\noindent The following statement is well-known, but we recall the precise version we need here.
We denote by $s$ the section of a line bundle for which the support of its zero divisor coincides with the support of $(h_L= \infty)$.
\begin{lemme}\label{cutoff}
	There exist a family of smooth functions $(\mu_\ep)_{\ep> 0}$ with the following properties.
	\begin{enumerate}
		
		\item[\rm (a)] For each $\ep> 0$, the function $\mu_\ep$ has compact support in $X_0$, and 
		$0\leq \mu_\ep\leq 1$.
		
		\item[\rm (b)] The sets $(\mu_\ep= 1)$ are providing an exhaustion of $X_0$.
		
		\item[\rm (c)] There exists a positive constant $C> 0$ such that 
		we have $$\displaystyle \sup_{X_0}\left(|\partial \mu_\ep|_{\omega_E}^2+ |\ddbar \mu_\ep|^2_{\omega_E}\right)\leq C.$$
	\end{enumerate} 
\end{lemme}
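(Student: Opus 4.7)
My plan is to construct $\mu_\ep$ globally as $\rho(\log\tau-N_\ep)$, where $\rho:\R\to[0,1]$ is a fixed smooth cutoff, $\tau$ is a single positive potential on $\wh X\sm E$ blowing up along $E$, and $N_\ep\to+\infty$. The heuristic is that $\log\tau$ behaves like the Poincar\'e distance to $E$ (in the punctured disk $\{|w|<1\}$ the hyperbolic distance from the origin diverges like $\log\log|w|^{-2}$), so composing with a fixed cutoff of its argument automatically produces uniform bounds with respect to $\wh\om_E$. Using $\pi$ to identify $X_0$ with $\wh X\sm E$, I fix a smooth Hermitian metric $h_E$ on $\O(E)$, rescaled so that $|s_E|_{h_E}\le e^{-2}$, and set $\tau:=-\log|s_E|^2_{h_E}\ge 4$. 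In a chart with $E=(z_1\cdots z_p=0)$ one has
$$\tau=\sum_{\alpha=1}^p(-\log|z_\alpha|^2)+O(1),\qquad \wh\om_E\sim \sum_{\alpha=1}^p\frac{i\,dz_\alpha\wedge d\bar z_\alpha}{|z_\alpha|^2(\log|z_\alpha|^2)^2}+\text{smooth terms}.$$

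Next I would fix once and for all a smooth $\rho:\R\to[0,1]$ equal to $1$ on $(-\infty,0]$ and to $0$ on $[1,+\infty)$, and any sequence $N_\ep\to+\infty$ as $\ep\to 0$; then $\mu_\ep:=\rho(\log\tau-N_\ep)$ satisfies (a) because $\Supp\mu_\ep\subset\{\tau\le e^{N_\ep+1}\}$ is compact in $\wh X\sm E$, and (b) because $\{\mu_\ep=1\}\supset\{\tau\le e^{N_\ep}\}$ exhausts $\wh X\sm E$ as $\ep\to 0$.

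The substance of the lemma is the uniform bound in (c). A direct computation gives
$$\partial\mu_\ep=\rho'\cdot\frac{\partial\tau}{\tau},\qquad \ddbar\mu_\ep=(\rho''-\rho')\cdot\frac{\partial\tau\wedge\dbar\tau}{\tau^2}+\rho'\cdot\frac{\ddbar\tau}{\tau},$$
with $\rho',\rho''$ evaluated at $\log\tau-N_\ep$ and hence uniformly bounded in $\ep$. It suffices therefore to estimate $|\partial\tau/\tau|_{\wh\om_E}$ and $|\ddbar\tau|_{\wh\om_E}$ independently of $\ep$ on $\wh X\sm E$. Locally
$$\frac{\partial\tau}{\tau}=-\sum_{\alpha=1}^p\frac{\log|z_\alpha|^2}{\tau}\cdot\frac{dz_\alpha}{z_\alpha\log|z_\alpha|^2}+O(1/\tau),$$
where each coefficient obeys $|\log|z_\alpha|^2|/\tau\le 1$ by $\tau\ge -\log|z_\alpha|^2+O(1)$, while the forms $\frac{dz_\alpha}{z_\alpha\log|z_\alpha|^2}$ have unit Poincar\'e norm; the first quantity is therefore uniformly $O(1)$. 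As for $\ddbar\tau=i\Theta_{h_E}(\O(E))$, this is a globally smooth $(1,1)$-form on $\wh X$, and any such form has bounded $\wh\om_E$-norm because the norm of $i\,dz_\alpha\wedge d\bar z_\alpha$ is $\sim|z_\alpha|^2(\log|z_\alpha|^2)^2\to 0$ at $E$.

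The only delicate point is the behaviour at multiple intersections of $E$, where $\partial\tau$ is a sum of several singular terms $-dz_\alpha/z_\alpha$ and one must verify that dividing by $\tau$ redistributes the weights uniformly across all branches. This is exactly what the elementary bound $|\log|z_\alpha|^2|/\tau\le 1$ achieves, so I do not anticipate a serious obstacle beyond carefully setting up the correct local model.
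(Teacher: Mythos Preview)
Your construction is correct and is essentially the same as the paper's: both take $\mu_\ep$ to be a smooth cutoff composed with $\log\log\frac{1}{|s_E|^2_{h_E}}$ on $\wh X\setminus E$, the only cosmetic difference being that the paper lets the cutoff $\rho_\ep$ depend on $\ep$ through its support $[1,\ep^{-1}]$ while you fix $\rho$ and shift its argument by $N_\ep$. Your explicit computation of $\partial\mu_\ep$, $\ddbar\mu_\ep$ and the estimate $|\log|z_\alpha|^2|/\tau\le O(1)$ at the snc crossings in fact supplies the ``simple computation'' the paper omits.
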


\begin{proof}
	It is enough to obtain the corresponding statement on $\wh X$, so that the divisor $E$ is snc. Then we take
	\begin{equation}\label{cut1}
		\mu_\ep= \rho_\ep\Big(\log\log\frac{1}{|\wh s|^2}\Big)
	\end{equation}
	where $\rho_\ep$ is equal to 1 on the interval $[1, \ep^{-1}]$ and is equal to zero on
	$[1+ \ep^{-1}, \infty]$. Also, we denote by $|\wh s|$ the inverse image of the norm 
	of the section $s$ with respect to an arbitrary, smooth metric.
	Actually we can also impose the condition that 
	\begin{equation}\label{cut2}
		\max_{j= 1,\dots N}\sup_{\ep> 0}\sup_{\R_+}|\rho^{(j)}_\ep|< C_N<\infty,
	\end{equation}
	for any positive integer $N$.
	The properties (a)-(c) are then verified by a simple computation.
\end{proof}

\medskip

\noindent In this context we have the following statement, which is a slight 
generalization of the usual result in $L^2$ Hodge theory.

\begin{thm}\label{Hodge}
	Consider a line bundle $(L, h_L)\to X$ endowed
	with a metric $h_L$ such that the requirement {\rm (a)} above is satisfied and assume that $\theta_L \geq 0$ on $X$.  Let $(X_0, \omega_E)$ be the corresponding complete K\"ahler manifold 
	cf. \eqref{eq31}. Then the following assertions are true.
	\begin{enumerate}
		\item[\rm (i)] We have the Hodge decomposition for $(n, q)$ forms, i.e.
		$$
		L^2_{n, q}(X_0, L)= {\mathcal H}_{n,q}(X_0, L)\oplus
		\Imm \dbar\oplus \Imm \dbar^\star,$$
		where   ${\mathcal H}_{n,q}(X_0, L)$ is the space of $L^2$ $\Delta^{''}$-harmonic $(n,q)$-forms with respect to $(\omega_E, h_L)$.
		
		\item[\rm (ii)] Let $u$ be an $L$-valued $\dbar$-closed $L^2$-form of type $(n,q)$ on $X_0$, and
		assume that we
		have $u= D^\prime _{h_L} w$, where $w\in L^2_{n-1, q}(X_0, L)$. Then there exists $v\in L^2_{n, q-1}(X_0, L)$ such
		that $u= \dbar v$.
	\end{enumerate}
\end{thm}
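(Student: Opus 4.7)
My plan is to treat (i) and (ii) separately, using the cut-offs of Lemma \ref{cutoff} as the bridge from formal manipulations to genuine identities on the non-compact manifold $(X_0, \omega_E)$.

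For (i), I would first reduce to the snc situation by pulling back via the log resolution $\pi: \wh X \to X$ from \eqref{eq30}. The biholomorphism $\wh X\setminus E \simeq X_0$ turns the question into one on the complete K\"ahler manifold $(\wh X\setminus E, \wh\omega_E)$, where $h_L\circ\pi$ is smooth with curvature $\theta_L\circ\pi \ge 0$. Completeness of $\wh\omega_E$ ensures that $\dbar$ and $\dbar^\star$ have unique closed extensions with the Hilbert-space adjoint coinciding with the formal adjoint, whence an abstract orthogonal decomposition
\[ L^2_{n,q}(X_0, L) = \cH_{n,q}(X_0, L) \oplus \overline{\Imm \dbar} \oplus \overline{\Imm \dbar^\star}. \]
The substantive content of (i) is then the closedness of $\Imm \dbar$, which I would extract from the Bochner-Kodaira-Nakano identity
\[ \|\dbar \alpha\|^2 + \|\dbar^\star \alpha\|^2 = \|D'_{h_L}\alpha\|^2 + \|(D'_{h_L})^\star \alpha\|^2 + \int_{X_0} \langle [\theta_L, \Lambda_{\omega_E}]\alpha, \alpha\rangle\, dV, \]
valid on smooth compactly supported $(n, q)$-forms and extended via the $\mu_\ep$ to all $\alpha \in \Dom(\dbar)\cap\Dom(\dbar^\star)$, exploiting the non-negativity of the curvature term within the standard Andreotti-Vesentini/Demailly framework.

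For (ii), the Hodge decomposition from (i) combined with $\dbar u = 0$ gives $u = \cH(u) + \dbar v$ for some $v \in L^2_{n,q-1}(X_0, L)$ (the $\dbar^\star$-component is killed by $\dbar u = 0$ after a further cut-off integration by parts). It remains to show $\cH(u) = 0$. For any harmonic $\eta$, the Bochner-Kodaira-Nakano identity together with $\theta_L \ge 0$ forces $(D'_{h_L})^\star \eta = 0$. Plugging in $u = D'_{h_L} w$,
\[ \int_{X_0} \mu_\ep \langle D'_{h_L} w, \eta\rangle\, dV = \int_{X_0} \langle \mu_\ep w, (D'_{h_L})^\star \eta\rangle\, dV - \int_{X_0} \langle \partial\mu_\ep \wedge w, \eta\rangle\, dV; \]
the first term vanishes, while Cauchy-Schwarz combined with the uniform bound $|\partial\mu_\ep|_{\omega_E} \le C$ from Lemma \ref{cutoff} and the shrinking support of $\partial\mu_\ep$ (dominated convergence applied to the $L^2$ data $w, \eta$) drives the second to $0$ as $\ep \to 0$. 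Passing to the limit yields $\langle u, \eta\rangle = 0$ and hence $\cH(u) = 0$, so $u = \dbar v$ with $v \in L^2_{n, q-1}(X_0, L)$.

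The main obstacle is the closed-range property in (i): since $\theta_L$ is only semi-positive, one does not get a Gårding-type lower bound on $\Delta''$ for free, so extracting closedness of $\Imm \dbar$ from the Bochner-Kodaira-Nakano inequality is the delicate point and requires the standard but non-trivial arguments alluded to above. Once this is granted, (ii) reduces to the cut-off integration-by-parts computation, whose key structural input is precisely that harmonic $(n, q)$-forms in this setting are automatically $(D'_{h_L})^\star$-closed.
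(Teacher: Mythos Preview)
Your treatment of (ii) is fine and is exactly what the paper means by ``standard considerations'': once (i) is available, the $\dbar^\star$-component of $u$ vanishes by orthogonality, and the harmonic component vanishes because any $\eta\in\cH_{n,q}$ satisfies $(D'_{h_L})^\star\eta=0$ by Bochner, so $\langle D'_{h_L}w,\eta\rangle=0$ via the cut-off argument you describe.

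The gap is in (i). You correctly identify that the only issue is closedness of $\Imm\dbar$ and $\Imm\dbar^\star$, and you correctly note that semi-positivity of $\theta_L$ alone gives no coercive lower bound. But you then defer to ``the standard Andreotti--Vesentini/Demailly framework'', and this is precisely where the argument is missing: that framework gives you the Bochner--Kodaira--Nakano identity and hence the a priori inequality $\|\dbar\alpha\|^2+\|\dbar^\star\alpha\|^2\ge\|D'_{h_L}\alpha\|^2+\|(D'_{h_L})^\star\alpha\|^2$ for $(n,q)$-forms, but from this alone you cannot conclude closed range when $\theta_L\ge 0$ is merely semi-positive. What is needed is an honest Poincar\'e-type inequality coming from the geometry of $\omega_E$ at infinity, not from curvature.

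The paper supplies exactly this: Theorem~\ref{prop:2} establishes $\|u\|^2\le C\|\dbar u\|^2$ for $(p,0)$-forms orthogonal to holomorphic forms, with a constant depending on the Poincar\'e-type singularities of $\omega_E$ along $E$. The proof is a local computation (Lemma~\ref{fourier} and Proposition~\ref{poinc}, via Fourier expansion in the angular variables and weighted Hardy-type inequalities in the radial variable) followed by a global contradiction argument \`a la Auvray. This Poincar\'e inequality is then transported via the Hodge star to yield Corollaries~\ref{usecor} and~\ref{usecorII}, which give the closed-range estimates for $\dbar^\star$ and $\dbar$ respectively on $(n,q)$-forms. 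Bochner enters only at this transport step, to control $\dbar$ by $D'_{h_L}$ (or vice versa) in the contradiction argument. Without this Poincar\'e input your argument for (i) is incomplete.
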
  

\medskip

\noindent The proof of Theorem~\ref{Hodge}, which we give below, makes use of the
following statement which is the $\dbar$-version of the Poincar\'e inequality established in \cite{Auv17}. 

\begin{thm}\label{prop:2}
	Let $p\leq n$ be an integer.  There exists a positive constant $C> 0$ such
	that the following inequality holds
	\begin{equation}\label{eq32}
		\int_{X_0}|u|^2_{\omega_E}e^{-\varphi_L}dV_{\omega_E}\leq C \int_{X_0}|\dbar u|^2_{\omega_E}e^{-\varphi_L}dV_{\omega_E}
	\end{equation}
	for any $L$-valued form $u$ of type $(p, 0)$ which belongs to the domain of $\dbar$ and which is orthogonal to the
	space of $L^2$ holomorphic $(p,0)$--forms. Note that here $\theta_L$ is arbitrary.
\end{thm}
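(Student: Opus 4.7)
The plan is a standard spectral-gap argument by contradiction, with the key compactness input at the Poincaré cusps supplied by the scalar inequality of \cite{Auv17}. Note first that on $L$-valued $(p,0)$-forms one automatically has $\dbar^{\star}u=0$, so the stated inequality is equivalent to a spectral gap for $\Delta''=\dbar^{\star}\dbar$ away from its kernel, namely the space $\mathcal{H}_{p,0}(X_0,L)$ of $L^2$ holomorphic $(p,0)$-forms. In other words, the goal is to show that $\dbar$, acting on $(p,0)$-forms, has closed range in $L^2_{p,1}(X_0,L)$.

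Suppose the inequality fails. Then there exists a sequence $(u_k)\subset \Dom(\dbar)$ of $L$-valued $(p,0)$-forms, each orthogonal to $\mathcal{H}_{p,0}(X_0,L)$, with $\|u_k\|_{L^2}=1$ and $\|\dbar u_k\|_{L^2}\to 0$. After extracting a weak limit $u_\infty\in L^2_{p,0}(X_0,L)$, one checks that $\dbar u_\infty=0$ in the sense of distributions, so $u_\infty$ is $L^2$ and holomorphic on $X_0$; since it is orthogonal to $\mathcal{H}_{p,0}(X_0,L)$, we must have $u_\infty=0$. The contradiction will be obtained by upgrading the weak convergence to strong convergence and deducing $\|u_k\|\to 0$.

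To upgrade, decompose $u_k=\mu_\ep u_k+(1-\mu_\ep)u_k$ using the cutoffs from Lemma \ref{cutoff}. On the set $\{\mu_\ep=1\}$, which is precompact in $X_0$, both $\omega_E$ and $e^{-\varphi_L}$ are uniformly bounded, so Gårding's inequality for $\dbar+\dbar^{\star}$ together with Rellich compactness extracts a strongly $L^2$-convergent subsequence of $\mu_\ep u_k$ whose limit is necessarily $\mu_\ep u_\infty=0$. For the tail $(1-\mu_\ep)u_k$, supported near $(h_L=\infty)$, the strategy is to pull back via $\pi$ to $\wh X$, work in local coordinates in which $E=(z_1\cdots z_p=0)$ and $\varphi_L\circ\pi\equiv \sum e_\alpha\log|z_\alpha|^2$, and apply the scalar Poincaré inequality of \cite{Auv17} coefficient-wise in a local holomorphic frame of $K_{\wh X}\otimes\pi^{\star}L$. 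This should yield an estimate of the form
$$
\int_{X_0}|(1-\mu_\ep)u_k|^2_{\omega_E}\,e^{-\varphi_L}\,dV_{\omega_E}\;\leq\; C\int_{X_0}|\dbar u_k|^2_{\omega_E}\,e^{-\varphi_L}\,dV_{\omega_E}\;+\;\eta(\ep),
$$
where the error $\eta(\ep)\to 0$ as $\ep\to 0$, uniformly in $k$, comes from the commutator $[\dbar,\mu_\ep]$ and is controlled by the uniform bound $|\partial\mu_\ep|^2_{\omega_E}\le C$ of Lemma \ref{cutoff}(c). Letting $k\to\infty$ and then $\ep\to 0$ gives $\|u_k\|_{L^2}\to 0$, contradicting $\|u_k\|=1$.

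The main obstacle is the tail estimate near the cusps. Auvray's inequality is stated for scalar functions on a single Poincaré cusp, while here one must simultaneously (i) absorb the twist by the singular weight $e^{-\varphi_L}$, whose poles need not be supported on $E$; (ii) pass from scalar functions to $(p,0)$-form coefficients, which requires fixing a holomorphic frame and reducing $|\dbar u|^2$ to a sum of scalar $\dbar$-gradients; and (iii) handle arbitrary snc crossings, where $\omega_E$ is only quasi-isometric to a product of one-dimensional Poincaré metrics. Achieving a constant $C$ independent of $k$ in the displayed estimate, despite these three complications, is the delicate point of the argument.
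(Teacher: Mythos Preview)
Your overall architecture---contradiction, weak limit zero, interior/cusp decomposition---is the same as the paper's. But the proposal is a sketch, not a proof: you correctly locate the crux (the weighted local Poincar\'e inequality near the cusps for $(p,0)$-forms) and then stop short of establishing it. This is precisely the content the paper supplies as Proposition~\ref{poinc}, and it is \emph{not} obtained by applying Auvray's scalar result coefficient-wise. The weight $e^{-\varphi_L}$ introduces extra factors $|z_i|^{-2\delta_i}$ in the local model, and the $(p,0)$-type changes which coordinates carry Poincar\'e factors versus flat ones; the paper handles both by proving a new one-variable inequality (Lemma~\ref{fourier}) with an arbitrary real parameter~$\delta$, via a Fourier-series computation, and then assembling it index by index as in \eqref{d222}--\eqref{d3}. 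Your complication~(i) is actually a non-issue after the pullback by $\pi$ you already invoke (this reduction is the first paragraph of the paper's proof), but complications~(ii) and~(iii) are real and require this new lemma, not a black-box appeal to~\cite{Auv17}.

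There is also a small logical slip in your tail estimate. The commutator term is bounded by $\sup|\partial\mu_\ep|_{\omega_E}^2\cdot\|u_k\|^2_{L^2(\operatorname{supp}\partial\mu_\ep)}$, and Lemma~\ref{cutoff}(c) only gives that the sup is \emph{bounded}, not small. So $\eta(\ep)$ does not tend to zero uniformly in $k$ from the gradient bound alone; what makes it go to zero is that $\operatorname{supp}\partial\mu_\ep$ is compact in $X_0$ and $u_k\to 0$ strongly there (your interior Rellich step). In the paper this is handled with a \emph{fixed} cutoff $\chi$ equal to $1$ near the divisor: Proposition~\ref{poinc} controls $\|\chi u_j\|$ by $\|\dbar(\chi u_j)\|$, whose commutator part lives on the fixed compact $\operatorname{supp}d\chi$ and dies as $j\to\infty$ by~\eqref{eq611}. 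The order of limits you write (``$k\to\infty$ then $\ep\to 0$'') is salvageable with this correction, but as stated the justification is wrong.
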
   
\begin{proof}[Proof of Theorem~\ref{prop:2}]
	The first observation is that we can assume from the very beginning that the polar set
	$h_L= \infty$ has snc support. Indeed, via the map $\pi$ the hypothesis and the conclusion of 
	our Poincar\'e inequality transform as follows.
	\smallskip
	
	\noindent $\bullet$ The form 
	\begin{equation}
		\wh u= \pi^\star u
	\end{equation}
	on $\wh X$ is $\pi^\star L$-valued and $L^2$ with respect to $\wh \omega_E$ and $\pi^\star h_L$.
	\smallskip
	
	\noindent $\bullet$ The form $\wh u$ is orthogonal on the space of $L^2$--harmonic $(p,0)$ forms with values in $\pi^\star L$. Indeed, given any $\pi^\star L$-valued form of 
	$(p, 0)$-type $\wh \gamma$ on $\wh X\setminus E$ there exists a form $\gamma$ on 
	$X\setminus Z$ such that 
	\begin{equation}
		\wh \gamma= p^\star \gamma
	\end{equation} 
	because $p:\wh X\setminus E\to X\setminus Z$ is a biholomorphism. Also, $p$ establish an 
	isometry between $(\wh X\setminus E, \wh \omega_E)$
	and $(X\setminus Z, \omega_E)$ respectively. The same is true for the pairs
	$(p^\star L, p^\star h_L)$ and $(L, h_L)$, so our claim is clear.
	\medskip
	
	\noindent This means that it is enough to assume that $\wh X= X$, i.e. the metric $\omega_E$ has 
	Poincar\'e singularities along an snc divisor $E$ which coincides with the support of the analytic set $h_L= \infty$.
	\smallskip
	
	\noindent The following statement represents an important step towards the proof of Theorem \ref{prop:2}.
	
	\begin{proposition}\label{poinc} Let $\Omega$ be some co-ordinate open set of $X$ and
let $\tau$ be a $L$-valued $(p, 0)$-form with compact support 
		in $\Omega$. Then we have 
		\begin{equation}\label{eq57}
		\int_\Omega|\tau|^2_{\omega_E} e^{-\varphi_L}dV_{\omega_E}\leq C\int_\Omega|\dbar\tau|^2_{\omega_E}e^{-\varphi_L}dV_{\omega_E}
		\end{equation}
		where $C$ is a   constant.
	\end{proposition}
	\medskip
	
	\noindent The arguments in \cite{Auv17}, Lemma 1.10 correspond to the case $p=0$ and $\varphi_L=0$. For the general case we need the
	following auxiliary statement.
	
	\begin{lemme}\label{fourier} Let $\delta$ be an arbitrary real number. Then there exists $x_\delta< 1$ such that the following holds.
		For any smooth function $f$ defined on the closure of the unit disk $\mathbb D$ in $\mathbb C$, such that 
		there exists $\ep> 0$ with the property that $f(z)= 0$ on the set $\{ |z| \leq \ep \} \cup  \{|z|\geq 1- x_\delta \}$, we have 
		\begin{equation}\label{d1}\int_{\mathbb D}|f|^2 \frac{d\lambda(z)}{|z|^{2\delta}} \leq C \int_{\mathbb D}\left| \frac{\partial f }{\partial \ol z}\right|^2 |z|^{2- 2\delta} \log^2 |z| d\lambda(z)
		\end{equation}
		as well as
		\begin{equation}\label{d2}
			\int_{\mathbb D}|f|^2 \frac{d\lambda(z)}{|z|^{2+ 2\delta}\log^2 |z|} \leq C \int_{\mathbb D}\left| \frac{\partial f }{\partial \ol z}\right|^2 \frac{d\lambda(z)}{|z|^{2\delta}}.
		\end{equation}
	\end{lemme}

	\begin{proof}[Proof of Lemma  \ref{fourier}] Let $x_\delta< 1$ be a positive real number whose expression will be determined in what follows. Let $v:[0,1]\to \mathbb C$ be a smooth function whose support is contained in the interval $[\ep, x_\delta]$ for some positive, small enough $\ep$. 
		
		\noindent We claim that the following inequalities hold
		\begin{equation}\label{cc1}
			\int_0^1 |v|^2 t^kdt \leq C \int_0^1 |v' |^2 t^{k+2}\log^2t dt
		\end{equation}
		as well as
		\begin{equation}\label{c1bis}
			\int_0^1 |v|^2 t^k\frac{dt}{\log^2t} \leq C \int_0^1 |v' |^2 t^{k+2}dt
		\end{equation}
		where $k$ is any element of the set $\delta+ \mathbb Z$, and $C$ is a numerical constant (in particular independent of $v, \ep, k, ...$).
		
		\noindent We first check that \eqref{cc1} holds true: we have 
		\begin{equation}\label{e1}
			-\int_0^1 |v|^2 t^{k+1}\log^2 d\left(\frac{1}{\log t}\right)= \int_0^1\frac{1}{\log t} \frac{d}{dt}\left(v^2 t^{k+1}\log^2t\right)dt  
		\end{equation}
		and this equals
		\begin{equation}\label{e2}
			2\int_0^1(\ol v v'+ v\ol{v'}) t^{k+1}\log tdt + \int_0^1\frac{t^kv^2}{\log t}\left((k+1)\log^2t+ 2\log t\right)dt.
		\end{equation}
		Now two things can happen: either $k+1=0$, and then the second term in \eqref{e2} is equal to twice the quantity we are interested in, or $k+1\neq 0$. Note that in the latter case it is possible to fix $x_\delta$ uniformly with respect to $k$ so that 
		\begin{equation}\label{e201}|(k+1)\log^2t+ 2\log t | \geq 2 |\log t|\end{equation}
		for any $t\in [0, x_\delta]$ precisely because $k\in \delta+ \mathbb Z$.
		
		\noindent As a consequence, we have 
		\begin{equation}\label{e200}
			\left|\int_0^1 (\ol v v'+ v\ol{v'}) t^{k+1}\log tdt\right| \geq  \int_0^1 t^kv^2 dt.
		\end{equation}
		for any $v$ of compact support in $]0, x_\delta[$. 
		
		\noindent Moreover, we have 
		\begin{equation}\label{e3}
			\left|\int_0^1(\ol v v'+ v\ol{v'}) t^{k+1}\log tdt\right|^2 \leq \int_0^1|v'|^2 t^{k+2}\log^2tdt \int_0^1|v|^2 t^{k}dt.
		\end{equation}
		by Cauchy-Schwarz, so the inequality \eqref{cc1} is settled. 
		\medskip
		
		\noindent The arguments for \eqref{c1bis} are completely similar: we have 
		\begin{equation}\label{c11}
			\int_0^1 |v|^2 t^k\frac{dt}{\log^2t}=  \int_0^1 |v|^2 \frac{t^{k+1}}{\log^2 t}d\log t
		\end{equation}
		which equals 
		\begin{equation}\label{c12}
			-\int_0^1 {\log t}\frac{d}{dt}\left(|v|^2 \frac{t^{k+1}}{\log^2 t}\right)dt= \int_0^1 
			\frac{|v|^2t^{k}}{\log^2t}\left(2+ (k+1)\log \frac{1}{t}\right) dt -
			\int_0^1\frac{t^{k+1}}{\log t}(v'\ol v+ v\ol {v'})dt.
		\end{equation}
		and two things can happen. If $k+1\geq 0$, then we have $\displaystyle 2+ (k+1)\log \frac{1}{t}\geq 2$ for any $t\in [0,1]$ and \eqref{c1bis} follows.
		For each element $k\in \delta+ \mathbb Z$ such that $k+1< 0$ we can assume that 
		\begin{equation}\label{e2011}(k+1)\log \frac{1}{t}+ 2 \leq \frac{1}{2}\end{equation}
		for any $t\in [0, x_\delta]$ by imposing a supplementary (but uniform) condition on $x_\delta$ if necessary, and we proceed exactly as we did for \eqref{cc1}.
		
		\smallskip
		Therefore, we obtain \eqref{cc1} and \eqref{c1bis} for any 
		$k\in \delta+ \Z$.  
		\medskip
		
		\noindent  Lemma \ref{fourier} follows from this: we consider the Fourier series 
		\begin{equation}\label{c5}
			f=\sum_{k\in \mathbb Z}a_k(t)e^{\sqrt{-1}k\theta}
		\end{equation}
		of $f$, and then we have 
		\begin{equation}\label{c6}
			\frac{\partial v }{\dbar z}= \sum_{k\in \mathbb Z}\left(a_k'(t)- \frac{k}{t}a_k(t)\right)e^{\sqrt{-1}k\theta}.
		\end{equation}
		The identity
		\begin{equation}\label{c7}
			t^k\frac{d}{dt}\left(\frac{a_k}{t^k}\right)= a_k'(t)- \frac{k}{t}a_k(t)
		\end{equation}
		reduces the proof of our statement to inequalities of type \eqref{cc1} and \eqref{c1bis} which are already established. In conclusion, Lemma \ref{fourier} is completely established.
	\end{proof}
	\medskip
	
	\noindent We now return to the proof of Proposition \ref{poinc}. 
	
	\begin{proof}[Proof of Proposition \ref{poinc}]
		We can suppose that the intersection $\Omega \cap E$ is of type $z_1\cdots z_r= 0$. 
		
		We first consider the case when $\tau$ is of compact support in $\Omega \setminus E$.  
		Then $\tau$ can be written as sum of forms of type
		\begin{equation}\label{d111}
			\tau_I:= f_Idz_I .
		\end{equation}
After changing the orders of the index,	there are two cases:  $I\cap \{1,\dots, r\}= \{1,\dots, p\}$ for some $p$ or $I\cap \{1,\dots, r\}= \emptyset.$
		
		\noindent In the first case we have
		\begin{equation}\label{d222}
			\frac{|\tau_I|_g^2e^{-\varphi_L}dV_g}{d\lambda}= \frac{|f_I|^2e^{-\varphi_L}}{\prod_{\alpha=p+1}^r|z_\alpha|^2\log^2|z_\alpha|^2}
		\end{equation}
		where $\displaystyle e^{-\varphi_L}= \frac{1}{\prod_{i=1}^r|z_i|^{2\delta_i}}$
		and in the second case this is the same with $p=0$.  
		In \eqref{d222} we denote by $g$ the model Poincaré metric
		$$\sum_{i=1}^r\frac{\sqrt{-1}dz_i\wedge d\ol z_i}{|z_i|^2\log^2|z_i|^2}+ \sum_{i=r+1}^n{\sqrt{-1}dz_i\wedge d\ol z_i}.$$
		\smallskip
		
		\noindent Now for the $\dbar \tau_I$ we have
		\begin{align}
			\frac{|\dbar \tau_I|_g^2e^{-\varphi_L}dV_g}{d\lambda}= & \sum_{i=1}^r\left|\frac{\partial f_I}{\partial \ol z_i}\right|^2\frac{|z_i|^2\log^2|z_i|^2e^{-\varphi_L}}{\prod_{\alpha=p+1}^r|z_\alpha|^2\log^2|z_\alpha|^2}\nonumber \\ + & \label{d3} \sum_{i=r+1}^n\left|\frac{\partial f_I}{\partial \ol z_i}\right|^2\frac{e^{-\varphi_L}}{\prod_{\alpha=p+1}^r|z_\alpha|^2\log^2|z_\alpha|^2} \\
			\nonumber 
		\end{align}
		and the observation is that for every $\xi\in \Omega$ there exist some index $j\in \{1,\dots, r\}$ for which
		the support condition we require in Lemma \ref{fourier} is satisfied for 
		\begin{equation}\label{d4}
			f(z):= f_I(\xi_1,\dots \xi_{j-1}, z, \xi_{j+1},\dots \xi_n).
		\end{equation}
		We use \eqref{cc1} or \eqref{c1bis} according to the case where $j\geq p$ in \eqref{d3} or not.
		\medskip

	In general, if $\tau$ is of compact support in $\Omega$, we use the cut-off function $\mu_\ep$ in Lemma \ref{cutoff}. We apply the above estimation for $\mu_\ep \tau$. Note that the constant $C$ in Lemma \ref{fourier} is uniform with respect to $\ep$.
	Proposition \ref{poinc} is therefore completely proved by letting $\ep \to 0$.
	\end{proof}
	\bigskip
	
	\noindent Now we finish the proof of Theorem \ref{prop:2}, which  follows the arguments in \cite[Lemma 1.10]{Auv17}:  if a positive constant as in \eqref{eq32} does not exists, then we obtain a sequence $u_j$ of 
	$L$-valued forms of type $(p,0)$ orthogonal to the space of holomorphic forms such that 
	\begin{equation}\label{eq61}
		\int_X|u_i|^2_{\omega_E}e^{-\varphi_L}dV_{\omega_E}= 1, \qquad
		\lim_i\int_X|\dbar u_i|^2_{\omega_E}e^{-\varphi_L}dV_{\omega_E}= 0
	\end{equation}
	It follows that the weak limit $u_\infty$ of $(u_i)$ is holomorphic. On the other hand, each $u_i$ 
	is perpendicular to the space of holomorphic forms, so it follows
	that $u_\infty$ is equal to zero. Moreover, by Bochner formula combined with Proposition \ref{poinc} we can assume that $u_i$ 
	converges weakly in the Sobolev space $W^1$, so strongly in $L^2$ to zero. In particular we have 
	\begin{equation}\label{eq611}
		u_i |_K\to 0
	\end{equation}
	in $L^2$ for any compact subset $K\subset X\setminus E$.
	
	The last step in the proof is to notice that the considerations above contradict the fact that the $L^2$ norm of each $u_i$ is equal to one. This is not quite immediate, but is precisely as the end of the proof of Lemma 1.10 in
	\cite{Auv17}, so we will not reproduce it here. The idea is however very clear: 
	we decompose each $u_j= \chi u_j+ (1-\chi)u_j$ where $\chi$ is a cutoff function which is equal to 1 in $U$ and whose norm of the corresponding gradient is small. Then the $L^2$ norm of $\chi u_j$ is small
	by \eqref{eq57} and \eqref{eq61}. The $L^2$ norm of $(1-\chi)u_j$ is equally small 
	by \eqref{eq611}, and this is how we reach a contradiction.
\end{proof}
\medskip

\noindent We have the following consequences of Proposition \ref{poinc}. 

\begin{cor}\label{usecor} In the setting of Theorem \ref{Hodge}, we can find a positive constant $C> 0$ such that the following holds true.
	Let $v$ be a $L$-valued form of type $(n,q)$ for some $q\geq 1$. We assume that $v$ is $L^2$, and orthogonal 
	to the kernel of the operator $\dbar^\star$. Then we have
	\begin{equation}
		\int_{X_0}|v|^2_{\omega_E}e^{-\varphi_L}dV_{\omega_E}\leq C \int_{X_0}|\dbar^\star v|^2_{\omega_E}e^{-\varphi_L}dV_{\omega_E}.
	\end{equation}
\end{cor}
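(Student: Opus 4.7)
My plan is to deduce the inequality from the Hodge decomposition of Theorem \ref{Hodge}(i) by a purely functional-analytic argument, requiring no further geometric input.

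First I would identify the orthogonal complement of $\ker \dbar^\star$ in $L^2_{n,q}(X_0, L)$. Since $\dbar^\star$ annihilates harmonic forms and squares to zero, $\mathcal{H}_{n,q}(X_0,L) \oplus \Imm \dbar^\star \subseteq \ker \dbar^\star$. Conversely, if $v = \dbar u \in \Imm \dbar$ satisfies $\dbar^\star v = 0$, pairing with $u$ gives $\|\dbar u\|^2 = \langle u, \dbar^\star \dbar u \rangle = 0$, so $(\Imm \dbar) \cap \ker \dbar^\star = \{0\}$. Combined with the three-term orthogonal decomposition of Theorem \ref{Hodge}(i), these observations yield $(\ker \dbar^\star)^\perp = \Imm \dbar$. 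Under our hypothesis this means there exists $u \in \Dom(\dbar) \cap L^2_{n, q-1}(X_0, L)$ with $\dbar u = v$, and I may further choose $u$ orthogonal to $\ker \dbar$.

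Second, since $\Imm \dbar$ is closed in $L^2_{n,q}$ (again by Theorem \ref{Hodge}(i)), the bounded inverse theorem applied to the continuous bijection
\[
\dbar:\bigl((\ker\dbar)^\perp\cap\Dom(\dbar),\,\|\cdot\|_{\rm graph}\bigr)\longrightarrow\bigl(\Imm\dbar,\,\|\cdot\|_{L^2}\bigr)
\]
produces a constant $C>0$ such that $\|u\| \leq C\|\dbar u\|$ for all $u \in (\ker \dbar)^\perp \cap \Dom(\dbar)$. For our particular $u$ this reads $\|u\| \leq C \|v\|$. A Cauchy-Schwarz step then concludes the argument:
\[
\|v\|^2 = \langle \dbar u, v\rangle = \langle u, \dbar^\star v\rangle \leq \|u\|\cdot\|\dbar^\star v\| \leq C\|v\|\cdot\|\dbar^\star v\|,
\]
and dividing by $\|v\|$ delivers the desired inequality (up to renaming $C$).

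The main technical difficulty in the whole argument has already been overcome: establishing the Hodge decomposition in this singular/complete setting (Theorem \ref{Hodge}), which itself rests on the Poincar\'e inequality of Proposition \ref{poinc}. Once that input is on hand, the present corollary is formal; in particular the inequality is genuinely a dual manifestation of Theorem \ref{prop:2} transported from $(p,0)$- to $(n,q)$-forms through the closed-range structure provided by Hodge theory.
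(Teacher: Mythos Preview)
Your functional-analytic argument is internally correct, but it is \emph{circular} within the logical structure of this paper. You invoke Theorem~\ref{Hodge}(i) to know that $\Imm\dbar$ is a closed subspace; however, if you look at the proof of Theorem~\ref{Hodge} given later in the section, the closedness of $\Imm\dbar^\star$ is established there \emph{by citing Corollary~\ref{usecor}}, and the closedness of $\Imm\dbar$ by citing Corollary~\ref{usecorII}. So Corollary~\ref{usecor} is logically prior to Theorem~\ref{Hodge}(i), not a consequence of it, and your argument assumes what is ultimately to be proved.

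The paper's own proof avoids this loop by reducing directly to the Poincar\'e-type inequality of Theorem~\ref{prop:2}, which is already available. One applies the Hodge star to pass from the $(n,q)$-form $v$ to the $(n-q,0)$-form $u=\star v$; the hypothesis $v\perp\Ker\dbar^\star$ translates into $u\perp\Ker D'_{h_L}$, and the desired estimate becomes $\|u\|^2\le C\|D'_{h_L}u\|^2$. This is shown by contradiction: a violating sequence $(u_i)$ with $\|u_i\|=1$ and $\|D'_{h_L}u_i\|\to 0$ has weak limit zero. The Bochner identity, together with the curvature sign $\theta_L\ge 0$ (which makes the curvature term nonpositive on $(p,0)$-forms), forces $\|\dbar u_i\|\to 0$ as well. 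One is then exactly in the endgame of the proof of Theorem~\ref{prop:2}, and the compactness argument based on Proposition~\ref{poinc} yields the contradiction. Note in particular that the hypothesis $\theta_L\ge 0$ is genuinely used here via Bochner, whereas your argument never touches it---another sign that the full Hodge decomposition (which itself needs the curvature sign) is doing the work that should be done at this stage.
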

\smallskip

\begin{proof} To start with, we consider  
	the Hodge star $u:= \star v$ (of type $(n-q, 0)$ and values in $L$) is orthogonal to the space $\Ker(D' _{h_L})$. This can be seen as follows:
	let $\phi$ be a $L^2$-form of type $(n-q, 0)$ such that $D'_{h_L} \phi= 0$. Then we have
	$$\int_{X_0} \langle u, \phi \rangle e^{-\varphi_L} d V_{\omega_E} = \int_{X_0} \langle v, \star \phi \rangle e^{-\varphi_L} d V_{\omega_E} =0.$$
	since $\displaystyle \dbar^\star(\star \phi)= -\star D'_{h_L} (\phi)= 0$.
	
	\noindent We therefore have to show that there exists some constant $C> 0$ such that the inequality
	\begin{equation}\label{eq63}
		\int_{X_0}|u|^2_{\omega_E}e^{-\varphi_L}dV_{\omega_E}\leq 
		C\int_{X_0}|D'_{h_L} u|^2_{\omega_E}e^{-\varphi_L}dV_{\omega_E}.
	\end{equation}
	holds for any $L$-valued form $u$ of type $(n-q, 0)$ orthogonal to the kernel of $D' _{h_L}$. 
	
	\medskip
	
	As before, this is done contradiction: if we cannot find a constant $C$ as in 
	\eqref{eq63}, then we get a sequence of forms $u_i$ such that 
	\begin{equation}\label{van20}
		\|u_i\|^2= 1, \qquad \lim_{i\to\infty}\|D'_{h_L} (u_i)\|^2= 0,
	\end{equation}
	where the norms in \eqref{van20} are precisely the ones in \eqref{eq63}. It follows that any weak limit $u_\infty$ of the sequence $(u_i)$ must be zero, since it belongs to both $\Ker(D'_{h_L} )$ and to its orthogonal. Therefore we get 
	\begin{equation}\label{van21}
		u_i |_K\to 0
	\end{equation}
	in $L^2$ for any compact subset $K\subset X\setminus E$. 
	\smallskip
	
	\noindent On the other hand, for each index $i$ we have $D'^{\star} (u_i)= 0$. So Bochner formula reads
	\begin{equation}\label{van22}
		\int_{X_0}|\dbar u_i|^2_{\omega_E}e^{-\varphi_L}dV_{\omega_E}= \int_{X_0}|D' _{h_L} u_i|^2_{\omega_E}e^{-\varphi_L}dV_{\omega_E}+ 
		\int_{X_0}\langle [\theta_L, \Lambda_{\omega_E}]u_i, u_i\rangle e^{-\varphi_L}dV_{\omega_E}
	\end{equation}
	and since -by the assumption $\theta_L\geq 0$, we have 
	\begin{equation}\label{van23}
		\int_{X_0}\langle [\theta_L, \Lambda_{\omega_E}]u_i, u_i\rangle e^{-\varphi_L}dV_{\omega_E}\leq 0
	\end{equation}
	because of the type of $u_i$.  
	\smallskip
	
	\noindent It follows that $\displaystyle \lim_i \|\dbar u_i\|^2= 0$ and we obtain a contradiction exactly as in end of the proof of Theorem \ref{prop:2}.
\end{proof}
\medskip

\noindent The following is the "adjoint" version of the preceding corollary.  

\begin{cor}\label{usecorII} In the setting of Theorem \ref{Hodge}, there exists a positive constant $C> 0$ such that the following holds true.
	Let $v$ be a $L$-valued form of type $(n,q)$ for some $q\geq 1$. We assume that $v$ is $L^2$, and orthogonal 
	to the kernel of the operator $\dbar$. Then we have
	\begin{equation}\label{eq62}
		\int_{X_0}|v|^2_{\omega_E}e^{-\varphi_L}dV_{\omega_E}\leq C \int_{X_0}|\dbar v|^2_{\omega_E}e^{-\varphi_L}dV_{\omega_E}.
	\end{equation}
\end{cor}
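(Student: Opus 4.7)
My approach is to derive \eqref{eq62} directly from Corollary \ref{usecor} by duality, without repeating a Bochner-type contradiction argument: the two statements are the two adjoint faces of the closed range of $\dbar$, and once Theorem \ref{Hodge} has been established, one can pass from one to the other by a single Hilbert-space adjoint computation.

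Since $v$ is orthogonal to $\Ker(\dbar)$, the Hodge decomposition of Theorem \ref{Hodge}\,(i) places $v$ in the closed subspace $\Imm(\dbar^\star)$, so there exists an $L$-valued $(n,q+1)$-form $w\in \Dom(\dbar^\star)$ with $v=\dbar^\star w$. Replacing $w$ by $w$ minus its $L^2$-projection onto $\Ker(\dbar^\star)$, we may further assume $w\perp \Ker(\dbar^\star)$ while still having $v=\dbar^\star w$. Corollary \ref{usecor} applied to this $w$ (which is $L^2$, of type $(n,q+1)$ with $q+1\ge 1$, and orthogonal to $\Ker(\dbar^\star)$) then gives
\[
\|w\|^2\leq C\,\|\dbar^\star w\|^2 = C\,\|v\|^2.
\]
Using the adjoint property of $\dbar^\star$ (legitimate since $v\in\Dom(\dbar)$ by hypothesis and $w\in\Dom(\dbar^\star)$ by construction) I compute
\[
\|v\|^2 = \langle \dbar^\star w, v\rangle = \langle w,\dbar v\rangle \leq \|w\|\,\|\dbar v\|\leq \sqrt{C}\,\|v\|\,\|\dbar v\|,
\]
and dividing by $\|v\|$ (which is harmless when $v=0$) yields $\|v\|\leq \sqrt{C}\,\|\dbar v\|$, i.e. \eqref{eq62} with $C$ replaced by its square.

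The only delicate point will be the existence of a potential $w\in\Dom(\dbar^\star)$ realising $v=\dbar^\star w$: this is the closedness of $\Imm(\dbar^\star)\subset L^2_{n,q}(X_0,L)$, which is built into the orthogonal direct sum statement of Theorem \ref{Hodge}\,(i) (and equivalent, via the closed range theorem, to the quantitative Corollary \ref{usecor} itself). All the analytical machinery of the section---completeness of $(X_0,\omega_E)$, the cutoffs of Lemma \ref{cutoff}, and the Poincar\'e inequality of Theorem \ref{prop:2}---enters only through these two previously established results.
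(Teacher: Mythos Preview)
Your argument is correct and takes a genuinely different route from the paper. The paper mimics the proof of Corollary~\ref{usecor}: it passes to $u=\star v$, translates the hypothesis $v\perp\Ker(\dbar)$ into $u\perp\Ker(D'^{\star})$, and then runs a fresh contradiction argument using Bochner's formula and Proposition~\ref{poinc}. You instead observe that once Corollary~\ref{usecor} is available, the closed-range statement for $\dbar^\star$ already forces the closed-range statement for $\dbar$ by a one-line Hilbert space adjunction. This is slicker and conceptually cleaner; what the paper's approach buys is independence---each corollary is proved from Theorem~\ref{prop:2} and Bochner alone, without relying on the other.

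One point of care: in the paper's logical order, Theorem~\ref{Hodge}(i) is proved \emph{after} Corollaries~\ref{usecor} and~\ref{usecorII} (its proof explicitly invokes both to close the ranges of $\dbar$ and $\dbar^\star$). So your opening appeal to Theorem~\ref{Hodge}(i) is, strictly speaking, circular. You correctly identify in your final paragraph that what you actually need is only $(\Ker\dbar)^\perp=\overline{\Imm\dbar^\star}$ (a general Hilbert space fact for closed densely defined operators) together with the closedness of $\Imm\dbar^\star$ in bidegree $(n,q)$, and the latter is exactly what Corollary~\ref{usecor} in bidegree $(n,q+1)$ provides. Rewriting the first paragraph to cite these two ingredients directly, rather than Theorem~\ref{Hodge}(i), would remove the circularity and make the proof self-contained within the paper's logical flow.
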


\begin{proof}
	The arguments are completely identical: if $v$ is orthogonal on $\Ker(\dbar)$, then $u:= \star v$ is orthogonal to 
	the kernel of $D'^\star$. Thus the inequality to be established is 
	\begin{equation}\label{van24}
		\int_{X_0}|u|^2_{\omega_E}e^{-\varphi_L}dV_{\omega_E}\leq 
		C\int_{X_0}|D'^\star u|^2_{\omega_E}e^{-\varphi_L}dV_{\omega_E}.
	\end{equation}
	holds for any $L$-valued form $u$ of type $(n-q, 0)$ orthogonal to the kernel of $D'^\star$. This is (again) done by contradiction 
	--note that in this case $D' _{h_L} u= 0$-- , and we will skip the details.
\end{proof}

\medskip

\begin{proof}[Proof of Theorem~\ref{Hodge}]
	This statement is almost contained in \cite[chapter VIII, pages
	367-370]{bookJP}. Indeed, in the context of complete manifolds one has the
	following decomposition
	\begin{equation}
		L^2_{n, q}(X_0, L)= {\mathcal H}_{n,q}(X_0, L)\oplus \overline{\Imm \dbar}\oplus \overline{\Imm \dbar^\star}.
	\end{equation}
	We also know (see \emph{loc. cit.}) that the adjoints $\dbar^\star$ and
	$D^{\prime\star}$ in the sense of von Neumann coincide with the formal
	adjoints of $\dbar$ and $D^\prime$ respectively.
	
	It remains to show that the ranges of the $\dbar, \dbar^\star$-operators are closed with
	respect to the $L^2$ topology. In our set-up, this is a consequence of the
	particular shape of the metric $\omega_E$ at infinity (i.e. near the support
	of $\pi (E)$).  The image of $\dbar^\star$ is closed thanks to Corollary \ref{usecor} and the fact that the image of $\dbar$ is closed follows from Corollary \ref{usecorII}
	\medskip
	
	The assertion $(ii)$ is an easy consequences of $(i)$, together with standard considerations. 
\end{proof}
\medskip

\begin{remark}{\rm 
		It would be very interesting to know if the images of the operators $\dbar$ and $\dbar^\star$ are closed for $L$-valued forms of 
		arbitrary type, especially if the non-singular part $\theta_L$ of the curvature of $(L, h_L)$ has arbitrary sign. In our arguments, the requirement 
		$\theta_L\geq0$ is imposed by the -extensive- use of Bochner formula. We refer to \cite{EV86} and \cite{Fuj92} for results in this direction.}\end{remark}

\medskip

\noindent In the setting of the current section, the harmonic $(n,1)$-forms have the following properties.

\begin{lemme}\label{vanishing} In the setting of Theorem \ref{Hodge},
	let $\alpha$ be a $L^2$ $\Delta''$-harmonic form of type $(n,1)$ with values in $L$. We denote by $F:= \star \alpha$ its Hodge dual. Then the following hold.
	
	\begin{enumerate}
		
		\item[\rm (1)] We have 
		\begin{equation}\nonumber
			\dbar F=0\qquad \text{and } \qquad D' _{h_L} F=0.
		\end{equation}
		
		\item[\rm (2)] Let $\Omega\subset X$ be a coordinate open set, and let $f$ be any holomorphic function defined on $\ol \Omega$. We assume that the support of the set of zeroes of $f$ is contained in 
		$(h_L = \infty)$. 
		Then the form $\displaystyle F\wedge \frac{df}{f}$ is non-singular on $X$ (but it is not clear whether this form is $L^2$ with respect to $e^{-\varphi_L}$ or not).
		
		\item[\rm (3)] Let $\Omega\subset X$ be a coordinate open set, and let $g$ be any holomorphic function defined on $\ol \Omega$. Then we have  
		\begin{equation}\nonumber
			\int_\Omega |F\wedge dg|^2e^{-\varphi_L}< \infty.    
		\end{equation} 
	\end{enumerate}
\end{lemme}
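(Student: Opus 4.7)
I plan to establish the three claims in order, using (1) as the structural basis for (2) and (3).

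For (1), completeness of $(X_0,\omega_E)$ (Theorem~\ref{Hodge}) gives that $\Delta''$-harmonicity of $\alpha$ is equivalent to $\dbar\alpha = 0$ and $\dbar^\star\alpha = 0$. The Bochner-Kodaira identity $\Delta'' = \Delta' + [i\theta_L,\Lambda_{\omega_E}]$ on $X_0$, combined with $\theta_L\geq 0$ and the type $(n,1)$ of $\alpha$, forces by pairing with $\alpha$ the additional vanishing $D^{\prime\star}\alpha = 0$ together with vanishing of the curvature contribution ($D'\alpha = 0$ is automatic by type). The relation $\dbar^\star\circ\star = -\star\circ D'_{h_L}$ already used in the proof of Corollary~\ref{usecor} turns $\dbar^\star\alpha = 0$ into $D'_{h_L}F = 0$; and the Kähler identity $D^{\prime\star} = i[\Lambda_{\omega_E},\dbar]$ applied to $\alpha$, together with $\dbar\alpha = 0$, reduces $D^{\prime\star}\alpha = 0$ to $\dbar\Lambda_{\omega_E}\alpha = 0$. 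Since $\Lambda_{\omega_E}\alpha$ is a constant multiple of $F$ (Lefschetz decomposition for an $(n,1)$-form on an $n$-fold), this gives $\dbar F = 0$. In particular $F$ is a holomorphic $(n-1,0)$-form on $X_0$ with values in $L$.

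For (2), work in coordinates on $\Omega$ as in \eqref{eq30}, so that $E\cap\Omega = \{z_1\cdots z_p = 0\}$ and $\varphi_L\equiv\sum_{\alpha=1}^p e_\alpha\log|z_\alpha|^2$ modulo smooth. The assumption that the zero divisor of $f$ is contained in $(h_L = \infty)$ yields a factorisation $f = z_{i_1}^{k_1}\cdots z_{i_s}^{k_s}\cdot u$ with $u$ invertible and each $e_{i_j} > 0$, so that $df/f = \sum_j k_j\,dz_{i_j}/z_{i_j}$ modulo smooth. Decomposing $F = \sum_k F_k\,\widehat{dz_k}$ (with $\widehat{dz_k}$ the $(n-1,0)$-form that omits $dz_k$), the singular part of $F\wedge df/f$ collects into terms $\pm k_j\,F_{i_j}/z_{i_j}\cdot dz_1\wedge\cdots\wedge dz_n$. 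A direct computation using the model Poincaré metric shows that the $F_k$-contribution to $|F|^2_{\omega_E}e^{-\varphi_L}\,dV_{\omega_E}$ for $k\leq p$ has the shape
\begin{equation*}
\frac{|F_k|^2}{|z_k|^{2+2e_k}\log^2|z_k|^2}\cdot\prod_{i\neq k}|z_i|^{-2e_i}\cdot(\text{smooth})\,d\lambda.
\end{equation*}
Since $F_{i_j}$ is holomorphic by (1) and $e_{i_j} > 0$, $L^2$-finiteness of this expression near $\{z_{i_j} = 0\}$ forces the vanishing order of $F_{i_j}$ along $\{z_{i_j} = 0\}$ to strictly exceed $e_{i_j}$, in particular to be at least $1$. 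Therefore $F_{i_j}/z_{i_j}$ extends holomorphically across $\{z_{i_j} = 0\}$, giving (2).

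For (3), the key observation is that a top-degree $(n,0)$-form has a metric-independent pointwise $L^2$-norm, so $|F\wedge dg|^2\,d\lambda = |F\wedge dg|^2_{\omega_E}\,dV_{\omega_E}$. Combining this with the pointwise bound $|F\wedge dg|^2_{\omega_E}\leq C\,|F|^2_{\omega_E}\,|dg|^2_{\omega_E}$ and the boundedness of $|dg|^2_{\omega_E}$ on $\Omega$ (since the Poincaré metric $\omega_E$ dominates the Euclidean one), we obtain
\begin{equation*}
\int_\Omega|F\wedge dg|^2 e^{-\varphi_L} \leq C'\int_\Omega|F|^2_{\omega_E}e^{-\varphi_L}\,dV_{\omega_E} < +\infty,
\end{equation*}
the last inequality holding because $F = \star\alpha$ is $L^2$ on $X_0$. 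The step I expect to require the most care is (2): the interplay of the Poincaré factor $|z_k|^{-2}\log^{-2}|z_k|^2$ with the weight $e^{-\varphi_L}$ must be tracked precisely in order to isolate the sharp vanishing order of $F_{i_j}$ that produces the holomorphic extension of $F_{i_j}/z_{i_j}$.
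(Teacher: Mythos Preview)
Your argument is correct and follows the paper's approach closely. For (1) and (3) your proof is essentially identical to the paper's, just with the standard identities $\dbar^\star\circ\star=-\star\circ D'_{h_L}$ and $D'^{\star}=i[\Lambda_{\omega_E},\dbar]$ spelled out explicitly where the paper simply asserts the conclusions.

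For (2) the underlying mechanism is the same---the $L^2$ finiteness of $F$ with respect to $(\omega_E,h_L)$ forces $F\wedge df$ to vanish along $\mathrm{Div}(f)$---but the packaging differs. The paper establishes the coordinate-free estimate $|df|^2_{\omega_E}\sim |f|^2\log^2|f|$ (verified after pulling back by $\pi$) and then bounds $\displaystyle\int_\Omega\Big|F\wedge\frac{df}{f}\Big|^2\frac{e^{-\varphi_L}}{\log^2|f|}$ by $\Vert F\Vert^2_{L^2}$ directly; the positive Lelong number of $\varphi_L$ along each component of $\mathrm{Div}(f)$ then forces the vanishing. You instead expand $F=\sum_k F_k\,\widehat{dz_k}$ and read off the vanishing order of each $F_{i_j}$ from the explicit shape of the $L^2$ density. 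Your route is more hands-on; the paper's is slightly cleaner because it avoids the coordinate bookkeeping. Two small points to tidy: your reference to \eqref{eq30} places you implicitly on $\widehat X$ rather than on $\Omega\subset X$, so you should say so (the paper does the same passage); and the $\log^{-2}$ factor actually gives vanishing order $\geq e_{i_j}$ rather than strictly greater, though since the order is an integer and $e_{i_j}>0$ your conclusion ``at least $1$'' is unaffected.
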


\begin{proof}
	
	\noindent Note that given any $\Delta''$-harmonic $(n,1)$-form $\alpha$, 
	by Bochner formula combined with the fact that the curvature of $(L, h_L)$ is semi-positive,
	we have
	\begin{equation}\label{snc8-}
		D '^{\star} \alpha =0  \qquad\text{on } X \setminus (h_L = \infty) .
	\end{equation}
	So if we write $\displaystyle \alpha = \omega_E \wedge F$, it follows that
	\begin{equation}\label{dps}
		F \in H^0 \big(X, \Omega_X ^{n-1} \otimes L\big),
		\qquad \int_X|F|^2_{\omega_E}e^{-\varphi_L}dV_{\omega_E}< \infty
	\end{equation}
	thanks to the property \eqref{snc8-}. 
	Moreover, as $\displaystyle \dbar^* \alpha= 0$ it turns out that we have $D'_{h_L}  F=0$ as well. 
	We obtain thus $(1)$ and $(3)$.
	Note that \eqref{dps} is valid in a more general setting,
	cf. \cite{DPS01}, \cite{Wu19}.  
	\bigskip
	
	\noindent As for $(2)$, by our choice of the metric $\omega_E$, we show first that we have
	\begin{equation}\label{snc10}
		|df|^2 _{\omega_E} \sim  |f|^2 \log^2 |f|.  
	\end{equation}
	Indeed, assume that via the bimeromorphism $\pi: \wh X \rightarrow X$ the function $f$ corresponds locally to
	the function $\displaystyle \prod_{i=1}^r z_i^{p_i}$ (notations as in the beginning of this section). Then the metric $\omega_E$
	is quasi-isometric to the Poincaré metric with singularities along $z_1\dots z_r= 0$. The $\pi$-inverse image of $\displaystyle\frac{df}{f}$ is equal to
	$$\sum_{i=1}^rp_i\frac{dz_i}{z_i}
	$$
	and thus we have
	$$\left|\frac{\partial f}{f}\right|^2_{\omega_E}\simeq \sum p_i^2\log^2|z_i|^2$$
	and the RHS of this quantity is the same as $\log^2|f|^2$.
	
	\noindent Therefore we infer
	\begin{equation}\label{snc11}
		\int_{\Omega}  \left|F\wedge \frac{df}{f}\right|^2\frac{e^{-\varphi_L}}{\log^2 |f|} \leq 
		C\int_X {|F |^2_{\omega_E} }{}e^{-\varphi_L}dV_{\omega_E}.  
	\end{equation}
	Since the RHS of \eqref{snc11} is finite and $\varphi_L$ has non zero Lelong number over any component of $Div (f)$, it follows that $F \wedge df$ vanishes along $Div (f)$. 
\end{proof}	
\medskip

\noindent Consider the equation 
\begin{equation}\label{hodge1}
	\dbar u= D' _{h_L} w+ \Theta(L, h_L)\wedge \tau \qquad\text{ on } X\setminus \{h_L = \infty\}
\end{equation}
where $\tau, w$ and $D'w$ are $L^2$, and $D' _{h_L} w+ \Theta(L, h_L)\wedge \tau$ is $\dbar$-closed on  $X\setminus \{h_L = \infty\}$. We show next that 
one can solve it in a fair general context, and moreover obtain $L^2$ estimates for the solution with minimal norm. We refer to \cite[Thm A.5]{Wan17} for a similar argument in the non-singular case.

\begin{thm}\label{deprim} Let $(L, h_L)$ be a holomorphic line bundle on $X$ with a possible singular metric $h_L$ with analytic singularities along a subvarities $Z \subset X$ and whose curvature current is semi-positive.
	Consider a complete K\"{a}hler metric $(X\setminus Z, \omega_E)$.
	Let $w$ be an $L$-valued $(n-1, 1)$--form on $X\setminus Z$ such that $w$ and $D' _{h_L} w$ are in $L^2$. Let also $\tau$ be an $L$-valued $(n-1, 0)$--form on $X\setminus Z$ such that $\tau$
	and $\Theta(L, h_L)\wedge \tau$ are $L^2$.
	
	If $D' _{h_L} w+ \Theta(L, h_L)\wedge \tau$ is $\dbar$-closed on  $X\setminus Z$,
	then there exists a solution $u$ of the equation \eqref{hodge1} such that 
	\begin{equation}\label{hodge2}
		\int_X|u|^2e^{-\varphi_L}\leq \int_X|w|^2_{\omega_E}e^{-\varphi_L}dV_{\omega_E}
		- \int_{X\setminus Z}\langle[\Theta(L, h_L), \Lambda_{\omega_E}]\tau, \tau \rangle_{\omega_E}e^{-\varphi_L}dV_{\omega_E}
	\end{equation}
\end{thm}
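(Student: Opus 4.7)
The plan is to apply H\"ormander's duality method. By the Hahn-Banach theorem, producing a solution $u$ of $\dbar u = f$ with $\|u\|^2 \le M$ (for $f := D'_{h_L}w + \Theta(L,h_L) \wedge \tau$ and $M$ the right-hand side of \eqref{hodge2}) is equivalent to establishing the dual inequality
\[ |\langle f, \alpha\rangle|^2 \le M \cdot \|\dbar^\star \alpha\|^2 \]
for every $L$-valued $(n,1)$-form $\alpha \in \Dom(\dbar^\star)$, the pairing being the $L^2$ inner product on $X \setminus Z$ weighted by $e^{-\varphi_L}$. To apply Theorem~\ref{Hodge}, one first reduces---via a log resolution $\pi : \widehat X \to X$ which is an isometry outside the exceptional divisor---to the case where $h_L$ has snc log poles and $\omega_E$ is of Poincar\'e type near them. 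The Hodge decomposition then allows us to restrict attention to $\alpha \in \Ker \dbar$, as any component in $\overline{\Imm(\dbar^\star)}$ is annihilated by $\dbar^\star$ and pairs trivially with the $\dbar$-closed form $f$.

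For $\alpha \in \Ker\dbar$ of type $(n,1)$ the Bochner-Kodaira-Nakano identity (with $D'\alpha=0$ for type reasons) reads
\[ \|\dbar^\star \alpha\|^2 = \|D'^\star \alpha\|^2 + \int_{X\setminus Z} \langle [\Theta, \Lambda_{\omega_E}]\alpha, \alpha\rangle\, e^{-\varphi_L}\, dV_{\omega_E}. \]
Integration by parts (justified via the cutoff $\mu_\ep$ of Lemma~\ref{cutoff}, using the completeness of $\omega_E$ and the $L^2$ bounds on $w, D'w, \tau, \Theta\wedge\tau$) converts $\langle D'w, \alpha\rangle$ into $\langle w, D'^\star\alpha\rangle$. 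For the curvature term I would use the pointwise Cauchy-Schwarz estimate
\[ |\langle \Theta \wedge \tau, \alpha\rangle|^2 \le \langle [\Theta, \Lambda_{\omega_E}]\alpha, \alpha\rangle \cdot \bigl(-\langle [\Theta, \Lambda_{\omega_E}]\tau, \tau\rangle\bigr), \]
valid for $\Theta \ge 0$, $\tau$ of type $(n-1,0)$, $\alpha$ of type $(n,1)$, and proved by diagonalising $\Theta$ at a point (the second factor is non-negative since $[\Theta, \Lambda_{\omega_E}]$ acts non-positively on $(p, 0)$-forms with $p < n$; cf.\ \eqref{van23}). Applying $L^2$ Cauchy-Schwarz to each of the two pieces and then a two-term Cauchy-Schwarz in $\R^2$, and invoking the Bochner identity above, yields $|\langle f,\alpha\rangle|^2 \le M \cdot \|\dbar^\star\alpha\|^2$, which closes the argument.

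The main technical hurdle is the rigorous integration by parts on the non-compact manifold $X \setminus Z$: after truncation by $\mu_\ep$, the boundary terms are controlled by Cauchy-Schwarz against the uniformly bounded $|\partial \mu_\ep|_{\omega_E}$ (Lemma~\ref{cutoff}) and the $L^2$ data, and they vanish as $\ep \to 0$. Note that no separate argument is needed to show $f \perp \mathcal H_{n,1}$: the dual inequality itself forces $\langle f, \alpha\rangle = 0$ on harmonic $\alpha$ (since $\dbar^\star \alpha = 0$ there), so $f$ automatically lies in the image of $\dbar$, whose closed-range property also follows from Theorem~\ref{Hodge}.
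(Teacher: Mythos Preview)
Your argument is correct and follows essentially the same route as the paper's proof: the dual inequality via Hahn--Banach, projection of the test form onto $\Ker\dbar$ (where the component in $\overline{\Imm\dbar^\star}\subset\Ker\dbar^\star$ drops out on both sides), the Bochner identity $\|\dbar^\star\alpha\|^2=\|D'^\star\alpha\|^2+\langle[\Theta,\Lambda]\alpha,\alpha\rangle$ for $(n,1)$-forms in $\Ker\dbar$, the pointwise curvature Cauchy--Schwarz, and the final $\R^2$ Cauchy--Schwarz to recombine the two pieces. The only remark is that the detour through a log resolution and Theorem~\ref{Hodge} is unnecessary here: the paper works directly on the complete manifold $X\setminus Z$ with compactly supported test forms, and the orthogonal splitting $L^2=\Ker\dbar\oplus(\Ker\dbar)^\perp$ requires no closed-range or snc hypothesis.
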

\begin{proof}
	Let $\xi$ be a smooth $(n,1)$-form of compact support in $X\setminus Z$. Let $\xi =\xi_1 +\xi_2$ be its decomposition according to $\Ker(\dbar)$ and its orthogonal.
	
	\noindent By using the $L^2$-assumptions, we have 
	$$ \int_{X\setminus Z}\langle D'w+ \Theta(L, h_L)\wedge \tau , \xi \rangle_{\omega_E}e^{-\varphi_L}dV_{\omega_E} =
	\int_{X\setminus Z}\langle D'w+ \Theta(L, h_L)\wedge \tau , \xi_1 \rangle_{\omega_E}e^{-\varphi_L}dV_{\omega_E} .$$
	Then the semipositive curvature condition and the Bochner equality imply that
	\begin{equation}\label{hodge200} \left|\int_{X\setminus Z}\langle D'w , \xi_1 \rangle_{\omega_E}e^{-\varphi_L}dV_{\omega_E} \right|^2\end{equation}
	\noindent is smaller than
	
	\begin{equation}\label{hodge201}\int_X|w|^2_{\omega_E}e^{-\varphi_L}dV_{\omega_E} \cdot 
		\int_X|\dbar^\star \xi|^2_{\omega_E}e^{-\varphi_L}dV_{\omega_E}.
	\end{equation}
	For the term containing the curvature of $(L, h_L)$ we are using Cauchy inequality and we infer that 
	\begin{equation}\label{hodge202}\left|\int_{X\setminus Z}\langle  \Theta(L, h_L)\wedge \tau , \xi_1 \rangle_{\omega_E}e^{-\varphi_L}dV_{\omega_E}\right|^2\end{equation}
	is smaller than
	
	\begin{equation}\label{hodge203}\int_X\langle -[\Theta(L, h_L), \Lambda_{\omega_E}]\tau, \tau \rangle_{\omega_E}e^{-\varphi_L}dV_{\omega_E} \cdot 
		\int_X\langle[\Theta(L, h_L), \Lambda_{\omega_E}]\xi_1, \xi_1 \rangle_{\omega_E}e^{-\varphi_L}dV_{\omega_E}.
	\end{equation}  
	We are using again Bochner equality and we infer that 
	\begin{equation}\label{hodge204}\left|\int_{X\setminus Z}\langle  \Theta(L, h_L)\wedge \tau , \xi_1 \rangle_{\omega_E}e^{-\varphi_L}dV_{\omega_E}\right|^2
	\end{equation}
	is smaller than
	\begin{equation}\label{hodge205}\int_X\langle -[\Theta(L, h_L), \Lambda_{\omega_E}]\tau, \tau \rangle_{\omega_E}e^{-\varphi_L}dV_{\omega_E} \cdot  \int_X|\dbar^\star \xi|^2_{\omega_E}e^{-\varphi_L}dV_{\omega_E}.
	\end{equation}  
	Then the theorem is proved by the standard $L^2$-estimate argument.
\end{proof}
\medskip


\subsection{Hodge decomposition for metrics with conic singularities, I}\label{subsectconic}

\medskip

\noindent We discuss in this section the analog of Theorem 
\ref{Hodge} for metrics with conic singularities. This will be important for the applications we are aiming at.
There are two possibilities to deal with this sort of problems: either we approximate our metrics with complete ones (e.g. with Poincaré
singularities), or directly, by using local uniformisations. In this subsection we use the former approach and in the next subsection we present the later.

\noindent Since most of the arguments in the proof of Theorem \ref{Hodge} will be used here, we will skip a few details. In the last part of this section we offer a few comments about the $L^2$ estimates in this class of singularities. The setting of this subsection is as follows.
\smallskip

\noindent \textbf{Setting}: Let $X$ be a compact K\"ahler manifold, and let $\displaystyle Y= \sum Y_i$ be a snc divisor. We denote by $\omega_{\cC}$ a metric with conic singularities along the $\Q$-divisor 
\begin{equation}\label{pave28}
	\sum_{i\in I}\left(1-\frac{1}{m}\right)Y_i
\end{equation}
where $m$ is a positive integer. By this we mean that locally we have
\begin{equation}\label{pave99}
	\omega_\cC\simeq \sum_{i=1}^r\frac{\sqrt{-1}dz_i\wedge d\ol z_i}{|z_i|^{2-\frac{2}{m}}}
	+ \sum_{i\geq r+1}\sqrt{-1}dz_i\wedge d\ol z_i
\end{equation}
that is to say, $\omega_\cC$ is quasi-isometric with the RHS of \eqref{pave99},
where $z_1\dots z_r=0$ is the local equation of the divisor $Y$.
\smallskip

\noindent Let $(L, h_L)$ be a line bundle endowed with a (singular) metric $h_L$ such that the following holds.
\smallskip

\begin{enumerate}
	\smallskip
	
	\item[(i)] The singularities of the metric $h_L$ are contained in the support of $Y$, i.e. 
	$\displaystyle \varphi_L\simeq \sum q_i\log|z_i|^2$ modulo a smooth function, where $q_i\in \Q$ and as above, $z_i$ represent the local equations of the components of $Y$. Note that the $q_i$ are not necessarily positive. 
	\smallskip
	
	\item[(ii)] For each $q_i\in \Q\setminus \Z$, we assume that $\displaystyle
	\left\lfloor q_i- \frac{1}{m}\right\rfloor= \lfloor q_i\rfloor$ holds true and that $mq_i\in \Z$. 
\end{enumerate}
Note that the requirement $(i)$ implies that the curvature current of $(L, h_L)$ can be written as
	$$i\Theta(L, h_L)= \sum q_i[Y_i]+ \theta_L$$
where $\theta_L$ is a smooth form on $X$.   

\medskip

\noindent Our next result states as follows. 
\begin{thm}\label{CoHo}
	Let $(L, h_L)\to X$ be a line bundle endowed
	with a metric $h_L$ such that the requirements {\rm (i)}-{\rm (ii)} above are satisfied; in addition, we assume $i\Theta(L, h_L) \geq 0$. Let $\omega_\cC$ be a K\"ahler metric with conic singularities as above.
	\begin{enumerate}
		\smallskip
		
		\item[\rm (i)] The following equality holds
		\begin{equation}\label{pave29}\nonumber
			L^2_{n, q}(X_0, L)= {\mathcal H}_{n,q}(X_0, L)\oplus
			\Imm \dbar\oplus \Imm \dbar^\star,
		\end{equation}
		for any $q\geq 0$, where $X_0:= X\setminus Y$.
		\smallskip
		
		\item[\rm (ii)] For any harmonic form $\xi \in {\mathcal H}_{n,q}(X_0, L)$, we have 
		\begin{equation}\label{diffdep}\nonumber
			\dbar \xi =\dbar^\star \xi =(D')^{\star} \xi =0 .
		\end{equation}
		In particular, $\star \xi$ is a $L^2$-holomorpic form on $X$. 
	\end{enumerate}	
\end{thm}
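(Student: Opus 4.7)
The plan follows the template of the proof of Theorem~\ref{Hodge}. The essentially new difficulty is that $\omega_\cC$ is \emph{incomplete} along $Y$, in contrast to the Poincaré metric $\omega_E$, so the von Neumann Hodge decomposition for complete Kähler manifolds is not directly available. I would handle this by introducing the family of complete Kähler metrics
$$\omega_{\cC,\epsilon} := \omega_\cC + \epsilon\, \omega_E, \qquad \epsilon > 0,$$
on $X_0 = X\setminus Y$, where $\omega_E$ is a fixed Kähler metric with Poincaré singularities along $Y$. For each fixed $\epsilon$ the framework of Theorem~\ref{Hodge} applies to $(L, h_L)$ with respect to $\omega_{\cC,\epsilon}$, and the point is to pass to the limit $\epsilon\to 0$, using that $\omega_\cC$ has finite volume.

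The analytic core of the argument is a conic analogue of Proposition~\ref{poinc}. For an $L$-valued $(p,0)$-form $\tau$ of compact support in a coordinate chart $\Omega$, one has to prove
$$\int_\Omega |\tau|^2_{\omega_\cC}\, e^{-\varphi_L}\, dV_{\omega_\cC} \,\leq\, C \int_\Omega |\bar\partial \tau|^2_{\omega_\cC}\, e^{-\varphi_L}\, dV_{\omega_\cC},$$
with $C$ independent of $\tau$ (and ultimately of $\epsilon$). As in Lemma~\ref{fourier}, after restriction to a single monomial of $\tau$ and Fourier expansion $\tau = \sum_k a_k(t)\, e^{\sqrt{-1}\,k\theta_i}$ in each angular coordinate, the proof reduces to one-dimensional weighted estimates of the form
$$\int_0^1 |v|^2\, t^\kappa\, dt \leq C \int_0^1 |v'|^2\, t^{\kappa + 2(2-2/m)}\, dt,$$
with $\kappa$ ranging in an explicit shifted lattice whose shift is determined by the conic exponent $1-1/m$ and by the weight $q_i$ of $\varphi_L$. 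As in \eqref{e201}, the integration-by-parts scheme of Lemma~\ref{fourier} succeeds provided the resonant exponent $\kappa = -1$ is avoided; the hypothesis~(ii), namely $\lfloor q_i - 1/m\rfloor = \lfloor q_i\rfloor$ and $m q_i \in \Z$, is exactly the statement that this lattice for $\kappa$ is disjoint from the resonance, and yields an estimate uniform in the Fourier mode.

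With the Poincaré inequality in place, the argument of Corollaries~\ref{usecor} and~\ref{usecorII} (Bochner formula combined with $i\Theta(L, h_L) \geq 0$ on $(n,q)$-forms) gives closedness of the images of $\bar\partial$ and $\bar\partial^\star$ in $L^2_{n,q}(X_0, L)$ with respect to $\omega_\cC$; together with the $\omega_{\cC,\epsilon}$-decomposition and a limit argument based on the cut-off functions $\mu_\epsilon$ of Lemma~\ref{cutoff} and the finiteness of $\int_{X_0} dV_{\omega_\cC}$, this yields~(i). For~(ii), the Bochner--Kodaira identity applied to $\xi \in \cH_{n,q}(X_0, L)$ reads
$$0 = \|\bar\partial\xi\|^2 + \|\bar\partial^\star\xi\|^2 = \|D'^\star \xi\|^2 + \int_{X_0}\langle[i\Theta(L, h_L), \Lambda_{\omega_\cC}]\xi, \xi\rangle\, e^{-\varphi_L}\, dV_{\omega_\cC},$$
and both terms on the right are non-negative (the curvature term because $\xi$ is of top $p$-type and $i\Theta(L, h_L)\geq 0$), so both must vanish. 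In particular $D'^\star\xi = 0$; taking the Hodge star, $F := \star\xi$ is an $L$-valued holomorphic $(n-q, 0)$-form on $X_0$ which is $L^2$, and its extension to $X$ follows by Hartogs exactly as in the proof of Lemma~\ref{vanishing}.

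The main obstacle will be the uniform Poincaré inequality: verifying that the compatibility~(ii) is \emph{precisely} the condition that excludes the resonant Fourier mode, and carefully tracking the exponents coming from the conic volume form, the metric on forms (which contributes factors $|z_i|^{2-2/m}$ for each $dz_i$), and the weight $e^{-\varphi_L} = \prod|z_i|^{-2q_i}$. A secondary technical issue is to ensure that the Poincaré constant is independent of $\epsilon$, which should follow from the pointwise inequality $\omega_\cC \leq \omega_{\cC,\epsilon}$ together with the sharp exponent count above.
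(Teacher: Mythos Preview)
Your overall strategy for (i) — perturb $\omega_\cC$ to complete metrics $\omega_{\cC,\ep}=\omega_\cC+\ep\,\omega_E$, run Theorem~\ref{Hodge} for each $\ep$, and pass to the limit using a uniform Poincar\'e inequality — is exactly the paper's approach. The paper states and proves this uniform inequality as Theorem~\ref{Poinc}, and the passage to the limit is carried out precisely along the lines you sketch.

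However, your account of \emph{where} hypothesis (ii) enters is not the paper's, and I do not believe your version works as written. You claim (ii) is the condition that removes a resonant exponent in a conic analogue of Lemma~\ref{fourier}. The paper does not prove a pure-conic Fourier lemma; for the local estimate \eqref{pave36} it still uses the Poincar\'e-type weights coming from the $\ep\,\omega_E$ term, so the log factors of Lemma~\ref{fourier} are present and no new resonance issue arises. Hypothesis (ii) is used for a different purpose: to show that the space of $L^2$ holomorphic $(p,0)$-forms is \emph{independent of $\ep$} (see \eqref{pave4411}--\eqref{invarspace}). This is what lets the contradiction argument go through uniformly: if $u_\ep\perp\{\text{holomorphic}\}$ with respect to $\omega_{Y,\ep}$ and $u_\ep\rightharpoonup u$ holomorphic, one needs $\langle u,u_\ep\rangle_{\omega_{Y,\ep}}\to \|u\|^2_{\omega_\cC}$ to conclude $u=0$. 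Your exponent $\kappa+2(2-2/m)$ also does not match what a direct computation gives, which suggests the Fourier route you sketch would need to be redone from scratch.

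For (ii) there is a genuine gap. You write the Bochner--Kodaira identity for $\xi\in\cH_{n,q}(X_0,L)$ with respect to $\omega_\cC$ as if it were automatic. But $\omega_\cC$ is incomplete, so for a merely $L^2$ harmonic form you have neither $\langle\Delta''\xi,\xi\rangle=\|\dbar\xi\|^2+\|\dbar^\star\xi\|^2$ nor the $D'$ integration by parts for free; boundary terms along $Y$ must be controlled, and at this stage you know nothing about the behaviour of $\xi$ there. The paper avoids this entirely: it decomposes the given conic-harmonic $\xi$ with respect to each complete metric $\omega_{Y,\ep}$, obtains \eqref{diff} for the $\omega_{Y,\ep}$-harmonic piece $\xi_\ep$ by Bochner on the complete manifold, and then shows $\xi_\ep\to\xi$ using the orthogonality of $\cH,\Imm\dbar,\Imm\dbar^\star$ (itself checked via local ramified covers). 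If you want to argue Bochner directly on $(X_0,\omega_\cC)$, you would first need the regularity theory of the next subsection (conic-smoothness of harmonic forms via ramified covers), which is a separate and nontrivial input.
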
  
\medskip

\noindent The proof of Theorem \ref{CoHo} is based on the corresponding version for metrics with Poincar\'e singularities, together with the following result.
A different argument for a more general version of this statement will be given in the next subsection.  We have decided to include this first proof here because the statement \ref{Poinc} below could be useful in other contexts. 
\smallskip

\noindent Let 
\begin{equation}\label{pave35}
	\omega_{Y, \ep}:= \omega_\cC+ \ep \omega_\cP
\end{equation}
be a sequence approximating $\omega_\cC$, where $\omega_\cP$ is a metric with Poincaré singularities on $Y$. We have the following statement.

\begin{thm}\label{Poinc}
There exists a positive constant $C> 0$ (independent of $\ep$) such
	that the following inequality holds
	\begin{equation}\label{pave30}
		\int_{X_0}|u|^2_{\omega_{Y, \ep}}e^{-\varphi_L}dV_{\omega_{Y, \ep}}\leq C \int_{X_0}|\dbar u|^2_{\omega_{Y, \ep}}e^{-\varphi_L}dV_{\omega_{Y, \ep}}
	\end{equation}
	for any $L$-valued form $u$ of type $(p, 0)$ which belongs to the domain of $\dbar$ and which is orthogonal to the
	space of $L^2$ harmonic $(p,0)$--forms (with respect the metric $\omega_{Y, \ep}$).
\end{thm}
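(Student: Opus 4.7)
My plan is to adapt the proof of Theorem \ref{prop:2} to the family of metrics $\omega_{Y,\ep}$, taking particular care to ensure that every constant produced is independent of $\ep$. The strategy decomposes into three steps.

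First, I would localize the inequality. Via a partition of unity on $X$, it suffices to prove the estimate for forms $u$ of compact support in a coordinate chart $\Omega$ with $Y \cap \Omega = \{z_1 \cdots z_r = 0\}$, provided the local constants are uniform in $\ep$ (a few patches meet $Y_{\mathrm{sing}}$ or $Y_{\mathrm{reg}}$ and some do not touch $Y$; away from $Y$ the metric $\omega_{Y,\ep}$ is uniformly equivalent to $\omega_\cC$ and the statement is trivial). In each chart meeting $Y$, both $\omega_\cC$ and $\omega_\cP$ are quasi-isometric to explicit product model metrics, and
$$\omega_{Y,\ep} \;\simeq\; \sum_{i=1}^{r}\!\Big(\frac{1}{|z_i|^{2-2/m}}+\frac{\ep}{|z_i|^{2}\log^{2}|z_i|^{2}}\Big)\,i\,dz_i\wedge d\bar z_i+\sum_{i>r}i\,dz_i\wedge d\bar z_i .$$
Together with the local description of $\varphi_L$ provided by assumption (i), the inequality reduces to a family of one–variable weighted inequalities on the unit disc.

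Second, I would establish the one-variable analogue of Lemma \ref{fourier} with weights depending on $\ep$. Writing the $\bar\partial$-component in the direction $z_i$ as a Fourier series in the angular variable exactly as in \eqref{c5}--\eqref{c7}, the problem is reduced to radial ODE inequalities of the shape
$$\int_0^1 |v|^{2}\bigl(t^{k}+\ep\,t^{k'}\log^{-2}t\bigr)\,dt \;\le\; C\int_0^1 |v'|^{2}\bigl(t^{k+2}\log^{2}t+\ep\,t^{k'+2}\bigr)\,dt,$$
with $k,k'\in\delta+\Z$ for $\delta$ determined by $q_i$ and $1/m$. The integration-by-parts identities \eqref{e1}--\eqref{e2} and \eqref{c11}--\eqref{c12} apply term by term; crucially, condition (ii) (the equality $\lfloor q_i-1/m\rfloor=\lfloor q_i\rfloor$ together with $mq_i\in\Z$) forces the exponents $k+1$ appearing in both the conic and the Poincaré weighted identities to stay bounded away from zero, so the threshold $x_\delta$ in \eqref{e201} and \eqref{e2011} can be chosen uniformly, yielding an $\ep$-independent constant. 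On the region $\{|z_i|\ge x_\delta\}$, where no $\ep$-sensitive behaviour occurs, a standard Poincaré inequality for the fixed metric $\omega_\cC$ applies.

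Third, using these local estimates together with the cutoff functions $\mu_\ep$ from Lemma \ref{cutoff} (which give uniform control since $\omega_{Y,\ep}\geq \ep\,\omega_\cP$ and the bounds in (c) are scale-invariant in $\ep$), I conclude as in the proof of Theorem \ref{prop:2}: if \eqref{pave30} failed with a uniform constant, choose $\ep_i\to\ep_\infty\in[0,+\infty)$ and $L$-valued $(p,0)$-forms $u_i$ orthogonal to the $L^2$-harmonic space with $\|u_i\|_{\omega_{Y,\ep_i}}=1$ and $\|\bar\partial u_i\|_{\omega_{Y,\ep_i}}\to 0$. The Bochner identity combined with the local inequalities provides $W^{1,2}$ bounds on compact subsets of $X_0$, so up to extraction $u_i\to u_\infty$ weakly and in $L^2_{\mathrm{loc}}(X_0)$; the limit is holomorphic, perpendicular to the $L^2$-holomorphic $(p,0)$-forms, and hence vanishes. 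The decomposition $u_i=\chi u_i+(1-\chi)u_i$ with $\chi$ supported near $Y$ then contradicts $\|u_i\|_{\omega_{Y,\ep_i}}=1$, exactly as at the end of the proof of Theorem \ref{prop:2}.

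The main obstacle I anticipate is Step 2: the transition radius between the conic regime $|z_i|^{-(2-2/m)}$ and the Poincaré regime $\ep\,|z_i|^{-2}\log^{-2}|z_i|$ migrates toward $0$ as $\ep\downarrow 0$, so one cannot use either model alone. The proposed remedy is to keep the mixed weight throughout the integration-by-parts and to exploit the arithmetic gap coming from condition (ii) to bound the error terms uniformly; a secondary difficulty in Step 3 is that the inner products vary with $\ep$, which I plan to handle by passing to a fixed reference metric on any compact subset of $X_0$, where all $\omega_{Y,\ep}$ are uniformly equivalent.
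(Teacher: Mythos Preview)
Your overall architecture (uniform local Poincar\'e-type estimate followed by a contradiction argument \`a la Theorem~\ref{prop:2}) matches the paper's, but the way you deploy hypothesis~(ii) is different from the paper, and this leaves a genuine gap in your Step~3.

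In the paper, the local estimate used is the weaker inequality \eqref{pave36}, which carries an extra term $\int_{U\setminus\frac12 U}|u|^2$ on the right-hand side; with this boundary term present, the Lemma~\ref{fourier}-type integration by parts goes through uniformly in~$\ep$ \emph{without} invoking~(ii). The paper instead spends condition~(ii) on an entirely different point, which it calls the ``second fact'': the space of $L^2$-holomorphic $(p,0)$-forms with respect to $(\omega_{Y,\ep},h_L)$ is independent of~$\ep\ge 0$, and for any such form $\Phi$ one has $\|\Phi\|_{\omega_{Y,\ep}}\to\|\Phi\|_{\omega_\cC}$ (this is \eqref{invarspace}, proved via the divisibility \eqref{pave4411} and the floor condition \eqref{pave422}). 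In the contradiction step, the weak limit $u$ is holomorphic and $L^2$ for $\omega_\cC$; by the second fact it is also $L^2$ for each $\omega_{Y,\ep}$, hence $\langle u,u_\ep\rangle_{\omega_{Y,\ep}}=0$ for every~$\ep$, and combining this with \eqref{invarspace} forces $u=0$.

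Your Step~3 asserts that the limit $u_\infty$ is ``perpendicular to the $L^2$-holomorphic $(p,0)$-forms'', but the orthogonality you have is $\langle\,\cdot\,,u_i\rangle_{\omega_{Y,\ep_i}}=0$, with the inner product varying with~$i$. Passing to a fixed reference metric on compact subsets of $X_0$ does not help here, because the pairing is a global integral and the mass of a holomorphic form can concentrate near~$Y$ in an $\ep$-dependent way. Without the ``second fact'' you cannot conclude $u_\infty=0$, and the contradiction does not close. I am also unconvinced by your reading of~(ii) in Step~2: the condition $\lfloor q_i-1/m\rfloor=\lfloor q_i\rfloor$ is a statement about integer parts and does not by itself force the Fourier exponents $k+1$ to stay bounded away from zero; in any case, with the extra boundary term in \eqref{pave36} no such gap is needed for the local estimate.
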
   

\noindent We first prove Theorem \ref{Poinc}, and then derive Theorem \ref{CoHo} as consequence.

\begin{proof}[Proof of Theorem \ref{Poinc}] The case $\ep= 0$ is quite easy and even if the general case does not follows from it, we give here a direct proof. 
	\smallskip
	
	\noindent We argue by contradiction: if \eqref{pave30} does not holds, then there exists a sequence 
	$(u_k)$ of $(p, 0)$-forms orthogonal to $\Ker(\dbar)$ such that 
	\begin{equation}\label{pave31}
		\int_{X_0} |u_k|^2_{\omega_\cC}e^{-\varphi_L}dV_{\omega_\cC}= 1, \qquad \lim_{k\to\infty}\int_{X_0}|\dbar u_k|^2_{\omega_\cC}e^{-\varphi_L}dV_{\omega_\cC}= 0.
	\end{equation}
	\noindent It follows that the weak limit $u_0$ of $(u_k)_{k\geq 1}$ has the following properties
	\begin{equation}\label{pave32}
		\int_{X_0} | u_0|^2_{\omega_\cC}e^{-\varphi_L}dV_{\omega_\cC}\leq 1, \qquad \dbar u_0= 0.
	\end{equation} 
	Since on the other hand $u_0$ is orthogonal to $\Ker(\dbar)$, it follows that $u_0= 0$. We show next that this contradicts the first equality of \eqref{pave31} for $k$ large enough. 
	
	\noindent Let $\Omega$ be a coordinate ball whose origin belongs to $Y$. The restriction 
	$\displaystyle \dbar u_k|_{\Omega\setminus Y}$ is a closed $(p, 1)$-form, but we can also see it as 
	$(n, 1)$-form with values in the vector bundle $\displaystyle \Lambda^{n-p}T_X\otimes L|_{\Omega}$.
	We equip the tangent bundle $\displaystyle E:= T_X|_{\Omega}$ with the standard metric with conic singularities, and $L$ with $e^{-\phi_L}$ where 
	\begin{equation}\label{pave33}
		\phi_L= \varphi_L+ \Vert z\Vert^2+ C\sum |z_i|^{\frac{2}{m_i}}.
	\end{equation}
	In \eqref{pave33} we denote by $\displaystyle (z_i)_{i=1,\dots, n}$ a coordinate system adapted to $Y$. Then the curvature of $E$ is greater than $\omega_\cC\otimes \Id_E$ and by the $L^2$ estimates applied on $\Omega\setminus Y$ we can solve the equation
	\begin{equation}\label{pave34}
		\dbar v_{\Omega, k}= \dbar u_k, \qquad \int_\Omega |v_{\Omega, k}|^2e^{-\varphi_L}\leq C \int_{X_0}|\dbar u_k|^2_{\omega_\cC}e^{-\varphi_L}dV_{\omega_\cC}.
	\end{equation}
	Here we are using the fact that local term added to the global metric of $L$ in \eqref{pave33} is two-sided bounded.
	\smallskip
	
	\noindent We cover $X$ with a finite subset of coordinate subsets $\Omega$, and we construct the forms $v_{\Omega, k}$ as above (if the set $\Omega$ do not intersects the support of the divisor $Y$, then we just use the flat metric). If all the coefficients $q_i$ in (i) are 
	positive it follows that the holomorphic forms $\displaystyle (u_k- v_{\Omega, k})$ are converging to zero weakly, hence strongly as well. Moreover, the $L^2$ norm of each $v_{\Omega, k}$ tends to zero as $k\to\infty$, so for $k$ large enough the first equality in \eqref{pave31} cannot hold.
	The contradiction we obtained ends the proof of the case $\ep= 0$.
	\smallskip
	
	\noindent Now assume that we only have one negative coefficient $q_1< 0$, then we write $q_1= \lfloor q_1\rfloor+ \{q_1\}$ and replace this coefficient with its fractionally part $\{q_1\}$
	in the expression of the metric. Then can argue as above, by considering $\displaystyle z_1^{- \lfloor q_1\rfloor}(u_k- v_{\Omega, k})$.
	\medskip
	
	\noindent The general case (i.e. for an arbitrary $\ep$) is quite similar, except that the we cannot rely on the "correction" forms $v_{\Omega, k}$ for $\Omega\cap Y\neq \emptyset$. Instead we are using the following two facts. 
	
	The first fact is a version of Proposition~\ref{poinc}: let $U$ be a finite union of coordinate subsets of $X$ containing $Y$. There exists a positive constant $C> 0$  (independent of $\ep$) such that for any $L$-valued $(p,0)$ form $u$ with support in $U$ we have the inequality
	\begin{equation}\label{pave36}
		\frac{1}{C}\int_U|u|^2_{\omega_{Y,\ep}}e^{-\varphi_L}dV_{\omega_{Y,\ep}}\leq 
		\int_{U}|\dbar u|^2_{\omega_{Y,\ep}}e^{-\varphi_L}dV_{\omega_{Y,\ep}}+ \int_{U\setminus \frac{1}{2}U}|u|^2_{\omega_{Y,\ep}}e^{-\varphi_L}dV_{\omega_{Y,\ep}}.
	\end{equation}
	We will not prove \eqref{pave36} here, because it follows directly from the
	arguments invoked for Proposition~\ref{poinc}.
	
	The second fact is that the space of $L^2$ holomorphic
	$p$-forms is independent of $\ep\geq 0$.  
	Let $\Phi$ be a holomorphic $p$--form on $X_0$  with values in $L$. 
	If $\Phi$ is $L^2$ with respect to $(\omega_{Y, \ep}, h_L)$, it is also $L^2$ integrable with respect to $(\omega_{Y}, h_L)$ because of the inequality
	\begin{equation}
		\Vert \Phi\Vert_{\omega_{Y, \ep}}^2\geq \Vert \Phi\Vert_{\omega_{Y}}^2.
	\end{equation}
	We next assume that $\Phi$ is $L^2$ with respect to $(\omega_{Y}, h_L)$. In local coordinates this writes as 
	\begin{equation}
		\int_\Omega\sum|\Phi_I(z)|^2\prod_{j\not\in I} \lambda_j(z) e^{-\varphi_L(z)}d\lambda(z)< \infty
	\end{equation}
	where $\lambda_j$ are the eigenvalues of $\omega_\cC$ with respect to the Euclidean metric on $\big(\Omega, (z_1,\dots, z_n)\big)$. We take the coordinates $z$ so that they are adapted to $(X, Y)$, i.e. $\Omega\cap Y= z_1\dots z_r= 0$.  
	It follows that the holomorphic functions $\Phi_I$ are divisible by
	\begin{equation}\label{pave4411}
		\prod_{i\leq r, i\in I} z_i^{\lfloor q_i\rfloor}
		\prod_{i\leq r, i\not\in I} z_i^{\lfloor 1+ q_i-\frac{1}{m}\rfloor}
	\end{equation}
	cf. notations at the beginning.
	Recall that we have chosen $m$ so that
	\begin{equation}\label{pave422}
		\left\lfloor q_i-\frac{1}{m}\right\rfloor= \left\lfloor q_i\right\rfloor,
	\end{equation}
	for each non-integer coefficient $q_i$ which appears in the expression of the metric $h_L$.
	
	\noindent So summing up we see that by the careful choice of the singularities of the metric $\omega_\cC$ cf. $(ii)$,  for any holomorphic form $\Phi$ satisfying $\Vert \Phi\Vert_{\omega_\cC} ^2< \infty$, we have
	\begin{equation}\label{invarspace}
		\Vert \Phi\Vert_{\omega_{Y, \ep}}^2< \infty \qquad\text{and} \qquad
		\lim_{\ep \rightarrow 0 } \Vert \Phi\Vert_{\omega_{Y, \ep}}^2 = \Vert \Phi\Vert_{\omega_\cC}^2 .
	\end{equation}
	
	The rest of the proof is absolutely similar (i.e. by contradiction) to the case
	of Theorem \ref{prop:2}: we suppose by contradiction that there exists a sequence $u_\ep$ such that $\|u_\ep\|_{\omega_{Y,\ep}}=1$, $u_\ep$ is orthogonal to the space of $L^2$-holomorphic forms with respect to $\omega_{Y, \ep}$ and 
	$\lim_{\ep \rightarrow 0} \|\dbar u_\ep\|_{\omega_{Y,\ep}} =0$. By passing to some subsequence, $u_\ep$ converges weakly to some $(p,0)$-form $u$ on $X_0$. Then $\|u\|_{\omega_{\mathcal{C}}} \leq 1$ and $\dbar u =0$. Therefore $u$ is a holomorphic $L^2$-form on $X$. Since $u_\ep$ is orthogonal to the holomorphic forms, we have $\int_X \langle u, u_\ep\rangle_{\omega_{Y,\ep}} =0$. Together with \eqref{invarspace}, we obtain $u =0$. Then we obtain a contradiction by the same arguments as in Theorem \ref{prop:2} combined with \eqref{pave36}.
\end{proof}
\medskip

\noindent We prove next Theorem \ref{CoHo}.

\begin{proof} Let $\xi$ be any $L^2$-form of type $(n,q)$ with values in $(L, h_L)$ (with respect to the metric $\omega_{\cC}$). Then we also have 
	$\displaystyle \Vert\xi\Vert_{\omega_{Y, \ep}}< \infty$ and thus we write
	\begin{equation}\label{pave37}
		\xi= \xi_\ep+ \dbar(\eta_\ep)+ \dbar^\star(\tau_\ep),
	\end{equation}
	where $\xi_\ep$ is harmonic, $\eta_\ep$ is orthogonal to $\Ker(\dbar)$ and $\tau_\ep$ orthogonal on $\Ker(\dbar^\star)$. We have 
	\begin{equation}\label{pave38}
		\Vert \xi_\ep\Vert^2 _{\omega_{Y, \ep}}\leq \Vert \xi\Vert^2 _{\omega_{Y, \ep}}\leq \Vert \xi\Vert^2 _{\omega_{Y}}
	\end{equation}
	and therefore the limit of $(\xi_\ep)_{\ep> 0}$ will be harmonic with respect to
	$\omega_\cC, h_L$. Moreover, thanks to Theorem \ref{Poinc}, we have 
	\begin{equation}\label{pave39}
		\Vert \eta_\ep\Vert^2 _{\omega_{Y, \ep}}\leq C \Vert \dbar \eta_\ep\Vert^2 _{\omega_{Y, \ep}}\leq 
		C \Vert \xi\Vert^2 _{\omega_{Y}}
	\end{equation}
	and thus we can extract a limit of $(\eta_\ep)_{\ep> 0}$ on compacts of $X_0$.
	
	\bigskip
	
	\noindent In order to deal with the family $(\tau_\ep)_{\ep> 0}$, thanks to the uniform estimate of Theorem \ref{Poinc}, the proof of Corollary \ref{usecor} applies. We get thus a constant $C> 0$ so that 
	\begin{equation}\label{pave43}
		\Vert \tau_\ep\Vert^2_{\omega_{Y,\ep}}\leq C\Vert \dbar^\star(\tau_\ep)\Vert^2_{\omega_{Y,\ep}}
	\end{equation}
	and the RHS of \eqref{pave43} is bounded uniformly with respect to $\ep$. We can therefore take the limit $\ep\to 0$ and we get $\xi_0$ harmonic, $\eta_0\in \Dom(\dbar)$ and $\tau_0\in \Dom(\dbar^\star)$ such that
	\begin{equation}\label{pave44}
		\xi= \xi_0+ \dbar\eta_0+ \dbar^\star\!(\tau_0) .
	\end{equation}
	
	\medskip
	
	To prove that $\mathcal{H}_{n,q} (X_0, L), \Imm \dbar$ and $\Imm \dbar ^*$ are orthogonal with each other, we use the partition of unity and the local covering. For example, let $\xi_0 \in \mathcal{H}_{n,q} (X_0, L)$ and $\eta_0 \in L^2$. Let $\theta_i$ be a partition of unity. We have
	$$\int_{X_0} \langle  \xi_0, \dbar \eta_0 \rangle =\sum_i \int_{X_0} \langle  \xi_0,  \dbar (\theta_i\eta_0) \rangle .$$
	Let $\pi_i : V_i \rightarrow U_i$ be a local ramified cover such that $\pi_i ^* \omega_\cC$ is smooth and
	$Supp \theta_i \Subset U_i$. Then 
	$$\int_{X_0} \langle  \xi_0,  \dbar (\theta_i\eta_0) \rangle  = 
	\int_{U_i } \langle  \xi_0,  \dbar (\theta_i\eta_0) \rangle = \frac{1}{r} \int_{V_i }
	\langle  \pi_i ^*\xi_0,  \dbar (\pi_i ^* (\theta_i\eta_0) ) \rangle_{\pi_i ^* \omega_\cC} =0 ,$$
	where the last equality comes from the fact that $\pi_i ^* \omega_\cC$ is smooth on $V_i$ and 
	$Supp \theta_i \Subset V_i$.
	
	\medskip
	
	To conclude the proof, it remains to check \eqref{diffdep}. Let $\xi \in \mathcal{H}_{n,q} (X_0, L)$ (with respect to the conic metric). Then $\xi$ is still $L^2$ with respect to $\omega_{Y,\ep}$ for every $\ep$. We consider the Hodge decomposition of $\xi$ with respect to $\omega_{Y,\ep}$: 
	$$\xi = \xi_{\ep} +\dbar \eta_\ep + \dbar^{\star_\ep}\tau_\ep .$$
	Then $\xi_{\ep}$ is harmonic with respect to the complete metric $\omega_{Y,\ep}$.
	In particular, by using Bochner, we know that 
	\begin{equation}\label{diff}
		\dbar \xi_{\ep} = \dbar^{\star_\ep} \xi_{\ep} = (D')^{\star_\ep} \xi_{\ep} =0 .
	\end{equation}
	By passing to some subsequence, the above argument implies that $\xi_{\ep} , \eta_\ep$ and $\tau_\ep$ converge to some $\xi_0, \eta_0$ and $\tau_0$ which are $L^2$ with respect to $\omega_\cC$ and we have 
	$$\xi = \xi_0 + \dbar  \eta_0 + \dbar^\star \tau_0 .$$
	As we proved that $\mathcal{H}_{n,q} (X_0, L), \Imm \dbar$ and $\Imm \dbar ^*$ are orthogonal with each other, we obtain that $\xi =\xi_0$. Together with \eqref{diff}, \eqref{diffdep} is proved.
\end{proof}

\begin{remark}{\rm The decomposition 
		$$L^2 _{n,q} = \Ker \Delta'' \oplus \Imm \Delta'' $$
		with respect to the Poincaré type/conic metrics, still holds, as we will see next. Given our previous results, all we need to prove is the image of $\Delta''$
		is closed.
		
		\noindent We prove it for the Poincaré case, and the conic case is similar.
		If (by contradiction) this is not true, then there exists a sequence of smooth $f_i$ of compact support such that
		\begin{equation}\label{van11}
			\|f_i\|^2 _{L^2} =1, \qquad \|\Delta'' f_i\|^2 _{L^2}  \rightarrow 0.
		\end{equation}
		Moreover, each of the forms $f_i$ is orthogonal to the space of harmonic forms, i.e. to $\Ker (\dbar)$ and $\Ker(\dbar^\star)$.
		\smallskip
		
		\noindent As consequence of \eqref{van11} we have
		\begin{equation}\label{van12}
			\langle f_i, \Delta'' f_i \rangle \rightarrow 0
		\end{equation}
		as $i\to\infty$. On the other hand, we have $\langle f_i, \Delta'' f_i \rangle= \|\dbar f_i\|^2 + \|\dbar^* f_i\|^2$, and this leads to a contradiction as follows. We can write
		\begin{equation}\label{van13}
			f_i= \dbar \alpha_i+ \dbar^\star\beta_i,
		\end{equation}
		according to Theorem \ref{Hodge}. Moreover, it follows from the proof of  Theorem \ref{Hodge} that
		\begin{equation}\label{van14}
			\|f_i- \dbar \alpha_i\|^2\leq C\|\dbar f_i\|^2, \qquad  \|f_i- \dbar^\star \beta_i\|^2\leq C\|\dbar^\star f_i\|^2
		\end{equation}
		since $f_i- \dbar \alpha_i$ is orthogonal to $\Ker(\dbar)$ and $f_i- \dbar^\star \beta_i$ is orthogonal to $\Ker(\dbar^\star)$. Adding the two inequalities shows that
		\begin{equation}\label{van15}
			\|f_i\|^2\leq C(\|\dbar f_i\|^2+ \|\dbar^\star f_i\|^2),
		\end{equation}
		which is impossible as $i\to\infty$.}
\end{remark}
\subsection{Hodge decomposition for metrics with conic singularities, II}
The setting here will be the same as in the previous subsection \ref{subsectconic}. 
We will use systematically the ramified covers corresponding  to the orbifold structure
\begin{equation}\label{orbi1}
	\Delta:= \sum_{i\in I}\left(1-\frac{1}{m}\right)Y_i
\end{equation}
in order to establish a generalization of Theorem \ref{CoHo}. 

\medskip

\noindent Let $\displaystyle (V_i, z_i)_{i\in I}$ be a finite cover of $X$ with coordinate charts such that
\begin{equation}\label{deck2}
	z_i^1\dots z_i^r= 0
\end{equation}
is the local equation of the divisor $Y$ when restricted to the
set $V_i$. We then consider the local ramified maps
\begin{equation}\label{deck3}
	\pi_i: U_i\to V_i, \qquad \pi_i(w_i^1,\dots, w_i^n):=
	\left((w_i^1)^{m},\dots, (w_i^r)^{m}, w_i^{r+1},\dots w_i^n\right) 
\end{equation}
which define the orbifold structure corresponding to $(X, \Delta)$.
\medskip

\noindent We first define the analog of smooth forms in our geometric context.

\begin{defn}\label{forms}
	Let $\phi$ be a $\cC^\infty$ form of $(p, q)$--type with values in $L$ defined on $X_0:= X\setminus Y$. We say that $\phi$ is smooth in conic
	sense if the quotient of the local inverse images 
	\begin{equation}\label{deck4}
		\phi_i:= \frac{1}{ w_i^{qm}}\pi_i^\star(\phi|_{V_i})
	\end{equation}  
	admit a $\cC^\infty$-extension to $U_i$. In order to simplify the writing, in the equality above we are using the notation
	$w_i^{qm}:= (w_i^1)^{q_1 m} \cdots (w_i^r)^{q_r m }$.    
\end{defn}
\medskip

\noindent In the absence of the twisting bundle $L$, this corresponds to the usual definition of ``orbifold differential forms''. Here in some sense the idea is the same, except that we take into account the singularities of the metric $h_L$ of $L$. In what follows we will establish a few simple
properties.

\begin{proposition}
	Let $\phi$ be a smooth $(n,q)$-form on $X$ with value in $L$ in the standard sense, and we suppose that $q_i< 1$ for every $i$. Then $\phi$ is smooth in conic sense.
\end{proposition}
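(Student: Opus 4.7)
The plan is to verify the smoothness in conic sense by a direct computation in the orbifold charts of Definition~\ref{forms}. Fix $i\in I$ and, to lighten notation, drop the subscript $i$ throughout. Choose coordinates $z=(z_1,\dots,z_n)$ on $V$ adapted to $Y$ so that $Y\cap V=(z_1\cdots z_r=0)$, and pick a local trivialization of $L$ over $V$. In this trivialization the smooth $(n,q)$-form $\phi$ reads
\[
\phi|_V=\sum_{|J|=q}f_J\, dz^{[n]}\wedge d\bar z^J,\qquad dz^{[n]}:=dz_1\wedge\cdots\wedge dz_n,
\]
with each $f_J$ a smooth $\mathbb{C}$-valued function on $V$; for each multi-index $J$ set $J_1:=J\cap\{1,\dots,r\}$.

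The next step is to compute the pull-back of $\phi$ under $\pi(w)=(w_1^m,\dots,w_r^m,w_{r+1},\dots,w_n)$. From $\pi^\star dz_\alpha=m\, w_\alpha^{m-1}dw_\alpha$ for $\alpha\le r$ (and $\pi^\star dz_\alpha=dw_\alpha$ for $\alpha>r$), together with the analogous formulas for the conjugates, a direct computation yields
\[
\pi^\star\bigl(dz^{[n]}\wedge d\bar z^J\bigr)=m^{r+|J_1|}\Bigl(\prod_{\alpha=1}^{r}w_\alpha^{m-1}\Bigr)\Bigl(\prod_{\alpha\in J_1}\bar w_\alpha^{m-1}\Bigr)\,dw^{[n]}\wedge d\bar w^J.
\]

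The heart of the argument is then to divide by $w^{qm}=\prod_{\alpha=1}^{r}w_\alpha^{q_\alpha m}$ and to check that every remaining exponent is nonnegative. The exponent of $w_\alpha$ (for $\alpha\le r$) in $\pi^\star\phi/w^{qm}$ is $m-1-q_\alpha m$; hypothesis (ii) of the setting gives $mq_\alpha\in\mathbb{Z}$, and the assumption $q_\alpha<1$ yields $mq_\alpha<m$, whence $mq_\alpha\le m-1$ and $m-1-q_\alpha m\ge 0$. The cases $q_\alpha\le 0$ are trivial (the ``division'' is then really a multiplication by a holomorphic monomial), and the $\bar w_\alpha$-exponent $m-1$ is obviously nonnegative. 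Combining these facts with the smoothness of $f_J\circ \pi$ on $U$, one concludes that
\[
\phi_i=m^{r+|J_1|}\sum_J(f_J\circ \pi)\prod_{\alpha=1}^{r}w_\alpha^{m-1-q_\alpha m}\prod_{\alpha\in J_1}\bar w_\alpha^{m-1}\,dw^{[n]}\wedge d\bar w^J
\]
extends smoothly to $U$. No genuine obstacle arises: the only arithmetic point to verify is the inequality $mq_\alpha\le m-1$, which is an immediate consequence of $q_\alpha<1$ combined with $mq_\alpha\in\mathbb{Z}$, and which would fail precisely at the borderline case $q_\alpha=1$ excluded by hypothesis.
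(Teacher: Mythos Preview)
Your proof is correct and follows exactly the approach of the paper, which simply remarks that $\dfrac{d(z^m)}{z^{qm}}=m\,z^{m-1-qm}\,dz$ is smooth because $qm\in\mathbb Z$ and $q<1$ force $qm\le m-1$; you have just spelled this out coordinate by coordinate. The only cosmetic slip is that the factor $m^{r+|J_1|}$ depends on $J$ through $J_1$ and should sit inside the sum.
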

\begin{proof}
	The matter is immediate, via a local calculation. This is based on the fact that
	$\displaystyle \frac{d(z^m)}{z^{qm}}$
	is smooth (recall that $qm$ is an integer).  
\end{proof}  

\begin{remark}{\rm 
		Let $\phi$ be a smooth $L$-valued $(p,q)$-form on $X$. Even if $q_i \in ]0,1[$ for every $i$, it is possible that $\phi$ is not smooth in conic sense.  But we will show later that it defines a current in conic sense.}
\end{remark}
\medskip

\noindent The following statement establishes a correspondence between the
intrinsic differential operators associated to the data $(X, \omega_\cC)$ and $(L, h_L)$ and the local ones.
\begin{proposition}\label{orbderiv}
	Let $\phi$ be a  $L$-valued $(p,q)$-form, smooth in orbifold sense.  
	Then its "natural'' derivatives $D' _{h_L}, D_{h_L} ^\star, \cdots $ are also smooth in the orbifold sense. Moreover, we have:
\begin{enumerate}
\smallskip
		
\item[\rm (1)] $\sup_{X\setminus Y}|\phi|_{h_L, \omega_\cC}< \infty$, i.e. forms which are smooth in conic sense are bounded.       

\smallskip
		
\item[\rm (2)] The following equalities hold true
$$
\pi_i ^{\star}(D' \phi) = w_i^{qm} D' \phi_i ; \qquad \pi_i ^\star(\dbar \phi) = w_i^{qm} \dbar \phi_i,$$
as well as 
$$ \pi_i ^{\star}({D'}^{\star} \phi) = w_i^{qm} {D'}^\star \phi_i ; \qquad   
\pi_i ^{\star}(\dbar^{\star} \phi) = w_i^{qm} \dbar^{\star} \phi_i$$	
where the $D'$ on the LHS is $\displaystyle D'_{h_L}$ and the rest of the notations are $D'\psi:= \partial \psi-\partial (\varphi_{L, 0}\circ\pi_i)\wedge \psi$ and $\varphi_L = \sum q_i  \log |z_i|^2+ \varphi_{L, 0}$.
		
		\item[\rm (3)] Let $\Delta'': =[\dbar, 	\dbar^{\star}]$ be the Laplace with respect to $(\omega_\cC, h_L)$. Then we have 
		$$
		\pi_i ^{\star}	\Delta'' \phi = w_i^{qm} \cdot \Delta''_{sm} \phi_i,$$
		where $\Delta''_{sm} $ is the Laplace for the local, non-singular setting $(\pi_i ^\star \omega_\cC, \varphi_{L, 0}\circ\pi_i)$
	\end{enumerate}
\end{proposition}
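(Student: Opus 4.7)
\emph{Proof plan.} I would reduce each assertion to a statement about smooth objects on the orbifold chart $U_i$ via a single holomorphic gauge change. If $e_L$ is a local holomorphic frame of $L$ on $V_i$, then $e_0 := w_i^{qm}\,\pi_i^\star e_L$ is a holomorphic frame of $\pi_i^\star L$ on $U_i$ whose squared norm
$$|e_0|^2_{\pi_i^\star h_L} \;=\; |w_i^{qm}|^2\cdot e^{-\pi_i^\star \varphi_L} \;=\; e^{-\varphi_{L,0}\circ \pi_i}$$
is smooth on $U_i$. Thus, in the frame $e_0$ the pullback $\pi_i^\star\phi=w_i^{qm}\phi_i$ is represented simply by $\phi_i$, and the singular Hermitian datum $(\pi_i^\star h_L,\pi_i^\star \omega_\cC)$ on $U_i\setminus B$ (with $B:=\pi_i^{-1}(Y)$) is identified by this change of frame with the genuinely smooth datum $(h_0,\pi_i^\star\omega_\cC)$ on all of $U_i$; recall that $\pi_i^\star\omega_\cC$ extends smoothly across $B$ by definition of the orbifold structure.

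Given this dictionary, (1) is immediate: one has $|\phi|^2_{h_L,\omega_\cC}\circ\pi_i=|\phi_i|^2_{h_0,\pi_i^\star\omega_\cC}$, and the right-hand side is smooth on the compact $U_i$, hence bounded; a finite subcover of $X$ gives the global bound. For the $\dbar$-identity in (2), since $w_i^{qm}$ is holomorphic one computes on $U_i\setminus B$
$$\pi_i^\star(\dbar\phi)=\dbar(w_i^{qm}\phi_i)=w_i^{qm}\dbar\phi_i,$$
and the right-hand side extends smoothly across $B$. For $D'_{h_L}$, I would use $\pi_i^\star\varphi_L=\log|w_i^{qm}|^2+\varphi_{L,0}\circ\pi_i$ together with $\partial w_i^{qm}/w_i^{qm}=\partial \log|w_i^{qm}|^2$, so that the singular terms cancel in
$$\partial(w_i^{qm}\phi_i)-\partial(\pi_i^\star\varphi_L)\wedge w_i^{qm}\phi_i \;=\; w_i^{qm}\bigl(\partial\phi_i-\partial(\varphi_{L,0}\circ\pi_i)\wedge\phi_i\bigr)\;=\;w_i^{qm}D'\phi_i.$$

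For the formal adjoints in (2), the plan is to invoke naturality of the formal adjoint under the gauge change $\pi_i^\star e_L\mapsto e_0$, together with the fact that $\pi_i$ is a local isometry on the smooth locus. Under the identification above, $\pi_i^\star(\dbar^\star\phi)$, when expressed back in the frame $\pi_i^\star e_L$, equals $w_i^{qm}$ times $\dbar^\star_{sm}\phi_i$, the adjoint computed in the smooth setup $(h_0,\pi_i^\star\omega_\cC)$; the same argument applies verbatim to ${D'}^\star$, and in both cases the right-hand side is smooth on $U_i$. Finally, (3) follows by iterating (2): since $\dbar\phi$ and $\dbar^\star\phi$ are themselves smooth in conic sense with the stated transformation rule, a second application of (2) gives
$$\pi_i^\star(\dbar\dbar^\star\phi)=w_i^{qm}\dbar\dbar^\star_{sm}\phi_i,\qquad \pi_i^\star(\dbar^\star\dbar\phi)=w_i^{qm}\dbar^\star_{sm}\dbar\phi_i,$$
and summing yields $\pi_i^\star(\Delta''\phi)=w_i^{qm}\Delta''_{sm}\phi_i$. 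The main difficulty will be the identity for $\dbar^\star$ in (2): unlike $\dbar$ it depends on both metrics and is not a derivation, so a direct computation is unwieldy, and it is the gauge-change principle -- converting the singular datum on $V_i$ into the smooth datum on $U_i$ -- that makes the adjoint identity fall out cleanly.
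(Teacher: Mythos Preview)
Your argument is correct and coincides with the paper's approach: the paper carries out only the $D'_{h_L}$ computation explicitly (identical to yours) and declares the remaining identities ``completely analogue'', while your gauge-change framing via $e_0=w_i^{qm}\pi_i^\star e_L$ is just a cleaner way to package the same cancellation and makes the adjoint and Laplacian cases more transparent.
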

\begin{proof} The first inequality is clear; for the rest we will only discuss the first equality of (2), since the verification of all the others is completely analogue. Also, we will drop the index $i$ in order to simplify the notations. By definition, locally on $V$  we have
	\begin{equation}
		D' _{h_L} \phi= \partial \phi- \Big(\partial \varphi_{L, 0}+ \sum q_\alpha \frac{dz_\alpha}{z_\alpha}\Big)\wedge \phi
	\end{equation}
	and its pull-back to $U$ via the map $\pi$ is equal to
	\begin{equation}
		\pi^\star(D' _{h_L} \phi)= \partial \pi^\star \phi- \Big(\partial \varphi_{L, 0}\circ\pi+ m\sum q_\alpha \frac{dw_\alpha}{w_\alpha}\Big)\wedge \pi^\star\phi.
	\end{equation}
	The equality
	\begin{equation}
		\frac{1}{w^{mq}}\pi^\star(D' _{h_L} \phi)= \partial \frac{\pi^\star \phi}{w^{mq}}- \partial (\varphi_{L, 0}\circ\pi)\wedge \frac{\pi^\star\phi}{w^{mq}}
	\end{equation}
	follows immediately and the RHS equals precisely $\displaystyle D'\Big(\frac{1}{w^{qm}}\pi^{\star}\phi\Big)$.
\end{proof}

\begin{exemple}{\rm
The terminology we are using could be sometimes misleading. Let $L$ be the trivial bundle 
over $X=\mathbb D$, endowed with the metric $\varphi_L = - \frac{1}{2}\ln |z|$.
Then the form $\displaystyle \frac{dz}{z}$ defined over the pointed disk is smooth in the sense of our Definition \ref{forms}. Moreover, its $\dbar$ is also smooth in 
conic sense: the requirement is that  $\displaystyle w\dbar\Big(\frac{dw}{w}|_{\mathbb D^\star}\Big)$ 
extends over the disk and this is obviously the case. 
}\end{exemple}	
\medskip

\noindent A first consequence of the statement \ref{orbderiv} is that forms which are smooth in conic sense behave well by integration by parts.
\begin{cor}\label{int}
  Let $\alpha$ and $\beta$ be an $L$-valued $(p,q)$-form and an $L^\star$-valued $(n-p-1, n-q)$ form, respectively, both smooth in conic sense.
Then the usual integration by parts formula holds true
\begin{equation}\label{van40}
\int_XD'_{h_L}\alpha\wedge\beta=
(-1)^{p+q+1}\int_X\alpha\wedge D'_{h_L^\star}\beta.
\end{equation}  
\end{cor}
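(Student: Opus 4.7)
The plan is to reduce the identity to the classical integration-by-parts formula for smooth compactly supported forms via the local orbifold covers $\pi_i\colon U_i\to V_i$ introduced in \eqref{deck3}, using a partition of unity on $X$.

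First I would choose a finite cover of $X$ by the coordinate charts $(V_i, z_i)$ of \eqref{deck2}, pick a smooth partition of unity $(\theta_i)$ subordinate to this cover, and (by bilinearity) reduce to proving the identity with $\alpha$ replaced by $\theta_i\alpha$ for each $i$. Thus we may assume $\alpha$ is supported in a single chart $V_i$; drop the index $i$ to lighten notation and write $\pi\colon U\to V$ for the ramified cover of degree $m^r$. The identity reduces to
\begin{equation}\nonumber
\int_V D'_{h_L}\alpha\wedge\beta=(-1)^{p+q+1}\int_V\alpha\wedge D'_{h_L^\star}\beta.
\end{equation}

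Second, I would pull back both sides via $\pi$. By Definition~\ref{forms} applied to $\alpha$ (with weight $\varphi_L$) and to $\beta$ (with weight $-\varphi_L$, since $\beta$ is $L^\star$-valued), the forms
\begin{equation}\nonumber
\alpha_U:=w^{-qm}\pi^{\star}\alpha,\qquad \beta_U:=w^{qm}\pi^{\star}\beta
\end{equation}
extend smoothly across $\pi^{-1}(Y)$ to $U$. The exponents are opposite, so in the wedge product the monomial factors cancel:
\begin{equation}\nonumber
\pi^{\star}(\alpha\wedge\beta)=\alpha_U\wedge\beta_U,
\end{equation}
and by Proposition~\ref{orbderiv}(2) the same cancellation gives
\begin{equation}\nonumber
\pi^{\star}(D'_{h_L}\alpha\wedge\beta)=D'\alpha_U\wedge\beta_U,\qquad \pi^{\star}(\alpha\wedge D'_{h_L^\star}\beta)=\alpha_U\wedge D'\beta_U,
\end{equation}
where $D'$ on the right is the smooth Chern-type operator associated with the smooth local weight $\varphi_{L,0}\circ\pi$. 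Since $\pi$ has degree $m^r$ and the integrands are bounded (Proposition~\ref{orbderiv}(1)), one has $\int_V\eta=\frac{1}{m^r}\int_U\pi^{\star}\eta$ for the relevant top forms.

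Third, I would conclude by the classical Stokes/integration-by-parts identity on $U$, applied to the smooth forms $\alpha_U$ and $\beta_U$: the form $\pi^{\star}\theta_i\cdot\alpha_U$ has compact support in $U$, and the product rule together with the weighted Leibniz identity yields
\begin{equation}\nonumber
\int_U D'(\pi^{\star}\theta_i\cdot\alpha_U)\wedge\beta_U=(-1)^{p+q+1}\int_U \pi^{\star}\theta_i\cdot\alpha_U\wedge D'\beta_U,
\end{equation}
since $d(e^{-\varphi_{L,0}\circ\pi}\alpha_U\wedge\beta_U)$ integrates to zero over $U$. Summing over $i$ and dividing by $m^r$ gives \eqref{van40}. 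The main (very mild) obstacle is bookkeeping: one must check that the bundle factors $w^{\pm qm}$ indeed cancel in every pairing, and that the partition of unity arguments work chart by chart — both are direct consequences of Definition~\ref{forms} and Proposition~\ref{orbderiv}, so no genuinely new estimate is required beyond the smoothness of the lifted forms on $U$.
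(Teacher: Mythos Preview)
Your argument is correct, but it follows a different route from the paper. The paper's proof does not pass to the orbifold covers at all: it multiplies $\alpha$ by the family of truncation functions $(\mu_\ep)_{\ep>0}$ from Lemma~\ref{cutoff}, uses the obvious identity \eqref{van40} for $\mu_\ep\alpha$ (which has compact support in $X_0$), and then lets $\ep\to 0$. The limit is justified by Lebesgue dominated convergence, since Proposition~\ref{orbderiv}(1) gives uniform bounds on $|\alpha|$, $|D'_{h_L}\alpha|$, $|\beta|$, $|D'_{h_L^\star}\beta|$; the cross term $\int_X\partial\mu_\ep\wedge\alpha\wedge\beta$ is controlled by $\int_X|\partial\mu_\ep|\,dV_\cC\to 0$.

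Your approach instead localises via a partition of unity and lifts to the smooth picture on $U_i$, where the identity becomes the classical Stokes formula for compactly supported smooth forms. This is the same mechanism the paper uses elsewhere (e.g.\ in the orthogonality argument inside the proof of Theorem~\ref{CoHo} and in Corollary~\ref{smoothn}), so it is entirely in the spirit of the section. The trade-off: the cut-off argument is shorter and only needs boundedness, while your cover argument makes the underlying smoothness more explicit and avoids any limit in $\ep$. One small correction: the form that integrates to zero on $U$ is simply $\partial(\alpha_U\wedge\beta_U)$, not $d(e^{-\varphi_{L,0}\circ\pi}\alpha_U\wedge\beta_U)$ --- the pairing $L\otimes L^\star\to\mathbb{C}$ carries no metric weight, and the $\partial\varphi_{L,0}$-terms in $D'\alpha_U\wedge\beta_U$ and $(-1)^{p+q}\alpha_U\wedge D'_\star\beta_U$ cancel directly (with $D'_\star$ the dual operator, weight $-\varphi_{L,0}\circ\pi$). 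With that adjustment the argument goes through.
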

\begin{proof}
  Let $(\mu_\ep)_{\ep>0}$ be the family of truncation functions corresponding to $Y$. We obviously have
\begin{equation}\label{van41}
\int_XD'_{h_L}(\mu_\ep\alpha)\wedge\beta=
(-1)^{p+q+1}\int_X\mu_\ep\alpha\wedge D'_{h_L^\star}\beta
\end{equation}
for each positive $\ep$. The integrals $\displaystyle \int_X\mu_\ep\alpha\wedge D'_{h_L^\star}\beta$ and $\displaystyle \int_X\mu_\ep D'_{h_L}\alpha\wedge\beta$
are converging to the expected value by Lebesgue theorem, since
\begin{equation}\label{van42}
\sup_{X\setminus Y}(|D'_{h_L}\alpha|+|\alpha|+ |D'_{h_L^\star}\beta|+|\beta|)< \infty
\end{equation}
(cf. Proposition \ref{orbderiv} above). The remaining term is bounded by
\begin{equation}\label{van43}
\int_X|\partial \mu_\ep| dV_\cC
\end{equation}
up to a uniform constant --because of \eqref{van42}-- and this converges to zero
as $\ep\to 0$.
\end{proof}

\medskip

\noindent The following corollary is very useful.
\begin{cor}\label{smoothn}
	Let $\phi$ be a $L$-valued $L^2$-form on $X$ such that $\Delta''(\phi) =\psi$ holds in the sense of currents on $X$ for some form $\psi$ smooth in conic sense. Then $\phi$ is smooth in conic sense.
\end{cor}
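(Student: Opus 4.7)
The natural strategy is to transfer the problem to the uniformising orbifold covers $\pi_i: U_i \to V_i$ of \eqref{deck3}, where the pullback $\pi_i^\star \omega_\cC$ and the residual weight $\varphi_{L,0}\circ \pi_i$ are both smooth, so that classical elliptic regularity can be brought to bear. Set
$$
\phi_i := \frac{1}{w_i^{qm}}\,\pi_i^\star(\phi|_{V_i}), \qquad \psi_i := \frac{1}{w_i^{qm}}\,\pi_i^\star(\psi|_{V_i}),
$$
so that $\psi_i$ extends to a smooth $L$-valued form on all of $U_i$ by the hypothesis on $\psi$, while the goal becomes to show that $\phi_i$ admits a $\cC^\infty$ extension across the branch divisor $D_i := \{w_i^1\cdots w_i^r = 0\}$.

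Writing $\varphi_L = \sum q_\alpha \log|z_\alpha|^2 + \varphi_{L,0}$, a direct computation yields $\pi_i^\star(|\phi|^2_{h_L}\, dV_{\omega_\cC}) \simeq |\phi_i|^2 e^{-\varphi_{L,0}\circ \pi_i}\, dV_{\pi_i^\star \omega_\cC}$, where the latter is equipped with the smooth data on $U_i$. Combined with the global $L^2$-hypothesis on $\phi$, this shows $\phi_i \in L^2_{loc}(U_i)$. By Proposition \ref{orbderiv}(3), the transformation formula $\pi_i^\star(\Delta''\phi) = w_i^{qm}\,\Delta''_{sm}\phi_i$ holds on $U_i \setminus D_i$, where $\Delta''_{sm}$ is the Hodge Laplacian associated to the smooth pair $(\pi_i^\star \omega_\cC, \varphi_{L,0}\circ \pi_i)$. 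The hypothesis $\Delta''\phi = \psi$ thus yields the distributional equation $\Delta''_{sm}\phi_i = \psi_i$ on $U_i \setminus D_i$.

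The crucial step is to upgrade this to an equality of currents on all of $U_i$. The difference $T := \Delta''_{sm}\phi_i - \psi_i$ is an a priori distribution on $U_i$ supported on the complex hypersurface $D_i$. I would test $T$ against an arbitrary smooth compactly supported form $\eta$ on $U_i$ using cut-offs $\chi_\delta$ of the log--log type from Lemma \ref{cutoff} (of the form $\rho_\delta(\log\log(1/|f|^2))$ with $f$ a local equation of $D_i$), vanishing in a neighbourhood of $D_i$, equal to $1$ outside a slightly larger one, and, crucially, satisfying $\|\partial \chi_\delta\|_{L^2}, \|\ddbar\chi_\delta\|_{L^2} \to 0$ as $\delta\to 0$ in the smooth metric on $U_i$. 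Writing $\langle T, \eta\rangle = \langle T, \chi_\delta\eta\rangle + \langle T, (1-\chi_\delta)\eta\rangle$, the first term vanishes since $\chi_\delta\eta$ is supported in $U_i \setminus D_i$, and the second is treated via $\langle \phi_i, \Delta''_{sm}((1-\chi_\delta)\eta)\rangle - \langle \psi_i, (1-\chi_\delta)\eta\rangle$, expanded by Leibniz; the main term goes to zero by dominated convergence and the remaining error terms are controlled by Cauchy--Schwarz using $\phi_i \in L^2_{loc}$ and the estimates on $\chi_\delta$.

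Once $\Delta''_{sm}\phi_i = \psi_i$ is established as currents on $U_i$, one has a distributional solution of an elliptic PDE with smooth coefficients and smooth right-hand side, so classical interior elliptic regularity forces $\phi_i \in \cC^\infty(U_i)$. Pushing back to $V_i$, this says exactly that $\phi$ is smooth in conic sense in the sense of Definition \ref{forms}. The main obstacle is the removable singularity step: it sits at the borderline real codimension $2$, so a power-type cut-off would give only $\|\partial\chi_\delta\|_{L^2} = O(1)$ and would not suffice; the logarithmic construction of \eqref{cut1} is essential to make the error terms vanish in the limit. The rest is standard smooth elliptic PDE theory.
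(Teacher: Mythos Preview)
Your overall strategy---pulling back to the uniformising cover $U_i$ and invoking elliptic regularity for the smooth Laplacian $\Delta''_{sm}$---is exactly right, and coincides with the paper's. The difficulty is entirely concentrated in the step where you extend $\Delta''_{sm}\phi_i=\psi_i$ from $U_i\setminus D_i$ to all of $U_i$, and here your cut-off argument does not go through.

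The problem is the second-order term in the Leibniz expansion of $\Delta''_{sm}\big((1-\chi_\delta)\eta\big)$. You need $\langle\phi_i,(\Delta''_{sm}\chi_\delta)\cdot\eta\rangle\to 0$, and by Cauchy--Schwarz this would require $\|\ddbar\chi_\delta\|_{L^2}\to 0$ in the \emph{smooth} metric on $U_i$. But for the log--log cut-off $\chi_\delta=\rho_\delta\big(\log\log|w_1|^{-2}\big)$ one has $|\ddbar\chi_\delta|\sim|w_1|^{-2}\log^{-2}|w_1|^{-1}$ on its support, and a direct computation gives $\|\ddbar\chi_\delta\|_{L^2(\mathrm{Eucl})}\to\infty$ (only $\|\partial\chi_\delta\|_{L^2}\to 0$ survives). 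The bounds of Lemma~\ref{cutoff} are for the Poincar\'e metric, not the Euclidean one relevant here. This is not a mere technicality: $\log|w_1|^2$ is in $L^2_{loc}$ and harmonic on $\{w_1\neq 0\}$, yet $\Delta''_{sm}\log|w_1|^2$ is a nonzero current supported on $\{w_1=0\}$. So the removable-singularity step is simply false for a generic $L^2$ form, and your argument uses nothing about $\phi_i$ beyond its $L^2$ bound.

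The paper sidesteps this by exploiting the Galois structure of $\pi_i:U_i\to V_i$ rather than cut-offs. Given an arbitrary smooth test form $\rho$ compactly supported in $U_i$, one averages $w^{qm}\rho$ over the deck group to obtain an invariant form $G$ with $G/w^{qm}$ smooth; hence $G=\pi_i^\star\tau$ for some $\tau$ on $V_i$ smooth in conic sense. Using Proposition~\ref{orbderiv} one checks that the average of $w^{qm}\Delta''_{sm}\rho$ equals $\pi_i^\star(\Delta''\tau)$, whence
\[
\int_{U_i}\frac{\pi_i^\star\phi}{w^{qm}}\wedge\overline{\Delta''_{sm}\rho}
=\int_{V_i}\phi\wedge\overline{\Delta''\tau}\,e^{-\varphi_L}
=\int_{V_i}\psi\wedge\overline{\tau}\,e^{-\varphi_L}
=\int_{U_i}\frac{\pi_i^\star\psi}{w^{qm}}\wedge\overline{\rho},
\]
the middle equality being the hypothesis on $X$. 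This gives $\Delta''_{sm}\phi_i=\psi_i$ as distributions on all of $U_i$ in one stroke, with no extension argument needed.
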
	

\begin{proof}
	Let $V\subset X$ be a coordinate open subset, such that we have    
	a local ramified cover $\pi: U \rightarrow V$ given by the orbifold structure.
	By hypothesis we have 
	\begin{equation}\label{int12}
		\int_V\phi\wedge \ol {\Delta''\tau} e^{-\varphi_L}= \int_V\psi\wedge \ol {\tau} e^{-\varphi_L}
	\end{equation}
	for any $L$-valued form $\tau$ with compact support in $V$, smooth in orbifold sense.
	The Laplace operator in \eqref{int12} is induced by $\omega_{\cC}$ and $h_L$.
	
	\medskip
	
	We first prove that
	\begin{equation}\label{int15}
		\int_U\frac{\pi^\star(\phi)}{w^{mq}}\wedge \ol {\Delta''_{sm}(\rho)} = \int_U\frac{\pi^\star(\psi)}{w^{mq}}\wedge \ol {\rho}	
	\end{equation}
	for any smooth form $\rho$ of compact support in $U$. 
	
	In fact, the $m$-ramified cover $\pi: U \rightarrow V$ induces the Galois group $\gamma_1, \cdots , \gamma_m$ which are the automorphisms  $\gamma_i: U \rightarrow U$ invariant over $V$.
	Since $\pi^* \psi$ is $\gamma_i$-invariant, we have 
	$$\int_U\frac{\pi^\star(\psi)}{w^{mq}}\wedge \ol {\rho}	 =\frac{1}{m} \sum_i
	\int_U \pi^\star(\psi) \wedge \gamma_i ^* \frac{\ol {\rho}}{w^{mq}} 
	=\frac{1}{m} \sum_i
	\int_U \pi^\star(\psi) \wedge \gamma_i ^* (\ol {w^{mq} \cdot \rho}) \cdot e^{-\pi^*\varphi_L}	$$
	$$=\frac{1}{m} \int_U \pi^\star(\psi) \wedge \ol G \cdot e^{-\pi^* \varphi_L} ,$$
	where $G:= \sum_i  \gamma_i ^* (w^{mq} \rho)$ is $\pi$-invariant and the quotient $\frac{G}{w^{mq}}$ is smooth by construction. By definition, $G$ is the pull back of some form $\tau$ on $V$, smooth in conic sense.
	Therefore we have
	\begin{equation}\label{addd1}
		\int_U\frac{\pi^\star(\psi)}{w^{mq}}\wedge \ol {\rho}	 = \int_V \psi \wedge \ol \tau \cdot e^{-\varphi_L} .
	\end{equation}
	
	For the term $	\int_U\frac{\pi^\star(\phi)}{w^{mq}}\wedge \ol {\Delta''_{sm}(\rho)}$, we do the same argument:
	$$	\int_U\frac{\pi^\star(\phi)}{w^{mq}}\wedge \ol {\Delta''_{sm}(\rho)} =\frac{1}{m} \sum_i
	\int_U \pi^\star(\phi) \wedge \gamma_i ^* \frac{\ol {\Delta''_{sm}(\rho)} }{w^{mq}} 
	=\frac{1}{m} \sum_i
	\int_U \pi^\star(\phi) \wedge \gamma_i ^* (\ol {w^{mq} \cdot \Delta''_{sm}(\rho)}) \cdot e^{-\pi^*\varphi_L}	$$
	$$=\frac{1}{m} \int_U \pi^\star(\phi) \wedge \ol H \cdot e^{-\pi^* \varphi_L} ,$$
	where $H := \sum_i  \gamma_i ^* (w^{mq}\Delta''_{sm}(\rho) )$ is $\pi$-invariant and the quotient $\frac{H}{w^{mq}}$ is smooth. Then there exists a form $\theta$ on $V$ smooth in conic sense such that $H = \pi^* \theta$. 
	Then 
	\begin{equation}\label{addd2}
		\int_U\frac{\pi^\star(\phi)}{w^{mq}}\wedge \ol {\Delta''_{sm}(\rho)}	 = \int_V \phi \wedge \ol \theta \cdot e^{-\varphi_L} .
	\end{equation}
	
	Since $\Delta^{''} _{sm} $ commute with $\gamma_i$, we obtain $H = w^{qm} \Delta^{''} _{sm} (\frac{G}{ w^{qm}})$. Together with Proposition \ref{orbderiv}, 
	we know that $\theta = \Delta^{''} \tau$ on $V$. Combining this with \eqref{addd1}, \eqref{addd2} and \eqref{int12}, we obtain finally
	$$	\int_U\frac{\pi^\star(\phi)}{w^{mq}}\wedge \ol {\Delta''_{sm}(\rho)} = 
	\int_U\frac{\pi^\star(\psi)}{w^{mq}}\wedge \ol {\rho} . $$
	\medskip
	
	\noindent Therefore, the equality
	\begin{equation}\label{int16}
		\Delta''_{sm}(\wh \phi)= \frac{\pi^\star(\psi)}{w^mq}  
	\end{equation}
	holds in the sense of distributions on $U$, where $\displaystyle \wh \phi:= \frac{\pi^\star(\phi)}{w^{mq}}$ is moreover $L^2$. The corollary follows. 
\end{proof}	
\medskip

\noindent Another consequence is the following version of Bochner formula.
\begin{proposition}\label{bochner}
	Let $\phi$ be a $L$-valued $(p,q)$-form, smooth in orbifold sense.
	Then the equality
	\begin{equation}\label{bochnerconic}
		\Delta'' \phi = \Delta' \phi + [\theta_L, \omega_\cC] \phi 
	\end{equation}
	holds pointwise on $X\setminus Y$. Moreover the following Bochner formula holds.
	\begin{align}\nonumber
		\int_X|\dbar \phi|^2e^{-\varphi_L}dV_{\cC}+ \int_X|\dbar^\star \phi|^2e^{-\varphi_L}dV_{\cC}= & \int_X|D'\phi|^2e^{-\varphi_L}dV_{\cC}+ \int_X|D'^{\star} \phi|^2e^{-\varphi_L}dV_{\cC}\cr + & \int_X\langle[\theta_L, \omega_\cC]\phi, \phi\rangle e^{-\varphi_L}dV_{\cC}\cr
	\end{align}
\end{proposition}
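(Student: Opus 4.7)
The plan is two-fold: first establish the pointwise Bochner-Kodaira-Nakano identity on $X_0 := X\setminus Y$, where everything is smooth, and then derive the integrated version by integration by parts using Corollary \ref{int}.

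For the pointwise identity, note that on $X_0$ the Kähler form $\omega_\cC$ and the hermitian metric $h_L$ are both smooth, and the curvature current $i\Theta_{h_L}(L)$ reduces there to the smooth form $\theta_L$ (since the divisor contributions $\sum q_i[Y_i]$ are supported on $Y$). The classical Bochner-Kodaira-Nakano identity on a Kähler manifold reads $\Delta'' = \Delta' + [i\Theta_{h_L}(L), \Lambda_{\omega_\cC}]$ pointwise, which is exactly the stated equality $\Delta''\phi = \Delta'\phi + [\theta_L, \omega_\cC]\phi$ on $X_0$.

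For the integrated identity, I pair the pointwise identity with $\phi$ in the $(\omega_\cC, h_L)$-inner product and integrate over $X$ (recall that $Y$ has measure zero, so integration over $X$ and over $X_0$ agree). Proposition \ref{orbderiv}(2) ensures that $\dbar\phi$, $\dbar^\star \phi$, $D'\phi$, $D'^\star\phi$ are all smooth in conic sense, and Proposition \ref{orbderiv}(1) then gives a uniform pointwise bound on each of them over $X_0$; since $\omega_\cC$ has finite total volume, the four $L^2$ norms $\|\dbar\phi\|^2$, $\|\dbar^\star\phi\|^2$, $\|D'\phi\|^2$, $\|D'^\star\phi\|^2$ are all finite. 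The integration by parts of Corollary \ref{int} (whose $\dbar$-analog is established by exactly the same proof) then yields
$$\int_X \langle \dbar\dbar^\star \phi, \phi\rangle e^{-\varphi_L} dV_\cC = \|\dbar^\star \phi\|^2, \qquad \int_X \langle \dbar^\star \dbar \phi, \phi\rangle e^{-\varphi_L} dV_\cC = \|\dbar \phi\|^2,$$
together with the analogous identities for $D'$ and $D'^\star$. Summing and using $\Delta'' = \dbar\dbar^\star+\dbar^\star\dbar$ and $\Delta' = D'(D')^\star+(D')^\star D'$ produces the integrated Bochner formula.

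The only delicate point is justifying the integration by parts near $Y$. Corollary \ref{int} handles this via the truncation $\mu_\ep$ of Lemma \ref{cutoff}, and the argument goes through precisely because Proposition \ref{orbderiv}(1) provides the uniform pointwise bounds needed to apply dominated convergence to the residual term $\int_X |\partial \mu_\ep| dV_\cC$, which vanishes as $\ep \to 0$. A structurally cleaner alternative is to pass to the local ramified covers $\pi_i : U_i \to V_i$ of Proposition \ref{orbderiv}(3), where $\pi_i^\star \omega_\cC$ is smooth and the bundle is trivialized with a smooth metric: there the classical integrated Bochner identity applies directly, and a partition of unity subordinate to $\{V_i\}$ (together with the intertwining relations of Proposition \ref{orbderiv}(2)-(3)) glues the local identities into the global one.
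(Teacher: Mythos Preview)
Your proof is correct and follows essentially the same approach as the paper. For the pointwise identity you work directly on $X_0$ where everything is smooth (the paper instead passes to the ramified covers via Proposition~\ref{orbderiv}, but this is a cosmetic difference), and for the integrated identity you invoke Corollary~\ref{int}, whose proof is precisely the cutoff argument $\phi\mapsto\mu_\ep\phi$ that the paper carries out explicitly here.
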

\begin{proof}
	On every local ramified cover $\pi_i : U_i \rightarrow V_i$, thanks to Proposition \ref{orbderiv}, we have 
	$$	\pi_i ^{\star}(\Delta'' \phi) = w_i^{qm} \cdot \Delta''_{sm} \phi_i .$$
	Using the same type of arguments we infer that we have
	$$	\pi_i ^{\star}(\Delta' \phi) = w_i^{qm} \cdot \Delta'_{sm} \phi_i .$$
	Together with the usual Bochner equality $\displaystyle \Delta''_{sm}  = \Delta'_{sm} + [i\Theta_{\pi^\star\varphi_0}, \pi_i ^\star \omega_\cC]$, we obtain \eqref{bochnerconic}. 
	\smallskip
	
	\noindent In order to obtain the integral identity, we first see that the formula holds if we replace $\phi$ with $\mu_\ep \phi$, where the cut-off function $\mu_\ep$ is defined in Lemma \ref{cutoff}. 
	Since the error terms e.g. of type
	\begin{equation}\label{van16}
		\int_X\langle \dbar\mu_\ep\wedge \phi, \dbar\phi\rangle e^{-\varphi_L}dV_\cC
	\end{equation}
	tend to zero, we are done. Note that here we are using the fact that the function $|\phi| |\dbar\phi|e^{-\varphi_L}$ is bounded (and not only $L^1$).
\end{proof}
\medskip

\noindent Yet another corollary states as follows.
\begin{cor}\label{kompsup} Let $\tau$ be a smooth $(n, q)$-form with values in $(L, h_L)$ and support in $X\setminus Y$. 
We consider the (usual) orthogonal decomposition
\begin{equation}\label{van31}
	\tau= \tau_1+ \tau_2
\end{equation}
where $\tau_1\in \Ker \dbar$ and $\tau_2\in (\Ker \dbar)^\perp$. \begin{itemize}

\item We have 
\begin{equation}\label{van32}\nonumber
	\dbar \tau_1= 0, \qquad  \dbar ^\star \tau_1= 0
\end{equation}
when restricted to $U$, the complement of the support of $\tau$. In other words, $\tau_1|_U$ is harmonic, hence smooth in conic sense.
\smallskip

\item As a consequence of the first bullet, Bochner formula holds true for the projection $\tau_1$ of $\tau$ on the kernel of 
$\dbar$. The is true more generally for forms $\rho$ such that $\displaystyle \sup_{X\setminus Y}|\rho|_{h_L, \omega_\cC}< \infty$.
\end{itemize}
\end{cor}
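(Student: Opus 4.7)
The plan is to first establish a distributional identity for $\dbar^\star \tau_2$, upgrade this to a conic-smooth regularity statement for $\tau_1$ via Corollary~\ref{smoothn}, and then read off the harmonicity of $\tau_1$ on $U$ from the vanishing of $\tau$ there. For the second bullet the idea is simply to re-run the cut-off argument used in the proof of Proposition~\ref{bochner}, using the boundedness of $|\rho|_{h_L, \omega_\cC}$ to kill error terms.

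For the first bullet, $\dbar \tau_1 = 0$ on $X_0$ is immediate from $\tau_1 \in \Ker \dbar$. For $\dbar^\star \tau_1 = 0$ on $U$, the key input is the orthogonality $\tau_2 \perp \Ker \dbar$: applied to test forms of the shape $\dbar \psi$, with $\psi$ a smooth $(n, q-1)$-form compactly supported in $X_0$, this gives
\[
\langle \dbar^\star \tau_2, \psi \rangle = \langle \tau_2, \dbar \psi \rangle = 0,
\]
so $\dbar^\star \tau_2 = 0$ in the sense of distributions on $X_0$. Subtracting from $\dbar^\star \tau$ yields $\dbar^\star \tau_1 = \dbar^\star \tau$ distributionally, and combining with $\dbar \tau_1 = 0$ we obtain
\[
\Delta'' \tau_1 \;=\; \dbar \dbar^\star \tau_1 \;=\; \dbar \dbar^\star \tau
\]
as distributions. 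The right-hand side is smooth with compact support in $X_0 \setminus Y$, hence smooth in conic sense. Since $\tau_1$ is $L^2$, Corollary~\ref{smoothn} implies that $\tau_1$ is itself smooth in conic sense. In particular $\dbar^\star \tau_1$ is continuous and the distributional identity $\dbar^\star \tau_1 = \dbar^\star \tau$ holds pointwise; on $U$ the right-hand side vanishes because $\tau \equiv 0$ there, which completes the first bullet.

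For the second bullet, the point is that the proof of Proposition~\ref{bochner} only uses the bound $\sup_{X \setminus Y} |\phi|_{h_L, \omega_\cC} < \infty$ to dominate the cut-off error terms. Applied to $\tau_1$, which is smooth in conic sense and hence bounded by Proposition~\ref{orbderiv}(1), the formula holds verbatim. For the more general statement one would repeat the same argument with $\mu_\ep \rho$ in place of $\phi$: the integration-by-parts in Corollary~\ref{int} remains valid, and the error terms are controlled by integrals of the type
\[
\int_X |\partial \mu_\ep|_{\omega_\cC}\, |\rho|^2_{h_L, \omega_\cC}\, e^{-\varphi_L}\, dV_{\omega_\cC},
\]
which tend to zero as $\ep \to 0$ by Lemma~\ref{cutoff}(c) and the uniform bound on $|\rho|_{h_L, \omega_\cC}$.

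I expect the main obstacle to be the first bullet: one must recognise that although $\tau_1$ is \emph{a priori} only $L^2$, the distributional Laplace equation $\Delta'' \tau_1 = \dbar \dbar^\star \tau$ combined with the conic smoothness of the right-hand side forces $\tau_1$ to be smooth in conic sense via Corollary~\ref{smoothn}. Without this regularity step one cannot justify the pointwise restriction of $\dbar^\star \tau_1$ to $U$, and the harmonicity on $U$ would remain only a distributional identity.
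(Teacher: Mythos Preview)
Your argument for the first bullet is correct and is precisely the kind of reasoning the paper leaves implicit (the corollary is stated without proof). The chain $\tau_2\perp\Ker\dbar \Rightarrow \dbar^\star\tau_2=0$ distributionally $\Rightarrow \Delta''\tau_1=\dbar\dbar^\star\tau$ $\Rightarrow$ Corollary~\ref{smoothn} gives conic smoothness of $\tau_1$ is exactly right; you might add one sentence noting that the test forms allowed in Corollary~\ref{smoothn} are those smooth in conic sense, and since $\dbar\psi\in\Ker\dbar$ for any such $\psi$ the orthogonality still applies, so the identity $\Delta''\tau_1=\dbar\dbar^\star\tau$ indeed holds in the sense required there and not merely on $X_0$.

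For the second bullet there is a small imprecision. The error terms in Proposition~\ref{bochner} are of the shape $\int_X\langle\dbar\mu_\ep\wedge\phi,\dbar\phi\rangle e^{-\varphi_L}dV_\cC$, so they are controlled by $|\rho|\,|\dbar\rho|$ (and analogues with $D'$, $\dbar^\star$, $D'^\star$), not by $|\rho|^2$ alone. For $\tau_1$ this is harmless since conic smoothness bounds all derivatives via Proposition~\ref{orbderiv}(1). For the ``more general'' clause the paper's hypothesis $\sup|\rho|<\infty$ is really shorthand for boundedness of $\rho$ together with the relevant first derivatives; your write-up should either say this explicitly or simply note that the argument of Proposition~\ref{bochner} goes through whenever $|\rho|\cdot|\nabla\rho|e^{-\varphi_L}$ is bounded. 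Also, Lemma~\ref{cutoff}(c) refers to the Poincar\'e metric; for the conic metric the relevant fact (used after Theorem~\ref{JP}) is that $\int_X|\dbar\mu_\ep|_{\omega_\cC}\,dV_\cC\to 0$, which is what actually kills the error terms here.
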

\medskip

\begin{cor} If $i\Theta_{h_L} \geq 0$ (resp. $i\Theta_{h_L} \leq 0$), and $\phi$ is a $L$-valued $\Delta''$-harmonic $(n,q)$-form (resp. $(0,q)$-form) smooth in conic sense.
Then $D_{h_L} \phi = D_{h_L}^\star \phi =0$ and $\theta_L \wedge \phi =0$.
\end{cor}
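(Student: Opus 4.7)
The plan is to apply the integrated Bochner formula from Proposition \ref{bochner} and exploit the fact that $\Delta''$-harmonicity forces $\dbar\phi = 0$ and $\dbar^{\star}\phi = 0$. Since $\phi$ is smooth in conic sense, Proposition \ref{orbderiv}(1) guarantees that $|\phi|_{h_L,\omega_\cC}$ is bounded on $X_0$, and by part (2) the same is true for $D'\phi$, $D'^{\star}\phi$, $\dbar\phi$, $\dbar^{\star}\phi$. Since the conic volume is finite, every integral appearing in the Bochner identity is well-defined and finite, and the integration by parts needed to turn $\Delta''\phi = 0$ into $\|\dbar\phi\|^2 + \|\dbar^{\star}\phi\|^2 = 0$ is legitimate by Corollary \ref{int}.

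First I would write
\begin{equation*}
0 = \int_X |\dbar\phi|^2 e^{-\varphi_L}dV_\cC + \int_X |\dbar^{\star}\phi|^2 e^{-\varphi_L}dV_\cC = \int_X |D'\phi|^2 e^{-\varphi_L}dV_\cC + \int_X |D'^{\star}\phi|^2 e^{-\varphi_L}dV_\cC + \int_X \langle [\theta_L,\Lambda_{\omega_\cC}]\phi,\phi\rangle e^{-\varphi_L}dV_\cC.
\end{equation*}
The key point is then that the curvature term on the right is pointwise non-negative in both cases of the statement. This is the standard Akizuki--Nakano type pointwise computation in a frame simultaneously diagonalizing $\omega_\cC$ and $\theta_L$: for $\phi$ of type $(n,q)$ and $\theta_L\geq 0$ one gets $\langle[\theta_L,\Lambda]\phi,\phi\rangle = \sum_{|J|=q}(\lambda_{j_1}+\cdots+\lambda_{j_q})|\phi_{1\ldots n,J}|^2 \geq 0$; for $\phi$ of type $(0,q)$ and $\theta_L\leq 0$, a quick calculation using $\Lambda\phi = 0$ gives $\langle[\theta_L,\Lambda]\phi,\phi\rangle = -\sum_J\bigl(\sum_{j\notin J}\lambda_j\bigr)|\phi_J|^2 \geq 0$.

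Consequently all three non-negative terms on the right must vanish: $D'\phi = 0$ and $D'^{\star}\phi = 0$ follow immediately, and the curvature integrand vanishes pointwise almost everywhere. To deduce $\theta_L\wedge\phi = 0$: in the $(n,q)$ case this is trivial since $\theta_L\wedge\phi$ has bidegree $(n+1,q+1)$ and thus vanishes identically on an $n$-dimensional manifold. In the $(0,q)$ case the pointwise identity $\sum_J\bigl(\sum_{j\notin J}\lambda_j\bigr)|\phi_J|^2 = 0$ with all $\lambda_j\leq 0$ forces $\lambda_j\phi_J = 0$ whenever $j\notin J$; but $\theta_L\wedge\phi = i\sum_j\sum_{J\not\ni j}\lambda_j\phi_J\, dz^j\wedge d\bar z^j\wedge d\bar z^J$, so this is exactly the vanishing we need.

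The only genuine check in this argument is the pointwise sign of $[\theta_L,\Lambda_{\omega_\cC}]$ in the $(0,q)$ case together with the conversion of its vanishing into $\theta_L\wedge\phi = 0$, but both are routine diagonal-frame linear algebra; the rest is a direct application of Proposition \ref{bochner} and Corollary \ref{int}, both of which were set up precisely for this kind of use.
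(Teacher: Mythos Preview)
Your proof is correct and follows the same route as the paper, which simply says ``harmonic forms are smooth in conic sense, hence Bochner formula applies and the corollary follows.'' You have spelled out the details the paper omits --- in particular the pointwise diagonal-frame computation giving the sign of $\langle[\theta_L,\Lambda]\phi,\phi\rangle$ and the conversion of its vanishing into $\theta_L\wedge\phi=0$ in the $(0,q)$ case --- but the strategy is identical.
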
	

\begin{proof}
	All we have to do is to combine the previous results: harmonic forms are smooth in conic sense, hence Bochner formula applies and the corollary follows.
\end{proof}

\medskip

\noindent Like in the smooth case, we can define the Hodge operators $\star$ and $\sharp= \ol \star $ in our setting, namely give a $L$-value $(p,q)$-form $s$,  there exists a unique $L^*$-value $(n-p,n-q)$-form,  denoted by $\sharp t$, such that for every $L$-valued $(p,q)$-form $s$, we have
$$\langle s, t\rangle_{h_L} dV_{\omega_\cC} = s\wedge \sharp t ,$$
where $s\wedge \sharp t $ is calculated by using the natural pairing $L\otimes L^\star \rightarrow \mathbb C$. We have the following statements, whose proofs are left to the interested readers.

\begin{proposition}
	Let $t$ be a $L$-valued $(p,q)$-form, smooth in the conic sense. Then $\sharp t$ is a $L^\star$-valued $(n-p, n-q)$-form, smooth in the conic sense (with respect to $ (\omega_\cC , h_{L}^\star)$).
\end{proposition}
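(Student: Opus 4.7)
The plan is to reduce to the ordinary smooth Hodge theory on the local ramified orbifold covers $\pi_i\colon U_i \to V_i$, and then perform a careful bookkeeping of the singular weight factors of $h_L$ against those of $h_L^\star$ in order to match the conic-smoothness definition on each side.

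First, I would fix a chart $V_i$, a holomorphic frame $e_L$ of $L|_{V_i}$ with dual frame $e_L^\star$ of $L^\star|_{V_i}$, and write $t = t_0 \otimes e_L$ with $t_0$ an ordinary $(p,q)$-form. Applying the defining identity $\langle s, t\rangle_{h_L}\, dV_{\omega_\cC} = s\wedge \sharp t$ to a test section $s = s_0 \otimes e_L$ and using both $\langle s, t\rangle_{h_L} = \langle s_0, t_0\rangle_{\omega_\cC}\, e^{-\varphi_L}$ and the smooth identity $\langle s_0, t_0\rangle_{\omega_\cC} dV_{\omega_\cC} = s_0 \wedge \overline{\star_{\omega_\cC} t_0}$ immediately yields, pointwise on $V_i \setminus Y$,
$$\sharp t \;=\; e^{-\varphi_L}\,\overline{\star_{\omega_\cC} t_0}\otimes e_L^\star.$$
Pulling this back by $\pi_i$, which is a local isometry between $(U_i\setminus \pi_i^{-1}(Y), \pi_i^\star \omega_\cC)$ and $(V_i\setminus Y, \omega_\cC)$ and therefore commutes with the Hodge $\overline{\star}$, and using $\pi_i^\star \varphi_L = \sum_\alpha m q_\alpha \log|w_\alpha|^2 + \varphi_{L,0}\circ\pi_i$, one obtains
$$\pi_i^\star(\sharp t) \;=\; |w_i^{qm}|^{-2}\, e^{-\varphi_{L,0}\circ\pi_i}\cdot \overline{\star_{\pi_i^\star\omega_\cC}(\pi_i^\star t_0)}\otimes e_L^\star.$$

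Second, I would unpack the conic-smoothness hypothesis. By definition $\pi_i^\star t_0 = w_i^{qm}\,\tilde t_0$ with $\tilde t_0$ smooth on $U_i$. Since $\pi_i^\star \omega_\cC$ is a genuine smooth Kähler metric on $U_i$, the operator $\overline{\star_{\pi_i^\star\omega_\cC}}$ is a standard smooth $\mathbb{C}$-linear Hodge operator, so
$$\overline{\star_{\pi_i^\star\omega_\cC}\bigl(w_i^{qm}\,\tilde t_0\bigr)} \;=\; w_i^{qm}\cdot\overline{\star_{\pi_i^\star\omega_\cC}\tilde t_0}$$
(up to a possible complex conjugation on $w_i^{qm}$ depending on the sign convention; the cancellation in the next step occurs either way since $|w_i^{qm}|^{-2}$ is real). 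Substituting back one arrives at
$$\pi_i^\star(\sharp t) \;=\; \frac{1}{w_i^{qm}}\cdot(\text{smooth form on }U_i)\otimes e_L^\star.$$

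Third, the metric $h_L^\star$ has weight $-\varphi_L$, so the natural analogue of Definition~\ref{forms} for $L^\star$-valued forms demands exactly that $w_i^{qm}\cdot \pi_i^\star(\sharp t)$ extend smoothly to $U_i$, which is precisely what the previous display delivers; patching over the finite cover $(V_i)_{i\in I}$ and using the manifest compatibility of $\sharp$ with local changes of the holomorphic frame of $L$ then yields the global statement. The only substantive point -- and hence the main obstacle -- is the cancellation between the pole $e^{-\varphi_L}$ built into $\sharp t$ and the weight flip $\varphi_L \mapsto -\varphi_L$ passing from $L$ to $L^\star$; the integrality assumption $m q_\alpha \in \mathbb{Z}$ from hypothesis $(ii)$ of the setting is precisely what makes $w_i^{qm}$ a well-defined holomorphic function on $U_i$, so that the whole bookkeeping is legitimate.
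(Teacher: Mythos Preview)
The paper does not actually prove this proposition --- it is explicitly ``left to the interested readers'' --- so there is no argument to compare against. Your approach via the local ramified covers is exactly the natural one and is essentially correct.

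One small but genuine slip: you call $\overline{\star_{\pi_i^\star\omega_\cC}}$ a ``$\mathbb{C}$-linear'' operator, and your parenthetical hedge (``the cancellation in the next step occurs either way since $|w_i^{qm}|^{-2}$ is real'') is not quite true. The operator $\sharp=\overline{\star}$ is \emph{conjugate}-linear in $t$, as one reads off from the defining identity $\langle s,t\rangle_{h_L}\,dV=s\wedge\sharp t$. Hence the honest computation gives
\[
\overline{\star_{\pi_i^\star\omega_\cC}\bigl(w_i^{qm}\,\tilde t_0\bigr)}=\overline{w_i^{qm}}\cdot\overline{\star_{\pi_i^\star\omega_\cC}\tilde t_0},
\]
and then $|w_i^{qm}|^{-2}\cdot\overline{w_i^{qm}}=(w_i^{qm})^{-1}$, so that $w_i^{qm}\cdot\pi_i^\star(\sharp t)$ is smooth --- precisely the conic-smoothness condition for $(L^\star,h_L^\star)$, whose weight coefficients are $-q_i$. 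Had the factor come out as $w_i^{qm}$ instead of $\overline{w_i^{qm}}$, you would be left with $w_i^{qm}/\overline{w_i^{qm}}$, which is \emph{not} smooth across $w_i=0$; so the convention does matter here, and it is worth stating it correctly rather than hedging. With that correction the proof is complete.
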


\begin{proposition}\label{sharppair}
	Let $t$ be a $\Delta''$-harmonic form with value in $L$. Then $\sharp t$ is $\Delta''$-harmonic form with value in $L^*$ 
\end{proposition}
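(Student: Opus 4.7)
The plan is to reduce the claim to the classical Kähler identity $\Delta''_{L^\star} \circ \sharp = \sharp \circ \Delta''_L$, which holds pointwise on the smooth locus $X \setminus Y$, and then to use the orbifold framework of this subsection to upgrade it to a statement on all of $X$.

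First, Corollary \ref{smoothn} applied to the (trivially smooth-in-conic-sense) equation $\Delta'' t = 0$ implies that $t$ is itself smooth in the conic sense. The preceding proposition then guarantees that $\sharp t$ is an $L^\star$-valued form smooth in the conic sense with respect to $(\omega_\cC, h_L^\star)$, so in particular $\sharp t$ is bounded and $L^2$ on $X \setminus Y$.

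On $X \setminus Y$ both $\omega_\cC$ and $h_L$ are smooth, so I can invoke the standard Kähler identity
\[
\Delta''_{L^\star}(\sharp t) \;=\; \sharp\bigl(\Delta''_L t\bigr) \;=\; 0
\]
pointwise there; this identity follows from the usual adjoint relations $\dbar^{\star} = -\sharp^{-1} D'_{L^\star} \sharp$ and $(D')^{\star} = -\sharp^{-1} \dbar_{L^\star} \sharp$ combined with the Kähler equality $\Delta'' = \Delta'$.

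To conclude globally, I pass to each local ramified cover $\pi_i : U_i \to V_i$. The conic-smoothness of $\sharp t$ means its local avatar $(\sharp t)_i$ on $U_i$ is an honest smooth form, and by the $(L^\star, h_L^\star)$-version of Proposition \ref{orbderiv} the condition $\Delta''_{L^\star}(\sharp t) = 0$ on $V_i$ translates to the smooth equation $\Delta''_{sm}((\sharp t)_i) = 0$ on $U_i$. But this smooth equation already holds on the dense open subset $\pi_i^{-1}(V_i \setminus Y)$ by the previous paragraph; since $(\sharp t)_i$ is smooth on $U_i$, it extends by continuity to all of $U_i$, yielding $\Delta''_{L^\star}(\sharp t) = 0$ on each $V_i$, and hence on $X$. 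The main obstacle is precisely this smooth/singular transition: one must ensure that harmonicity known \emph{a priori} only on $X \setminus Y$ survives across the divisor $Y$, and this is exactly where the conic-smoothness of $\sharp t$ together with the orbifold uniformization $\pi_i$ is essential, since it allows one to replace a potentially delicate extension problem by the elementary "smooth function vanishing on a dense set" argument upstairs on $U_i$.
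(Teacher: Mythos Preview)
Your overall strategy is sound and is presumably what the authors had in mind (they leave the proof to the reader): show $t$ and hence $\sharp t$ are smooth in conic sense, verify $\Delta''_{L^\star}(\sharp t)=0$ pointwise on $X_0$, then use the orbifold uniformisations to make this global. The last two steps are fine as written.

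The gap is in your justification of the pointwise identity $\Delta''_{L^\star}\circ\sharp=\sharp\circ\Delta''_L$. The relations you quote, $\dbar^\star=-\sharp^{-1}D'_{L^\star}\sharp$ and $(D')^\star=-\sharp^{-1}\dbar_{L^\star}\sharp$, do not type-check: if $t$ is $(p,q)$ then $\sharp t$ is $(n-p,n-q)$, $D'_{L^\star}(\sharp t)$ is $(n-p+1,n-q)$, and $\sharp^{-1}$ of that lands in bidegree $(p-1,q)$, not $(p,q-1)$. You are confusing $\sharp=\bar\star$ with the $\mathbb C$-linear $\star$; the standard formula $\dbar^\star=-\star D'\star^{-1}$ stays on $L$-valued forms throughout. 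Moreover, the ``K\"ahler equality $\Delta''=\Delta'$'' you invoke is false for twisted forms: one has $\Delta''_L-\Delta'_L=[i\Theta_L,\Lambda]$, which is nonzero here.

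The identity you want is nonetheless true, and does not require $\Delta''=\Delta'$ at all. From Stokes and the Leibniz rule for the pairing $L\otimes L^\star\to\mathbb C$ (this is Corollary~\ref{int} in the conic setting) one obtains, on $(p,q)$-forms,
\[
\sharp_L\,\dbar^\star_L=(-1)^{p+q}\,\dbar_{L^\star}\,\sharp_L,\qquad
\sharp_L\,\dbar_L=(-1)^{p+q+1}\,\dbar^\star_{L^\star}\,\sharp_L,
\]
and composing these two in both orders gives $\sharp_L\Delta''_L=\Delta''_{L^\star}\sharp_L$ directly, with the signs cancelling. Replace your paragraph on the adjoint relations by this computation and the proof is complete.
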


\medskip

\noindent The usual G{\aa}rding and Sobolev inequalities together with Rellich embedding theorem still hold in conic setting. They are established by their usual versions via the local uniformisations of $(X, \Delta)$. We will state and comment G\aa rding inequality, but prior to doing that, we introduce the analogue of Sobolev spaces in our context.
\smallskip

\begin{defn} We denote by $W^k _{p,q} (X, L)$ the space of $L^2$ forms $u$ of type $(p,q)$ with values in $L$, such that its pull back 
	$\displaystyle \frac{\pi_i ^\star u}{w_i ^{qm}}$ via the ramified coverings belongs to the usual Sobolev space $W^k(U_i)$. The Sobolev norm of $W^k _{p,q} (X, L)$ is obtained by a partition of unit and the usual Sobolev norm of the local forms $\displaystyle \frac{\pi_i ^\star u}{w_i ^{qm}}$.
\end{defn}

\noindent Given a form $u \in W^k _{p,q} (X, L)$, we can approximated it by smooth forms $u_\ep$ in the sense of conic.  In fact, by using the partition of unity, it is sufficient to consider the case when $u$ is supported in some open set $V_i$. Let $z$ be the coordinate on $U_i$. Let $\rho_\ep (\|w\|^2)$ be the standard convolution function. Let $G$ be the Galois group of the covering $\pi: U_i \rightarrow V_i$ and let $\gamma\in G$. Then $\rho_\ep (|w|^2)$ is $G$-invariant. Assume for simplicity that $p=q=0$; we then consider the function
$$ v_\ep (w) := \frac{(w^{-qm} \cdot \pi^\star u) \star \rho_\ep}{w^{-qm}}.$$
Note that $\gamma (w)^{-qm} = c_\gamma \cdot w^{-qm}$ for some constant $c_\gamma$ independent of $w$. Then 
$$v_\ep (\gamma (w))  = \frac{  \int_{U_i}  (\gamma (w)-\tau)^{-qm} \cdot \pi^\star u (\gamma (w)-\tau) \cdot  \rho_\ep (\|\tau\|^2) dV_\tau }{\gamma (w)^{-qm}} $$
$$= \frac{ c_\gamma \int_{U_i}  (w-\gamma^{-1}(\tau))^{-qm} \cdot \pi^\star u ( w -\gamma^{-1}(\tau)) \cdot  \rho_\ep (\|\tau\|^2) dV_{\tau} }{c_\gamma \cdot w^{-qm}}$$
$$ =\frac{ c_\gamma \int_{U_i}  (w- \tau)^{-qm} \cdot \pi^\star u ( w -\tau) \cdot  \rho_\ep (\|\gamma (\tau)\|^2) dV_{\gamma(\tau)} }{c_\gamma \cdot w^{-qm}} =v_\ep (w) ,$$
where we use the fact that $\pi^* u$, $\rho_\ep (\|w\|^2)$ and $dV_w$ are $G$-invariant. 
Therefore $v_\ep$ is $G$-invariant and we can find some form $u_\ep$ on $U_i$ such that $v_\ep = \pi^* u_\ep$.  Then $u_\ep$ is smooth in conic sense and $\|u_\ep - u\|_{W^k_{p,q}} \rightarrow 0$.
\smallskip

\noindent The general case of an arbitrary $(p, q)$--form is similar, obtained by regularising locally its coefficients.
\medskip

\noindent We have the following version of G{\aa}rding theorem.
\begin{thm}
	Let $\Delta'' := [\dbar^\star, \dbar]$ be the Laplace operator induced by $(h_L, \omega_\cC)$. 	
	Then for any $u\in L^2 _{p,q} (X, L)$, if $\Delta''u \in W^k _{p,q} (X, L)$, then $u \in W^{k+2} _{p,q} (X, L)$ and 
	$$ \|u\|_{k+2} \leq C_k ( \|\Delta''u \|_k + \|u\|_0 ).$$
\end{thm}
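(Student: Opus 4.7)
The strategy is to transfer the statement from $X$ to the local smooth ramified covers $\pi_i:U_i\to V_i$ of the orbifold structure, where the problem becomes a standard elliptic regularity question for a genuinely smooth metric, and then apply classical G\aa rding.

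Fix a partition of unity $\{\theta_i\}$ subordinate to the cover $\{V_i\}$. By linearity and the triangle inequality, and by the very definition of $W^k_{p,q}(X,L)$ through the local pullbacks
\[
u_i := \frac{\pi_i^\star(\theta_i u)}{w_i^{qm}},
\]
it is enough to prove, for each $i$, an estimate of the shape
\[
\|u_i\|_{W^{k+2}(U_i')} \;\le\; C_k\bigl(\|\Delta''_{sm} u_i\|_{W^k(U_i)} + \|u_i\|_{L^2(U_i)}\bigr),
\]
for $U_i'\Subset U_i$ containing $\pi_i^{-1}(\Supp\theta_i)$, and then sum over $i$. Here I use Proposition \ref{orbderiv}, which asserts precisely that
\[
\pi_i^\star(\Delta'' u) \;=\; w_i^{qm}\cdot \Delta''_{sm}(u_i)
\]
on $U_i$, together with the fact that the pullback metric $\pi_i^\star\omega_\cC$ and the pulled-back weight $\pi_i^\star\varphi_{L,0}$ are smooth on $U_i$. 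Consequently $\Delta''_{sm}$ is a bona fide second order elliptic operator with smooth coefficients on $U_i$, acting on $G$-invariant forms (where $G$ denotes the Galois group of $\pi_i$), so the classical G\aa rding inequality applies verbatim.

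The commutator $[\Delta'', \theta_i]$ is a first order differential operator with bounded (conic-smooth) coefficients, so it contributes only lower order terms of the form $\|u\|_{W^{k+1}}$, which are absorbed by interpolation between $W^{k+2}$ and $L^2$. Equivalently, one may first localise on $U_i$ via a smooth cut-off $\wt\theta_i$ equal to $1$ near $\pi_i^{-1}(\Supp\theta_i)$ and apply interior G\aa rding to $\wt\theta_i u_i$. Summing the resulting local estimates over $i$ gives exactly
\[
\|u\|_{k+2} \;\le\; C_k\bigl(\|\Delta'' u\|_k + \|u\|_0\bigr),
\]
as desired.

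The only genuinely delicate point is the regularisation step needed to make the manipulation $u_i:=\pi_i^\star(\theta_iu)/w_i^{qm}$ rigorous when $u$ is only $L^2$. This is handled precisely by the $G$-invariant convolution procedure described just before the statement of the theorem: one approximates $u$ in the conic Sobolev topology by forms smooth in the conic sense, applies the classical G\aa rding inequality on the approximants, and passes to the limit using that $\Delta''_{sm}$ has closed graph on $L^2$ and that the identity $\pi_i^\star\Delta''=w_i^{qm}\Delta''_{sm}(\cdot/w_i^{qm})$ passes to distributions (exactly as in the proof of Corollary \ref{smoothn}). I expect this approximation/limit step, rather than the elliptic estimate itself, to be the main obstacle to writing out the proof in complete detail.
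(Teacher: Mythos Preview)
Your proposal is correct and follows essentially the same approach as the paper: localise via a partition of unity, use the dictionary of Proposition \ref{orbderiv} to transfer $\Delta''$ to the smooth operator $\Delta''_{sm}$ on the ramified covers $U_i$, and invoke classical G\aa rding there. The paper's own proof is in fact terser than yours---it simply observes that the compactly supported case reduces to the usual G\aa rding via the covers and then appeals to a partition of unity---so your treatment of the commutator terms and the regularisation step is additional (and welcome) detail rather than a deviation.
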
	

\begin{proof}
	We first observe that this is a direct consequence of the usual G\aa rding inequality if the form
	$u$ has compact support contained in one of the sets $V_i$. Moreover, the ``dictionary" established in Proposition \ref{orbderiv}, 
	shows that one could define the space $\big(W^k _{p,q} (X, L), \|\cdot \|_k\big)$ directly on $X$, by using the covariant derivative on $L$-valued forms induced by $h_L$ and $\omega_\cC$. The general case is obtained by a partition of unit argument.   
\end{proof}	
\medskip

\noindent As a consequence, the Hodge decomposition still holds in the conic setting, as we see e.g. from the proof presented in \cite{bookJP},
Chapter 6.
\begin{thm}\label{CoHo1}
	Let $(L, h_L)\to X$ be a line bundle endowed
	with a metric $h_L$ such that the requirements {\rm (i)}-{\rm (ii)} at the beginning of this section are satisfied. 
	Let $\omega_\cC$ be a K\"ahler metric with conic singularities as in \eqref{pave28}.
	Then we have the following Hodge decomposition
	\begin{equation}\label{pave29111}
		L^2 _{p, q}(X, L)=\Ker \Delta''_{h_L} \oplus
		\Imm \dbar\oplus \Imm \dbar^\star
	\end{equation}
	and
	\begin{equation}\label{pave291}
		\cC^\infty _{p, q}\big(X, (L, h_L)\big)= \Ker \Delta'' _{h_L} \oplus
		\Imm \Delta'' _{h_L} ,
	\end{equation}
	where $\displaystyle \cC^\infty _{p, q}\big(X, (L, h_L)\big)$ and $L^2 _{p, q}(X, L)$ are the spaces of $L$-valued $(p,q)$-forms smooth in conic sense and $L^2$, respectively.
\end{thm}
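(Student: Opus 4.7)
The plan is to follow the classical Hodge-theory template (essentially the one in \cite{bookJP}, Chapter~6), with two inputs taken from the conic machinery of this subsection: the G{\aa}rding-type inequality with respect to $\Delta''$ together with Rellich's compact embedding $W^2_{p,q}(X,L)\hookrightarrow L^2_{p,q}(X,L)$, which transfer to our setting via the local ramified covers $\pi_i:U_i\to V_i$, and the elliptic-regularity statement of Corollary \ref{smoothn}. The three main steps are: finite-dimensionality of $\Ker \Delta''$; closedness of the range $\Imm \Delta''$; and refinement of the resulting orthogonal decomposition by means of $\dbar$ and $\dbar^\star$.

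For finite-dimensionality, let $(u_k)\subset \Ker \Delta''$ with $\|u_k\|_0\le 1$. G{\aa}rding gives $\|u_k\|_2\le C(\|\Delta'' u_k\|_0+\|u_k\|_0)\le C$, and Rellich extracts an $L^2$-convergent subsequence; hence the unit ball of $\Ker \Delta''$ is relatively compact and $\Ker\Delta''$ is finite-dimensional. For closedness of $\Imm\Delta''$, take $\psi_n=\Delta''\phi_n\to \psi$ in $L^2$ with $\phi_n\perp \Ker\Delta''$. If the $\phi_n$ were unbounded, normalising gives $\tilde\phi_n=\phi_n/\|\phi_n\|$ with $\|\tilde\phi_n\|=1$ and $\Delta''\tilde\phi_n\to 0$; G{\aa}rding plus Rellich yields an $L^2$-limit $\tilde\phi_\infty$ with $\Delta''\tilde\phi_\infty=0$ and $\tilde\phi_\infty\perp \Ker\Delta''$, so $\tilde\phi_\infty=0$, contradicting $\|\tilde\phi_\infty\|=1$. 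Hence $(\phi_n)$ is $L^2$-bounded, thus $W^2$-bounded, and a subsequential limit $\phi$ satisfies $\Delta''\phi=\psi$ in the sense of distributions. Standard self-adjoint theory then yields $L^2_{p,q}(X,L)=\Ker\Delta''\oplus \Imm\Delta''$, and writing $\Delta''\phi=\dbar(\dbar^\star\phi)+\dbar^\star(\dbar\phi)$ and using $\dbar^2=0$ and $(\dbar^\star)^2=0$ to get orthogonality refines this into the three-term decomposition \eqref{pave29111}.

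For the smooth decomposition \eqref{pave291}, given $\psi\in \cC^\infty_{p,q}(X,(L,h_L))$ in the conic sense, apply (i) to write $\psi=\psi_H+\Delta''\phi$ with $\psi_H\in \Ker\Delta''$ and $\phi\in L^2$. Since $\Delta''\psi_H=0$ with $0$ trivially smooth in the conic sense, Corollary \ref{smoothn} shows $\psi_H$ is smooth in the conic sense, and applying the same corollary to $\Delta''\phi=\psi-\psi_H$ shows that $\phi$, and hence $\Delta''\phi$, is smooth in the conic sense. This gives the direct sum \eqref{pave291}, the orthogonality being inherited from the $L^2$ decomposition.

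The delicate point, and where I expect most of the work to live, is the functional-analytic setup that underlies the above scheme: one must know that $\dbar$ and $\dbar^\star$ are closed operators in the $L^2$-sense with $(\dbar)^{\star\star}=\dbar$ and $(\dbar^\star)^\star=\dbar$, so that $\Delta''$ is self-adjoint and the decomposition into $\Imm\dbar$ and $\Imm\dbar^\star$ is unambiguous, and that the integration-by-parts identity of Corollary \ref{int} holds for arbitrary forms in the domains of these operators. The natural strategy is to combine the cut-off functions $\mu_\ep$ of Lemma \ref{cutoff} (which restrict one to forms with compact support away from $Y$) with the $G$-invariant convolution regularisation described just before Theorem \ref{CoHo1} (which produces approximations by forms smooth in the conic sense). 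Because $\omega_\cC$ has finite volume near $Y$, the contribution of $\partial \mu_\ep$ to the commutators $[\dbar,\mu_\ep]$ and $[\dbar^\star,\mu_\ep]$ can be made arbitrarily small, and thus every form in $\Dom(\dbar)$ (resp.\ $\Dom(\dbar^\star)$) is approximated in the graph norm by forms smooth in the conic sense with compact support in $X_0$. With this density in hand, symmetry and self-adjointness of $\Delta''$ follow from Corollary \ref{int}, and the rest of the proof runs as above.
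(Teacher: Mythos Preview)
Your proposal is correct and follows essentially the same approach the paper indicates: the authors explicitly decline to give a formal argument, saying the proof ``follows as in the non-singular case (by using the ellipticity of the Laplace operator combined with the three big theorems mentioned above)'' and pointing to \cite{bookJP}, Chapter~6, which is precisely the template you have written out. Your expansion---finite-dimensionality of $\Ker\Delta''$ via G{\aa}rding plus Rellich, closedness of $\Imm\Delta''$ by the usual contradiction argument, refinement into the three-term sum, and Corollary~\ref{smoothn} for the smooth decomposition---is exactly what the paper leaves implicit, and your flagged ``delicate point'' about density and self-adjointness is handled in the paper only through the orbifold machinery (Proposition~\ref{orbderiv}, Corollary~\ref{int}, and the $G$-invariant convolution) that you correctly invoke.
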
  	

\noindent We will not give a formal argument for this statement, because it follows as in the non-singular case (by using the ellipticity of 
the Laplace operator combined with the three big theorems mentioned above). Similarly, existence of a positive constant $C> 0$ such that the familiar elliptic estimate 
\begin{equation}\label{van36}
	\int_X|u|^2e^{-\varphi_L}dV_\cC\leq C \int_X|\Delta'' _{h_L} u|^2e^{-\varphi_L}dV_\cC
\end{equation}
holds true for all $L^2$-forms orthogonal to $\Ker \Delta'' _{h_L}$ follows as consequence of the corresponding statement in non-singular setting.
\medskip

\noindent To end this section, we collect a few results concerning the $L^2$ theory for the data $(L, h_L)$ and $(X, \omega_\cC)$. The first and foremost of them is the following statement, due to J.-P. Demailly.
\begin{thm}\cite{bookJP}\label{JP} Let $v\in L^2_{n, q}(X, L)$ be an $L$-valued $(n, q)$-form such that $\dbar v= 0$. We assume that $i\Theta(L, h_L)\geq 0$ and that the coefficients $q_i\in ]0, 1[$. Moreover, suppose that the integrability condition 
	$$\int_X\langle[\theta_L, \Lambda_{\omega_\cC}]^{-1}v, v\rangle e^{-\varphi_L}dV_\cC< \infty$$
	holds true. Then there exists $u\in L^2_{n, q-1}(X, L)$ such that 
	\begin{equation}
		\dbar u= v, \qquad \int_X|u|^2e^{-\varphi_L}dV_\cC\leq \int_X\langle[\theta_L, \Lambda_{\omega_\cC}]^{-1}v, v\rangle e^{-\varphi_L}dV_\cC.
	\end{equation}
\end{thm}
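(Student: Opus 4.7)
The plan is to adapt Demailly's classical duality argument (see \cite{bookJP}, Chapter VIII) to the conic setting, using as main ingredients the Hodge decomposition of Theorem \ref{CoHo1} and the Bochner formula of Proposition \ref{bochner}.

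I would begin by setting up the duality. By Theorem \ref{CoHo1}, $\dbar$ and $\dbar^\star$ acting on $L^2$ $(n,q)$-forms with values in $L$ have closed range, and
\begin{equation*}
L^2_{n,q}(X, L) = \mathcal{H}_{n,q}(X, L) \oplus \Imm \dbar \oplus \Imm \dbar^\star .
\end{equation*}
For any $w \in \Dom(\dbar^\star)$, decompose $w = w_1 + w_2$ with $w_1 \in \ker \dbar$ and $w_2 \in (\ker \dbar)^\perp = \Imm \dbar^\star$. Writing $w_2 = \dbar^\star \gamma$ forces $\dbar^\star w_2 = 0$, whence $w_1 \in \Dom(\dbar^\star)$ and $\dbar^\star w_1 = \dbar^\star w$. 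Since $\dbar v = 0$ means $v$ is orthogonal to $\Imm \dbar^\star$, we find $\langle v, w\rangle = \langle v, w_1\rangle$.

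Setting $B := [\theta_L, \Lambda_{\omega_\cC}] \geq 0$, Cauchy--Schwarz gives
\begin{equation*}
|\langle v, w\rangle|^2 \leq \Big(\int_X\langle B^{-1} v, v\rangle e^{-\varphi_L} dV_{\omega_\cC}\Big) \cdot \Big(\int_X\langle B w_1, w_1\rangle e^{-\varphi_L} dV_{\omega_\cC}\Big) .
\end{equation*}
I would then apply the conic Bochner formula (Proposition \ref{bochner}) to $w_1$: combined with $\dbar w_1 = 0$ and the non-negativity of $\|D'_{h_L} w_1\|^2 + \|(D'_{h_L})^\star w_1\|^2$, this yields
\begin{equation*}
\int_X\langle B w_1, w_1\rangle e^{-\varphi_L} dV_{\omega_\cC} \leq \|\dbar^\star w_1\|^2 = \|\dbar^\star w\|^2.
\end{equation*}
Combining, the assignment $\dbar^\star w \mapsto \langle w, v\rangle$ becomes a well-defined bounded antilinear functional on $\Imm \dbar^\star \subset L^2_{n,q-1}(X, L)$; Hahn--Banach and Riesz representation then produce $u \in L^2_{n,q-1}(X,L)$ with $\dbar u = v$ and the announced $L^2$ estimate.

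The main technical obstacle is justifying the Bochner inequality for a general $w_1 \in \ker \dbar \cap \Dom(\dbar^\star)$, which is a priori only $L^2$, whereas Proposition \ref{bochner} is stated for forms smooth in conic sense. I would overcome this by approximating $w_1$ in the graph norm $\|\cdot\| + \|\dbar\,\cdot\| + \|\dbar^\star \cdot\|$ using the local ramified covers $\pi_i: U_i \to V_i$ and the $G$-invariant mollification procedure employed just before Theorem \ref{CoHo1}; harmonic/Hodge projection onto $\ker \dbar$ combined with closed range preserves the constraint $\dbar w_1 = 0$ in the limit. A cleaner alternative would be to apply the classical Demailly theorem on the complete approximating metrics $\omega_{Y, \epsilon}$ of \eqref{pave35} and pass to the limit $\epsilon \to 0$, using that $[\theta_L, \Lambda_{\omega_{Y,\epsilon}}]^{-1}$ is dominated by $[\theta_L, \Lambda_{\omega_\cC}]^{-1}$ on $(n,q)$-forms when $\theta_L \geq 0$, so that the integrability hypothesis on $v$ persists uniformly in $\epsilon$.
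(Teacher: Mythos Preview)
Your proposal is correct, and you actually sketch \emph{two} valid routes. The paper itself does not give a proof here: it simply records that the result ``is done by an approximation process (by reducing to the complete case)'' and refers to \cite{Dem12}. Your ``cleaner alternative''---solve the $\dbar$-equation for the complete metrics $\omega_{Y,\ep}= \omega_\cC+ \ep\omega_\cP$ using the classical Demailly theorem and pass to the limit, exploiting the monotonicity of $\omega\mapsto \langle[\theta_L,\Lambda_\omega]^{-1}v,v\rangle_\omega dV_\omega$ on $(n,q)$-forms---is exactly the approach the paper has in mind.

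Your primary route is genuinely different: you bypass the approximation by complete metrics and run the duality argument \emph{directly} in the conic setting, using the paper's own tools (the conic Hodge decomposition, Theorem~\ref{CoHo1}, and the conic Bochner formula, Proposition~\ref{bochner}). This is a nice use of the machinery developed in Section~\ref{hodge} and is arguably more intrinsic. The only cost, as you correctly flag, is that one must justify the Bochner--Kodaira--Nakano inequality for a general $w_1\in\ker\dbar\cap\Dom(\dbar^\star)$ rather than a conic-smooth form; your proposed fix via $G$-invariant mollification on the ramified covers (density of conic-smooth forms in the graph norm) is the natural way to handle this, and is in the spirit of the approximation argument preceding Theorem~\ref{CoHo1}. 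Note also that Corollary~\ref{kompsup} already records that Bochner holds for the $\ker\dbar$-projection of a form with compact support in $X\setminus Y$, which covers the test forms you need after a standard density reduction.
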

This is done by an approximation process (by reducing to the complete case) and it holds in much more general setting, see \cite{Dem12}.
\medskip

\noindent In our specific situation, an important observation is that is would be sufficient to solve the equation
\begin{equation}\label{van28}
	\dbar u= v
\end{equation}
in the sense of distributions on $X\setminus Y$, provided that $u$ is in $L^2$. Again, this can be seen as in the usual, non-singular setting. Indeed, let $\phi$ be any smooth $(0, n-q)$ form with values in the dual bundle $L^\star$ of the bundle $L$. We have 
\begin{equation}\label{van29}
	\int_X\dbar u\wedge (\mu_\ep\phi)= \int_X v\wedge (\mu_\ep\phi)
\end{equation}
so in order to show that \eqref{van28} holds globally on $X$ it would be enough to prove that we have 
\begin{equation}\label{van30}
	\lim_\ep \int_Xu\wedge \phi \wedge \dbar \mu_{\ep}=0.
\end{equation}
This is however clear, by Cauchy-Schwarz: we have $\displaystyle \sup_{X\setminus Y}|\phi|^2_{\omega_\cC}e^{\varphi_L}< \infty$ since 
the coefficients $q_i$ corresponding to the singularities of $h_L$ are positive and $\phi$ is smooth on $X$. Thus \eqref{van30} boils down to showing that the limit of $\displaystyle \int_X|\dbar \mu_{\ep}|^2_{\omega_\cC}dV_\cC$ is equal to zero. But this is clear by a quick computation that we skip.

\bigskip

\noindent We introduce the following notion, which -so to speak- represents the formalisation of the examples that 
we discuss afterwards.  

\begin{defn}\label{currents}
 A $(p, q)$-current $T$ of with values in $(L, h_L)$ is simply a $L$-valued
 current such that there exist a constant $C> 0$ and a positive integer $d$ such that the inequality
	\begin{equation}\label{deck5}
		\left|\int_{X}T\wedge \ol {\star \phi}e^{-\varphi_L} \right|^2\leq C \sum_{s=0}^d \sup_{X\setminus Y}|\nabla ^s\phi|^2_{h_L, \omega_\cC}   
	\end{equation}
holds for all $(p, q)$-forms $\phi$, with values in $L$ which moreover are smooth in conic sense. Here $h_0$ is a non-singular metric on $L$.
\end{defn}
\smallskip

\begin{remark} {\rm Instead of the expression \eqref{deck5} we could have as well said that $T$ acts on $L^\star$-valued $(n-p, n-q)$ forms
smooth in conic sense.}
\end{remark}

\begin{remark} {\rm A current $T$ as in the Definition \ref{currents} is ``more singular" than a $L$-valued current, since it only acts on forms which are smooth in conic sense.}
\end{remark}
\medskip

\noindent The condition \eqref{deck5} is equivalent to the following.
\begin{proposition}\label{currents2}
A $(p, q)$-current $T$ with values in $(L, h_L)$ is given by a collection of
	invariant currents $T_i$ on $U_i$ (where $i\in I$) such that for each compact $K\subset U_i$ there exists a constant $C_K> 0$ and a number $d\in\mathbb N$ such that following holds
	\begin{equation}\label{deck51}
		\left|\int_{U_i}\frac{1}{w_i^{qm}}T_i\wedge \ol \rho_i \right|^2\leq C_K\sum_{s=0}^d \sup_{V_i\setminus Y}|\nabla^s\rho_i|^2   
	\end{equation}
	for all $(n-q, n-p)$-forms $\rho$ which are $L$-valued, with compact support in $V_i$ and smooth in conic sense, where $\displaystyle \rho_i:= \frac{1}{w_i^{mq}}\pi_i^\star(\rho)$.
\end{proposition}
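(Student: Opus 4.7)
The plan is to establish the equivalence via a partition of unity argument combined with the local ramified covers $\pi_i:U_i\to V_i$. The core observation is the correspondence between $L$-valued forms on $V_i$ smooth in conic sense and smooth forms on $U_i$ that transform appropriately under the Galois group $G_i$ of $\pi_i$: by Definition \ref{forms}, $\phi$ is smooth in conic sense if and only if $\phi_i := w_i^{-qm}\pi_i^\star\phi$ is smooth on $U_i$, and conversely any smooth $\rho_i$ on $U_i$ for which $w_i^{qm}\rho_i$ is $G_i$-invariant descends to a smooth-in-conic form on $V_i$. Proposition \ref{orbderiv} then ensures that sup-norms of conic-sense derivatives of $\phi$ on $V_i\setminus Y$ are comparable with sup-norms of ordinary derivatives of $\phi_i$ on compact subsets of $U_i$.

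For the implication \eqref{deck5} $\Rightarrow$ \eqref{deck51}, I would fix a partition of unity $(\theta_i)$ subordinate to $(V_i)$ and define $T_i$ on $U_i$ by duality: for any smooth compactly supported test form $\rho_i$ on $U_i$, I set
$$\int_{U_i}\frac{1}{w_i^{qm}}T_i\wedge\overline{\rho_i}\ :=\ m\int_{V_i} T\wedge\overline{\star\rho}\, e^{-\varphi_L},$$
where $\rho$ is the descent of the $G_i$-averaged form $\frac{1}{|G_i|}\sum_{\gamma\in G_i}\gamma^\star(w_i^{qm}\rho_i)$, which is $G_i$-invariant by construction and thus comes from a form on $V_i$ smooth in conic sense. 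The $G_i$-invariance of the resulting $T_i$ is then tautological, and the local estimate \eqref{deck51} follows from applying \eqref{deck5} to the localized test form and translating the sup-norms through Proposition \ref{orbderiv}.

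For the converse, given a collection $(T_i)$ of invariant currents satisfying \eqref{deck51}, I would reconstruct $T$ by declaring, for $\phi$ smooth in conic sense with values in $L$,
$$\int_X T\wedge\overline{\star\phi}\, e^{-\varphi_L}\ :=\ \sum_i\frac{1}{m}\int_{U_i}\frac{1}{w_i^{qm}}T_i\wedge\overline{\pi_i^\star(\theta_i\star\phi)},$$
the integrand on the right being well-defined because $\pi_i^\star(\theta_i\star\phi)/w_i^{qm}$ extends smoothly to $U_i$ by the smoothness-in-conic-sense hypothesis on $\phi$. Summing the local estimates and invoking Proposition \ref{orbderiv} once more to bound each $\sup|\nabla^s(\pi_i^\star(\theta_i\star\phi)/w_i^{qm})|$ by $\sup_{X\setminus Y}|\nabla^s\phi|_{h_L,\omega_\cC}$ yields the global estimate \eqref{deck5}.

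The principal technical point, which I expect will require the most care, is the bookkeeping of weight factors: the singular measure $e^{-\varphi_L}dV_{\omega_\cC}$ on $V_i$, its pullback $e^{-\varphi_{L,0}\circ\pi_i}\prod_\alpha|w_i^\alpha|^{-2mq_\alpha}\pi_i^\star dV_{\omega_\cC}$ on $U_i$, and the compensating factor $w_i^{-qm}$ appearing in \eqref{deck51} must conspire so that the pairings are finite and compatible under $G_i$-averaging. This is precisely where the conic-sense regularity assumption on test forms enters, and it is what distinguishes our notion of current with values in $(L,h_L)$ from the usual distribution-theoretic $L$-valued current.
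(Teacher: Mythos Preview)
Your proposal is correct and follows the same approach as the paper: the key point is the comparability of conic-sense sup-norms on $V_i\setminus Y$ with ordinary sup-norms on $U_i$ via Proposition~\ref{orbderiv}, which is in fact the entirety of the paper's one-line proof. Your partition-of-unity and Galois-averaging constructions are a faithful elaboration of what the paper leaves implicit (the only cosmetic point being that the normalising factor in your reconstruction formula should be the degree $\delta_i$ of $\pi_i$ rather than $m$, as in the remark following the proposition).
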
	

\begin{proof} This is due to the fact that we have 
\begin{equation}\label{van38}
\sum_{s=0}^d \sup_{V_i\setminus Y}|\nabla^s _{h_{L}}\rho|^2\simeq \sum_{s=0}^d \sup_{U_i}|\nabla^s\rho_i|^2,
\end{equation} 
and one can see by the same arguments as in Proposition \ref{orbderiv}.
\end{proof}
\medskip

\begin{remark} {\rm  We have the following alternative description of $T$ in terms of 
the local currents $(T_i)$.
Let $\phi$ be an $L$-valued form, smooth in conic
sense and let $T$ be a current as in Definition \ref{currents}. Then we have
\begin{equation}\label{deck6}
	\int_XT\wedge \ol\phi e^{-\varphi_L}= \sum_{i\in I}\frac{1}{\delta_i}\int_{U_i}\frac{\theta_i\cdot (T_i\wedge \ol \phi_i)}{w_i^{qm}}
\end{equation}  
where we recall that $\displaystyle \phi_i= \frac{1}{w_i^{mq}}\pi_i^\star(\phi)$ and $(\theta_i)$ is a partition of unit corresponding to $(V_i)_{i\in I}$. The integer
$\delta_i$ is the degree of the map $\pi_i$. Note that $\displaystyle \frac{T_i\wedge \ol \phi_i}{w_i^{qm}}$ is considered as a $(n,n)$-current on $U_i$ by using the smooth metric $h_0$ on $L_0 := L- \sum q_i Y_i$, where $h_0 := h_L e^{\sum q_i\log |s_{Y_i}|^2}$.
}
\end{remark}

\medskip

\subsubsection{Examples}\label{subexample} We are discussing next a few examples.
\smallskip

\noindent $\bullet$  
Let $\Theta$ be a smooth $L$-valued $(p,q)$-form on $X$.  In general, $\Theta$ is not smooth in the conic sense --the case $p=n$ and $q_i< 1$ is exceptional. However, $\Theta$ induces a current in conic sense (via 
its principal value) as follows:
\begin{equation}\label{van411}
	\int_X\Theta\wedge \ol \rho:= \lim_{\ep\rightarrow 0}\sum_i\frac{1}{\delta_i}\int_{U_i}\mu_\ep\frac{\theta_i}{w_i^{qm}}\pi_i^\star\Theta\wedge \ol \rho_i  ,
\end{equation}
where $\theta_i$ is a partition of unity.
\smallskip

\noindent $\bullet$ More generally,  let $E= \sum E_i$ be a divisor on $X$ such that $E+Y$ is snc (so in particular $E$ and $Y$ do not have any common components).  
Let $\Theta$ be an $L$-valued $(p, q)$-form on $X$ with log-poles along $E+Y$ and smooth on $X\setminus (E+Y)$. 
$\Theta$ induces naturally an $(L, h_L)$-valued current as follows: \begin{equation}\label{van4111}
	\int_X\Theta\wedge \ol \rho:= \lim_{\ep\rightarrow 0}\sum_i\frac{1}{\delta_i}\int_{U_i}\mu_\ep \frac{\theta_i}{w_i^{qm}}\pi_i^\star\Theta\wedge \ol \rho_i.
\end{equation}
\smallskip

\noindent We do not detail here the verification of the fact that the equalities \eqref{van411}
and \eqref{van411} define currents in conic setting but rather refer to the Appendix where this is extensively discussed.
\medskip

\smallskip
		
\noindent $\bullet$ Let $E= \sum E_i$ and $W$ be a divisor on $X$ and an additional hypersurface, respectively such that $E+ W$ is snc. If $\alpha$ is an $L$-valued $(n-1, 0)$-form with log-poles along $E|_W$ on $W$, then the corresponding current is defined by the formula   
	\begin{equation}\label{deck8}
		\int_{X}T\wedge \ol \phi:= \sum_i\frac{1}{\delta_i}\int_{W_i}\theta_i\frac{1}{w_i^{qm}}\pi_i^\star\alpha\wedge \ol \phi_i
	\end{equation}
	where $W_i:= \pi_i^{-1}(W\cap V_i)$. Because of the type of $\alpha$ it is immediate to verify that \eqref{deck5} holds true (with $d= 0$).
	
\smallskip
		
\noindent $\bullet$	
 Let $\lambda$ be a smooth, $(n,1)$ form with values in the bundle $L+ kE$. A more general version of the current introduced at first bullet above is given as follows
	\begin{equation}\label{deck9}
		\int_{U_i}\frac{1}{w_i^{qm}}T_i\wedge \ol \phi:= \lim_{\ep\to 0}\frac{1}{d_i}\int_{U_i}\frac{1}{w_i^{qm}}\pi_i^\star\Big(\theta_\ep\frac{\lambda}{s_E^k}\Big|_{V_i}\Big)\wedge \ol \phi
	\end{equation}
	where $\theta_i$ is a sequence of cutoff functions with compact support in $X\setminus E$ and converging to the identity function of this open subset of $X$. By using the argument involving the Taylor expansion of $\theta_\ep$ we infer that the global object $T$ defined by \eqref{deck9} has the following property 
	\begin{equation}\label{deck10}
		\left|\int_XT\wedge \ol \phi e^{-\varphi_L}\right|^2\leq C\sum_{i\in I}\sum_{s=0}^k\sup_{U_i}|\nabla^s\phi_i|^2.
	\end{equation}
	where $\phi$ is an $L$-valued form of type $(n-1, 0)$, smooth in conic sense
	and $\phi_i$ are defined in $\eqref{deck4}$. Again, the verification of \eqref{deck10}
	is detailed at the end of the Appendix.
	\bigskip
	
	\noindent The next statement can be seen as "mise en bouche" for the type of arguments in the Appendix.
	
	\begin{lemme}\label{taylorde} Let $\phi$ be a negative, $\cC^\infty$ real function defined on the unit disk in $\CC$. 
		We define 
		$${\mathcal D}_\ep:= \{z\in \mathbb D : \ep< |z|e^{-\phi(z)}< 2\ep\}, \qquad {\mathcal D}_{\ep, r}:= \{z\in \mathbb D : \ep< |z|e^{-\phi(z)}< r\}.$$
		The following hold true.
		\begin{enumerate}
			\smallskip
			
			\item[\rm (a)] For any smooth function $f$ and any $\delta< 1$ we have $\displaystyle \lim_{\ep\to 0}\int_{{\mathcal D}_\ep}
			f(z)\frac{d\lambda}{z|z|^{2\delta}}= 0$.
			\smallskip
			
			\item[\rm (b)] For any smooth function $f$, any $\delta< 1$ and any fixed $r> 0$ the set of complex numbers $\displaystyle \left(\int_{{\mathcal D}_{\ep, r}}
			f(z)\frac{d\lambda}{z|z|^{2\delta}}\right)_{\ep> 0}$ is bounded.
		\end{enumerate}
	\end{lemme}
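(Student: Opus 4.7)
\textbf{Plan for Lemma \ref{taylorde}.}
The plan is to change the radial coordinate to $s := |z|e^{-\phi(z)}$, so that the region $\mathcal D_\ep$ becomes the straight annulus $\{\ep < s < 2\ep\}$, and then to exploit the cancellation $\int_0^{2\pi} e^{-i\theta}\, d\theta = 0$ to show that the $\theta$-integral at each fixed $s$ is one order smaller than the naive size of the integrand.

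Writing $z = \rho e^{i\theta}$ and $\Phi(\rho,\theta):=\rho e^{-\phi(\rho e^{i\theta})}$, smoothness of $\phi$ together with $\partial_\rho\Phi(0,\theta)=e^{-\phi(0)}\neq 0$ gives, by the implicit function theorem, a smooth inverse $\rho=\rho(s,\theta)$ defined for small $s$, with Taylor expansion
\[
\rho(s,\theta)=a\,s+s^2 r_2(\theta)+O(s^3),\qquad a:=e^{\phi(0)},
\]
the \emph{leading} coefficient being independent of $\theta$. Using $\dfrac{d\lambda(z)}{z|z|^{2\delta}}=e^{-i\theta}\rho^{-2\delta}\,d\rho\,d\theta$, the integral in (a) becomes
\[
\int_\ep^{2\ep}\!\!\int_0^{2\pi} f(\rho(s,\theta)e^{i\theta})\,e^{-i\theta}\,\rho(s,\theta)^{-2\delta}\,\partial_s\rho(s,\theta)\,d\theta\,ds.
\]

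For fixed $s\in[\ep,2\ep]$ I would Taylor expand $f(\rho e^{i\theta})=f(0)+\rho e^{i\theta}f_z(0)+\rho e^{-i\theta}f_{\bar z}(0)+O(\rho^2)$, together with $\rho^{-2\delta}\partial_s\rho = a^{1-2\delta}s^{-2\delta}+O(s^{1-2\delta})$, and track Fourier modes in $\theta$. After multiplication by $e^{-i\theta}$, the constant-$\theta$ term $f(0)e^{-i\theta}\cdot a^{1-2\delta}s^{-2\delta}$ \emph{integrates to zero}; the only surviving leading-order contribution comes from $\rho e^{i\theta}f_z(0)\cdot e^{-i\theta}=f_z(0)\rho$, which is constant in $\theta$ and yields $2\pi a^{2-2\delta}f_z(0)\,s^{1-2\delta}+O(s^{2-2\delta})$. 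Integrating in $s$ over $[\ep,2\ep]$ produces a bound of size $O(\ep^{2-2\delta})$, which tends to zero for $\delta<1$, proving (a). For (b), the same computation over $[\ep,r_0]$ for a small fixed $r_0$ yields a bound $O(r_0^{2-2\delta})$ uniform in $\ep$, and the piece where $s\in[r_0,r]$ is independent of $\ep$ and finite because the integrand is bounded away from the origin.

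The main bookkeeping point will be the $\theta$-dependent corrections from $r_2(\theta)$ in $\rho(s,\theta)$ and from the remainder in $\partial_s\rho$: these generate non-constant Fourier modes, but each is accompanied by an extra factor of $s$, so their contributions after $\theta$-integration remain of order $O(s^{2-2\delta})$ and are harmless. Neither the sign of $\phi$ nor its higher derivatives intervene beyond the smoothness hypothesis; what is genuinely essential is the cancellation built into the $e^{-i\theta}$ factor, without which the bound $\delta<1$ would have to be sharpened to $\delta<1/2$.
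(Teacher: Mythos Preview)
Your argument is essentially correct and reaches the conclusion, but by a different route than the paper. The paper's proof is more hands-on: it first writes $f(z)=f(0)+zf_1(z)+\bar z f_{\bar 1}(z)$ and observes that the last two terms already give an integrable density on the disk (the extra factor of $z$ or $\bar z$ absorbs the $1/z$), so only the constant case $f\equiv 1$ needs work. For that case it uses the negativity of $\phi$ to decompose $\mathcal D_\ep$ as the round annulus $\{\ep<|z|<2\ep\}$ plus and minus thin ``slivers'' of the type $\{\ep e^{\phi(z)}<|z|<\ep\}$; the integral over the round annulus vanishes exactly by rotational symmetry, and the slivers are bounded by brute force. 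Your change of variable $s=|z|e^{-\phi}$ together with the Fourier analysis in $\theta$ is more systematic and, as you observe, does not use the sign hypothesis on $\phi$; the trade-off is that you must track the $\theta$-dependence of the Jacobian. One minor correction to your bookkeeping: the second-order term $r_2(\theta)$ in $\rho(s,\theta)$ carries Fourier modes $e^{\pm i\theta}$, so after multiplication by $f(0)\,e^{-i\theta}$ it \emph{does} produce a nonzero constant-$\theta$ mode and contributes at order $s^{1-2\delta}$, not $s^{2-2\delta}$ as you state; but this is the same order as the $f_z(0)$ term you already identified, so the final estimate $O(\ep^{2-2\delta})$ and hence parts (a) and (b) are unaffected.
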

	
	\begin{proof} We discuss the point (a). A first remark is that it is enough to show that the limit is question is zero for constant functions $f$ (as we see by writing $f(z)= f(0)+ zf_1(z)+ \ol z f_{\ol 1}(z)$ and observing that for the second and third term things are immediate).
		\smallskip
		
		\noindent We take then $f= 1$; since $\varphi$ is negative, we can write
		\begin{equation}\label{van1}
			\int_{{\mathcal D}_\ep}\frac{d\lambda}{z|z|^{2\delta}}= \int_{\ep< |z|< 2\ep}\frac{d\lambda}{z|z|^{2\delta}}- \int_{2\ep e^{\phi(z)}< |z|< 2\ep}\frac{d\lambda}{z|z|^{2\delta}}+
			\int_{\ep e^{\phi(z)}< |z|< \ep}\frac{d\lambda}{z|z|^{2\delta}}  
		\end{equation}
		and the first term of the RHS is equal to zero. We can assume that $1\leq 2\delta< 2$ and it is enough to show that we have
		\begin{equation}\label{van2}
			\lim_{\ep\to 0}\int_{\ep(1- C\ep)< |z|< \ep}\frac{d\lambda}{|z|^{1+2\delta}}= 0,  
		\end{equation}
		where $C$ is a positive constant depending on the function $\varphi$. This in turn is obvious. 
		\smallskip
		
		\noindent We leave the rest of the proof of the lemma to the interested readers.
		
	\end{proof}
\medskip

\noindent As  application, we obtain the following result. 
Let $\beta$ be an $L$-valued form of type $(n-1,1)$ and logarithmic poles along $E+Y$. Then we have
\begin{equation}\label{add}
	\lim_{\ep\to 0}\int_X\theta_\ep D'\beta\wedge \ol\gamma e^{-\varphi_L}= 0
\end{equation}
for any holomorphic $L^2$ form $\gamma$, where $\theta_\ep$ is the standard cut-off functions.

\smallskip

\section{De Rham-Kodaira decompostion for conic currents}\label{current}

In this subsection, we would like to show that the results of de Rham-Kodaira in \cite{dRKo} concerning Hodge decomposition for currents on compact manifolds have a complete analogue in our setting, i.e. for $(X, \omega_\cC)$ and $(L, h_L)$ in the setting of Subsection \ref{subsectconic}.

\noindent By Theorem \ref{CoHo1}, for any $L$-valued $(p, q)$-form $\phi$, smooth in conic sense, we have 
\begin{equation}\label{deck11}
	\phi= \cH(\phi)+ \Delta''\psi
\end{equation}
where $\cH(\phi)$ is the harmonic part of $\phi$ and $\psi$ is an $L$-valued $(p, q)$-form. If we add to $\psi$ a harmonic form, the equality 
\eqref{deck11} still holds. Thus, as in the standard case we introduce the Green operator.

\begin{defn} We denote by $\cG(\phi)$ 
the unique $L$-valued $(p, q)$-form which is orthogonal to the harmonic space and such that 
\begin{equation}\label{entdeck11}
\phi= \cH(\phi)+ \Delta''\big(\cG(\phi)\big)
\end{equation}
holds true.
\end{defn}

\begin{remark}{\rm Since harmonic forms are in particular smooth 
in conic sense, we infer that $\cG(\varphi)$ has this property as well. 
}\end{remark}
\medskip  

\noindent It is easy to see that $\cG$ commutes with $\dbar$.  As a consequence, if $\dbar \phi= 0$, then $\dbar \cG(\phi)= 0$ and so the equality 
\eqref{deck11} becomes
\begin{equation}\label{deck14}
	\phi= \cH(\phi)+ \dbar S
\end{equation}
where $S:= \dbar^\star \cG(\phi)$. The same is true for forms with value in the dual $L^\star$ of $L$.
\medskip

\noindent Let $T$ be an $(L, h_L)$-valued $(p,q)$-current. Precisely as in the untwisted case \cite{dRKo} (i.e. without any "$L$''), we can define the \emph{harmonic part} of $T$ as
\begin{equation}\label{pave71}
	\cH(T):= \sum_i \langle T, \xi_i\rangle \xi_i
\end{equation}
where $(\xi_i)_i$ is a basis of $L^2$ harmonic $L$-valued forms of type $(p, q)$ and where the notation in \eqref{pave71} is
\begin{equation}\label{pave72}
	\langle T, \xi\rangle:= \int_X T\wedge \ol {\gamma_\xi},
\end{equation}
and $\gamma_\xi:= \star \xi$ and so $\ol {\gamma_\xi} = \sharp \xi$ becomes an $(n-p, n-q)$ form with values in $L^\star$. 
\bigskip

Let $T$ be a $(p, q)$-current with values in $(L, h_L)$, cf. Definition \ref{currents}. We define next by duality the Green operator on $T$ by the formula
\begin{equation}\label{pave17}
\int_X \cG(T) \wedge \ol{\star \phi} := \int_{X}T\wedge\ol{\star \cG (\phi)},
\end{equation}
for any  $L$-valued $(p, q)$-form $\phi$, smooth in conic sense. 
This induces a decomposition of the current $T$ as in the usual case
\begin{equation}\label{pave18}
	T= \cH(T)+ \Delta''\big(\cG(T)\big).
\end{equation} 
\begin{defn} The equality \eqref{pave18} is called
de Rham-Kodaira decomposition of the current $T$. 
\end{defn}
\medskip

\begin{remark}{\rm  If moreover $T$ is closed, then we can write
	\begin{equation}\label{pave19}
		T= \cH(T)+ \dbar T_{1}
	\end{equation}
	where $\displaystyle T_{1}= \dbar^\star\big(\cG(T)\big)$. This can be seen as follows: thanks to \eqref{diffdep}, we know that $\cH(T)$
is $\dbar$-closed. Then we have $\Delta'' \dbar \big(\cG(T)\big)=0$. It follows that $\dbar \big(\cG(T)\big)$ is smooth in orbifold sense and then we get $\dbar^\star \dbar \big(\cG(T)\big) =0$ by the usual integration by parts.}
\end{remark}
\medskip

\noindent We have the following compatibility relation between the notions we have introduced above.

\begin{lemme}\label{commu}
	Let $s$ be a  $L$-valued $(p,q)$-current and $t$ be a $L^\star$-valued $(n-p, n-q)$-form smooth in conic sense. We have 
\begin{equation}\label{com}
\int_X \cG(s)\wedge t = \int_X s\wedge {\cG(t)}.
\end{equation}
\end{lemme}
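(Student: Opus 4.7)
The strategy is to first prove the analogous identity when $s$ is itself smooth in conic sense, and then extend to currents by unfolding the duality definition of $\cG$.

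First I would establish the following ``smooth version'': for an $L$-valued $(p,q)$-form $\alpha$ and an $L^\star$-valued $(n-p,n-q)$-form $\beta$, both smooth in conic sense,
\begin{equation}\label{smoothG}
\int_X \cG(\alpha)\wedge \beta \;=\; \int_X \alpha\wedge \cG(\beta).
\end{equation}
Indeed, by \eqref{entdeck11} applied to $\beta$ (for the Green operator on $L^\star$-valued forms, whose existence follows from Theorem \ref{CoHo1} applied to $(L^\star,h_L^{-1})$ and from the fact that $\sharp$ is an isometry intertwining harmonic projections, cf.\ Proposition \ref{sharppair}),
\[
\int_X \cG(\alpha)\wedge \beta \;=\; \int_X \cG(\alpha)\wedge \cH(\beta) \;+\; \int_X \cG(\alpha)\wedge \Delta''\cG(\beta).
\]
The first term vanishes because $\cH(\beta)=\sharp \xi$ for some harmonic $L$-valued form $\xi$ (Proposition \ref{sharppair}), and so it equals $\langle \cG(\alpha),\xi\rangle_{L^2}=0$. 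For the second term, both $\cG(\alpha)$ and $\cG(\beta)$ are smooth in conic sense (they are harmonic plus the solutions of an elliptic equation with smooth right-hand side, cf.\ Corollary \ref{smoothn}), hence the integration-by-parts formula of Corollary \ref{int} applies and yields
\[
\int_X \cG(\alpha)\wedge \Delta''\cG(\beta) \;=\; \int_X \Delta''\cG(\alpha)\wedge \cG(\beta) \;=\; \int_X \bigl(\alpha-\cH(\alpha)\bigr)\wedge \cG(\beta).
\]
The harmonic term $\int_X \cH(\alpha)\wedge \cG(\beta)$ vanishes by the same orthogonality argument, giving \eqref{smoothG}.

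Second, I would use the duality definition \eqref{pave17}. Since $\sharp$ is a pointwise isomorphism between $L$-valued $(p,q)$-forms and $L^\star$-valued $(n-p,n-q)$-forms and preserves the class of conically smooth forms, we may write $t=\sharp\phi$ for a unique $L$-valued $\phi$ smooth in conic sense. Then by the definition of $\cG$ on the current $s$,
\begin{equation}\label{step2}
\int_X \cG(s)\wedge t \;=\; \int_X \cG(s)\wedge \sharp\phi \;=\; \int_X s\wedge \sharp\cG(\phi).
\end{equation}
It therefore remains to identify $\sharp\cG(\phi)$ with $\cG(t)=\cG(\sharp\phi)$. Both forms are $L^\star$-valued, smooth in conic sense, and orthogonal to the space of $L^\star$-valued harmonic forms (using that $\sharp$ is an isometry intertwining the two harmonic spaces). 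Moreover, since $\sharp$ commutes with $\Delta''$ (as follows by applying Corollary \ref{int} twice, or directly from Proposition \ref{bochner} and the compatibility of $\sharp$ with $D'_{h_L}$ and $\dbar$), we have
\[
\Delta''\bigl(\sharp\cG(\phi)\bigr) \;=\; \sharp\Delta''\cG(\phi) \;=\; \sharp\bigl(\phi-\cH(\phi)\bigr) \;=\; \sharp\phi-\cH(\sharp\phi).
\]
By uniqueness of the Green operator this forces $\sharp\cG(\phi)=\cG(\sharp\phi)=\cG(t)$, and combining with \eqref{step2} yields \eqref{com}.

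The main obstacle I anticipate is a bookkeeping one: checking that all the Hodge-theoretic notions ($\cH$, $\cG$, $\Delta''$, harmonicity, conic smoothness) are compatible under the antilinear pairing $\sharp$ between the $L$-valued and $L^\star$-valued sides, and that Corollary \ref{int} does apply to the pair of conically smooth forms in \eqref{smoothG} so that integration by parts is legal. Once this is verified, the identity drops out essentially by construction.
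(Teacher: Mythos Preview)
Your argument is correct, but it takes a longer detour than the paper's. The paper observes that your ``smooth version'' \eqref{smoothG} already works verbatim when $s$ is a current: one writes $t=\cH(t)+\Delta''\cG(t)$, notes that $\int_X\cG(s)\wedge\cH(t)=0$ (since $\cH(t)=\sharp\xi$ for some harmonic $\xi$ by Proposition~\ref{sharppair}, and $\cG(s)$ is orthogonal to the harmonic space), and then passes $\Delta''$ across via
\[
\int_X \cG(s)\wedge \Delta''\cG(t)\;=\;\int_X \Delta''\cG(s)\wedge \cG(t)\;=\;\int_X \bigl(s-\cH(s)\bigr)\wedge \cG(t),
\]
where the first equality is simply the \emph{definition} of $\Delta''$ on the current $\cG(s)$, and the second is \eqref{pave18}. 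The remaining harmonic term vanishes for the same reason. So Corollary~\ref{int} is not needed here, and your entire second stage (writing $t=\sharp\phi$, invoking the duality definition \eqref{pave17}, and proving the commutation $\sharp\cG=\cG\sharp$) is superfluous: the integration-by-parts step that you carefully justify for smooth forms is tautological for currents.

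What your approach does buy is an explicit proof of $\sharp\cG=\cG\sharp$, which is a pleasant side fact; but for the lemma at hand the paper's three-line computation is strictly shorter and avoids this bookkeeping entirely.
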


\begin{proof} 
Note first that, since $\cG (s)$ is orthogonal to the harmonic space, by Proposition \eqref{sharppair}, we have 
$$\int_X \cG (s)\wedge \cH (t) =0 .$$
Then we have 
\begin{equation}\label{com1}
\int_X \cG (s)\wedge t = \int_X \cG(s) \wedge \Delta^{''} (\cG{t})= 
 \int_X \Delta^{''} (\cG(s)) \wedge \cG (t) =  \int_X s\wedge {\cG(t)}.
\end{equation}
\end{proof}

\subsection{Regularity of the Green kernel for certain currents}

\noindent Coming back to the case of a current induced by a form $\alpha$ with log-poles,
the Green current $\cG(\alpha)$ has a few interesting properties that we collect in our next statement.

\begin{proposition}\label{reg2}
	Let $E=\sum E_i$ be divisor on $X$ such that $E+Y$ is snc.
	Let $\alpha$ be a smooth $L$-valued $(n, q)$-form on $X\setminus (E\cup Y)$, with log pole along $E+Y$. The following statements hold true.
	\begin{enumerate}
		\smallskip
		
		\item[\rm (1)] The current $\cG(\alpha)$ is a smooth $L$-valued $(n, q)$--form on
		$X\setminus (E\cup Y)$ which moreover is $L^2$ on $X$.  
		\smallskip
		
		\item[\rm (2)] For each index $i$ the local form 
		$\displaystyle  v_i:= \frac{1}{\prod w_i^{q_im}}\pi_i^\star\left(\cG(\alpha)\right)$   
		is a solution of an equation of the following type
		\begin{equation}\label{pave8}\nonumber
			\Delta''_{sm}(v_i)= u_i  
		\end{equation}
		where $u_i$ is a $(n,q)$-form with logarithmic poles along $E+Y$.
		\smallskip
		
		\item[\rm (3)] For every $\delta> 0$ form $v_i$ belongs to the Sobolev space $W^{2, 2-\delta}$ (i.e. it has two derivatives in $L^{2-\delta}$).
		\smallskip
		
		\item[\rm (4)] The form $\tau_i:= \partial^\star v_i$ is $L^2$ and moreover
		``smooth'' with respect to the variables $w_j$ for each $j=r+1,\dots, n$, in the sense that it admits a Taylor expansion
		\begin{equation}\label{pave7}\nonumber
			\tau_i(w)= \sum_{|I|+|J|\leq N-1} w^{\prime \alpha}\ol w^{\prime \beta}\tau_{I\ol J}(w'')+
			\sum_{|I|+|J|= N} w^{\prime \alpha}\ol w^{\prime \beta}\tau_{I\ol J}(w)  
		\end{equation}
		where $$w'= (w_{r+1},\dots, w_{n}),\qquad w''= (w_{1},\dots, w_r)$$
		and $\tau_{I\ol J}$ are $L^2$.
	\end{enumerate} 
\end{proposition}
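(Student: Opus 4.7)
The plan is to reduce everything to standard elliptic regularity for the smooth operator $\Delta''_{sm}$ on each orbifold uniformization chart $\pi_i: U_i\to V_i$, via the transformation formulas from Proposition \ref{orbderiv}.

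First I would establish (2). By Subsection \ref{subexample}, $\alpha$ defines an $(L,h_L)$-valued $(n,q)$-current in the conic sense, so by Section \ref{current} the Kodaira--de Rham decomposition
\begin{equation*}
\alpha = \cH(\alpha) + \Delta''\big(\cG(\alpha)\big)
\end{equation*}
holds as an identity of currents. Harmonic forms being smooth in conic sense (Corollary \ref{smoothn}), pulling back to $U_i$ and dividing by $w_i^{qm}$ gives, via the formula $\pi_i^\star(\Delta''\phi) = w_i^{qm}\Delta''_{sm}\phi_i$ of Proposition \ref{orbderiv},
\begin{equation*}
\Delta''_{sm}(v_i) = u_i, \qquad u_i := \frac{\pi_i^\star(\alpha - \cH(\alpha))}{w_i^{qm}}.
\end{equation*}
The harmonic correction contributes a $\cC^\infty$ term on $U_i$, while $\pi_i^\star\alpha/w_i^{qm}$ inherits logarithmic poles along $\pi_i^{-1}(E+Y)$; hence $u_i$ has log poles along $E+Y$, which is (2).

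Next I would deduce (1) and (3) simultaneously. A form with logarithmic poles lies in $L^p_{loc}$ for every $p<2$ (its worst terms behave like $1/|w_j|$ with respect to the Euclidean metric on $U_i$). The standard Calder\'on--Zygmund $W^{2,p}$ estimate for the smooth elliptic operator $\Delta''_{sm}$, applied to the equation above with $u_i\in L^{2-\delta}_{loc}$, yields $v_i\in W^{2,2-\delta}_{loc}(U_i)$; this is (3). Sobolev embedding then gives $v_i\in L^s_{loc}$ for some $s>2$, so in particular $v_i$ is locally $L^2$, which globalises to $\cG(\alpha)\in L^2(X)$. Away from $\pi_i^{-1}(E\cup Y)$, the RHS $u_i$ is $\cC^\infty$, and elliptic regularity upgrades $v_i$ to smoothness there; hence $\cG(\alpha)$ is a $\cC^\infty$ form on $X\setminus(E\cup Y)$. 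The identification of this $L^2$ form with the current defined by duality in \eqref{pave17} follows from the fact that both are orthogonal to $\mathcal H_{n,q}(X_0,L)$ and satisfy $\Delta''(\cdot)=\alpha-\cH(\alpha)$, a problem that has a unique solution subject to these constraints.

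Finally, for (4), choose local coordinates $(w',w'')$ on $U_i$ with $w''$ containing the defining equations of $\pi_i^{-1}(E+Y)$ (possible since $E+Y$ is snc). Then $u_i$ is $\cC^\infty$ with respect to the $w'$-variables, with logarithmic singularities confined to $w''$. Differentiating $\Delta''_{sm}v_i = u_i$ any number of times in $w'$ commutes with $\Delta''_{sm}$ and preserves the form of the RHS, so the same $L^{2-\delta}$ elliptic estimate applied iteratively shows that all $w'$-derivatives of $v_i$ up to order $N$ lie in $L^{2-\delta}$. Applying $\partial^\star$ (which is a first-order smooth operator on $U_i$) produces $\tau_i$ with the same property, and writing a Taylor expansion of $\tau_i$ in the $w'$-variable to order $N$ yields the claimed decomposition; the remainder coefficients $\tau_{I\bar J}(w)$ are $L^2$ (in fact $L^{2-\delta}$) by construction.

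The principal obstacle is that $\cG(\alpha)$ is \emph{a priori} merely a current defined by a duality pairing against forms smooth in conic sense, yet the statement claims honest pointwise smoothness and $L^2$ integrability; one must therefore carefully check that the local $L^2$ solutions of $\Delta''_{sm}v_i=u_i$ patch compatibly with the Galois actions of the covers $\pi_i$ and produce a single global $L^2$ form representing the duality-defined $\cG(\alpha)$. The intrinsic obstruction $p<2$, forced by the non-$L^2$ character of $dz/z$-type poles, is precisely what limits the regularity in (3) to $W^{2,2-\delta}$ and makes the Taylor-type statement (4) the most one can hope for.
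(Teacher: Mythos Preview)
Your treatment of (2) and (3) is essentially the paper's. For (1) the routes diverge. The paper argues by \emph{duality}: it bounds
\[
|\langle\cG(\alpha),\Phi\rangle|=|\langle\alpha,\cG(\Phi)\rangle|\leq \|\alpha\|_{L^{2-\delta}}\,\|\cG(\Phi)\|_{L^{\frac{2-\delta}{1-\delta}}}
\]
and controls the last factor via the elliptic estimate $\|\cG(\Phi)\|_{W^{2,2}}\leq C\|\Phi\|_{L^2}$ (Poincar\'e plus G\aa rding applied to the \emph{smooth} test form $\Phi$) together with Sobolev embedding, concluding $\cG(\alpha)\in L^2$ directly. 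Your route---interior $W^{2,2-\delta}$ regularity for $v_i$, then Sobolev, then patch---can also be made to work, but you must first argue that $v_i$ is at least a distribution on $U_i$ (this comes from Proposition~\ref{currents2}) and then promote local bounds to a global $L^2$ bound on the compact $X$; the duality argument bypasses this bootstrap entirely.

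For (4) there are two gaps. First, differentiation along a constant $w'$-vector field $\Xi$ does \emph{not} commute with $\Delta''_{sm}$, whose coefficients are smooth but variable; the correct relation is
\[
\Delta''_{sm}(\Xi\cdot v_i)=\Xi\cdot u_i+\mathcal L(v_i)
\]
with a second-order commutator $\mathcal L$. The iteration works because $\mathcal L(v_i)\in L^{2-\delta}$ by (3), while $\Xi\cdot v_i\in W^{1,2-\delta}\hookrightarrow L^2$ by Sobolev for $\delta\ll1$, so G\aa rding applies again---you need this intermediate step, not commutation. Second, and this is where the actual content of (4) lies, from $\partial_{w'}^m\tau_i\in L^2$ for all $m$ it is \emph{not} automatic that $\tau_i$ has an expansion with coefficients $\tau_{I\ol J}(w'')\in L^2$: the traces $\tau_i|_{w'=0}$, $\partial_{w'}\tau_i|_{w'=0},\ldots$ and the integral remainder must each be shown to exist and lie in $L^2$. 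The paper carries this out via an explicit Fourier-series trace argument; your ``writing a Taylor expansion'' skips precisely this step.
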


\begin{proof} 
In order to prove the point (1), our first remark is that 
	
	\begin{equation}\label{pave10}
		\int_X |\cG(\Phi)|^2e^{-\varphi}\leq C \int_X|\Phi- \cH (\Phi)|^2e^{-\varphi}
	\end{equation}
	by Poincar\'e inequality. The inequality \eqref{pave10} is valid for $(n, q)$-forms and it implies that 
	\begin{equation}\label{pave11}
		\int_X |\cG(\Phi)|^2e^{-\varphi}\leq C \int_X|\Phi|^2e^{-\varphi}
	\end{equation}
	because the $L^2$ norm of $\cH (\Phi)$ is smaller than the $L^2$ norm of $\Phi$. The smoothness of 
	$\cG(\alpha)$ in the complement of the support of the divisor $E+Y$ is standard. Then the Sobolev norm of order two of $\cG(\Phi)$ is bounded by the RHS of \eqref{pave11}.
	\smallskip
	
	\noindent By the definition of the current $\cG(\alpha)$ as in \eqref{pave17} it follows that we have
	\begin{equation}\label{pave60}
		\left|\langle \cG(\alpha), \ol \Phi\rangle\right|\leq \Vert\alpha\Vert_{L^{2-\delta}}\Vert \cG(\Phi)\Vert_{L^{\frac{2-\delta}{1-\delta}}}.
	\end{equation}
	Now the $L^{\frac{2-\delta}{1-\delta}}$-norm of $\cG(\Phi)$ is certainly controlled by the Sobolev norm above (at least when $\delta$ is small enough) so in conclusion we get
	\begin{equation}\label{pave61}
		\left|\langle \cG(\alpha), \ol \Phi\rangle\right|^2\leq C_\delta \Vert\Phi\Vert^2_{L^2}
	\end{equation}
	which shows that $\cG(\alpha)$ is in $L^2$.
	\smallskip
	
\noindent The first part of the point (2) (i.e. the equation satisfied by the form $v_i$) was already discussed in Proposition \ref{orbderiv}. The fact that the RHS is a form with log-poles is immediate and we skip the details.
\smallskip
	
	\noindent The points (3) and (4) are a bit more interesting and we will provide a complete treatment.
	
	\noindent To start with, the arguments provided so far allow us to assume that the form $v_i$ in \eqref{pave8} is in $L^2$.
	Together with the G\aa rding inequality established in \cite{Agm}, Theorem 7.1 we obtain  
	\begin{equation}\label{pave62}
		\Vert v_i\Vert_{W^{2, 2-\delta}}\leq C_p(\Vert v_i\Vert_{L^{2-\delta}}+ \Vert u_i\Vert_{L^{2-\delta}})
	\end{equation}
	for any $0<\delta<2$.  The point $(3)$ is thus proved.
	
	\bigskip
	
	\noindent To prove the point $(4)$,  let $\Xi$ be any local vector field with constant coefficients of the following shape
	\begin{equation}\label{pave63}
		\Xi= \sum_{i=r+1}^n a_i\frac{\partial}{\partial w_i}.
	\end{equation}
	The equation \eqref{pave8} implies that we have 
	\begin{equation}\label{pave64}
		\Delta''_{sm}(\Xi\cdot v_i)= \Xi\cdot u_i + \mathcal L(v_i)
	\end{equation}
	where $\mathcal L$ is a second order operator. In \eqref{pave64} we denote by $\Xi\cdot v_i$ the form whose coefficients are the derivative of those of $v_i$ in the direction of $\Xi$. 
	\smallskip
	
	\noindent 
	By Sobolev embedding theorem we infer that $\Xi(v)$ is in $L^2$ (provided that $\delta\ll 1$).
	It follows from \eqref{pave64} that $\Xi(v)$ has two derivatives in $L^p$. We can iterate this method, 
	and infer that $\Xi^m(v)\in L^2$ for any $m$.
	\medskip
	
	\noindent Finally, the usual argument --by using a Fourier expansion-- implies that $v_i$ admits a Taylor expansion as required in (4). We discuss here only a particular case, for notational reasons.
	Let $f\in L^2(\mathbb D)$ be a square integrable function defined on the unit disk in $\R^2$, and such that
	$\displaystyle \frac{\partial^m}{\partial x^m}(f)$ is in $L^2$ for any positive integer $m$.
	
	We consider the convolution $f_\ep$ of $f$ with the usual regularisation kernel and as usual we can assume that this functions are 1-periodic. We then write
	\begin{equation}\label{pave75}
		f_\ep(x, y)= \sum_{k\in \Z}f_{\ep, k}(y)e^{\sqrt{-1}kx}
	\end{equation}
	and by hypothesis, together with the $L^2$ convergence of convolutions and their derivatives
	we infer in particular that for any $m\in \Z_+$ we have
	\begin{equation}\label{pave76}
		\sum_{k\in \Z}k^{2m}\int_{|y|< 1}|f_{\ep, k}(y)|^2d\lambda(y)< C_m
	\end{equation}
	where $C_m$ is a constant independent of $\ep$.
	\smallskip
	
	\noindent A first consequence of \eqref{pave76} is the existence of $C>0$ such that 
	\begin{equation}\label{pave77}
		\int_{|y|< 1}|f_{\ep}(0, y)|^2d\lambda(y)< C
	\end{equation}
	which can be seen by observing that $$\displaystyle \left|\sum_{k=-N}^Nf_{\ep, k}(y)\right|^2\leq
	2\sum_{k=-N}^Nk^2|f_{\ep, k}(y)|^2\sum_{k= 1}^N\frac{1}{k^2}$$
	and integrating. 
	\smallskip
	
	\noindent We next write $\displaystyle f_\ep(x, y)= f_\ep(0,y)+ x\int_0^1\partial_1f_{\ep}(tx, y)dt$. Remark that we already know that the $L^2$ norm of the first term of the RHS is uniformly bounded,
	hence it is weakly convergent towards a function in $L^2$. We will analyse next the other function; notice that 
	\begin{equation}\label{pave78}
		\int_\mathbb D\left|\int_0^1\partial_1f_{\ep}(tx, y)dt\right|^2d\lambda\leq \int_0^1dt\int_\mathbb D\left|\partial_1f_{\ep}(tx, y)\right|^2d\lambda
	\end{equation}
	which equals 
	\begin{equation}\label{pave79}
		\int_\mathbb D\log\frac{1}{|x|}\left|\partial_1f_{\ep}(x, y)\right|^2d\lambda.
	\end{equation}
	By H\"older inequality the expression \eqref{pave79} is smaller than
	\begin{equation}\label{pave800}
		C_\delta\int_{|y|< 1}dy\left(\int_{|x|< 1}\left|\partial_1f_{\ep}(x, y)\right|^{2+ 2\delta}dx\right)^{\frac{1}{1+\delta}},
	\end{equation}
	and on the other hand, for each fixed $y$ we have 
	\begin{equation}\label{pave811}
		\frac{1}{C}\left(\int_{|x|< 1}\left|\partial_1f_{\ep}(x, y)\right|^{2+ 2\delta}dx\right)^{\frac{1}{1+\delta}}\leq 
		\int_{|x|< 1}\left|\partial_1f_{\ep}(x, y)\right|^{2}dx+ \int_{|x|< 1}\left|\partial_{11}f_{\ep}(x, y)\right|^{2}dx
	\end{equation}
	by Sobolev inequality. The integration with respect to $y$ of \eqref{pave811} shows that the LHS of 
	\eqref{pave78} is uniformly bounded. 
	
	\noindent The arguments for the general case are completely similar --modulo the fact that the number of indexes is greater-- and we will not provide any further detail. 
\end{proof}
\bigskip

\noindent Let $E=\sum E_i$ be divisor on $X$ such that $E+Y$ is snc. Consider next a current of the following type 
\begin{equation}\label{int111}
	T:= \frac{\lambda}{s_E}- \sum_i T_{\gamma_i} ,
\end{equation}
where $\lambda$ is a $L+E$-valued form, smooth in conic sense, $\gamma_i$ is a $L$-valued form on $E_i$, smooth in conic sense, and $T_{\gamma_i}$ is the current on $X$ defined by $\gamma$, cf. Subsection \ref{subexample}. 
We establish next some of the properties of $\cG(T)$. We will namely discuss the following result.
\begin{proposition}\label{reg3}
	Let $T$ be the current defined in \eqref{int111}. 
	The associated current $\cG(T)$ is smooth in the complement of the divisor $E+F$ and it is moreover $L^2$, i.e.
	\begin{equation}\label{int21}
		\int_X|\cG(T)|_{\omega_\cC}^2e^{-\varphi}dV_{\omega_\cC}< \infty.
	\end{equation}
\end{proposition}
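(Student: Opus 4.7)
The natural strategy is to exploit the duality definition \eqref{pave17}: for every $L$-valued $(p,q)$-form $\phi$ smooth in conic sense,
$$\int_X \cG(T)\wedge \overline{\star\phi} = \int_X T\wedge \overline{\star\cG(\phi)}.$$
To conclude $\cG(T) \in L^2$, it will suffice to establish a bound
$$\left|\int_X T\wedge \overline{\star\cG(\phi)}\right| \leq C\|\phi\|_{L^2}$$
for all such $\phi$; combined with Riesz representation this promotes $\cG(T)$ to a genuine $L^2$ form. The smoothness of $\cG(T)$ on $X \setminus (E+Y)$ will then follow from standard elliptic regularity for $\Delta''\cG(T) = T - \cH(T)$: on that open set $T$ is represented by the smooth form $\lambda/s_E$, the harmonic part $\cH(T)$ is smooth in conic sense (hence smooth there), and $\omega_\cC$ restricts to a smooth K\"ahler metric.

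The crucial step is the pairing estimate. Setting $\psi := \cG(\phi)$, Theorem \ref{CoHo1} ensures that $\psi$ is smooth in conic sense, and Proposition \ref{reg2} supplies an $L^2$-bound on $\psi$, a $W^{2,2-\delta}$-bound on its local lifts, and crucially the normal-direction Taylor expansion along the components of $E+Y$. By the definition of the currents $T_{\gamma_i}$ recalled in Subsection \ref{subexample},
$$\int_X T\wedge \overline{\star\psi} = \lim_{\ep\to 0}\int_{\{|s_E|\geq\ep\}} \frac{\lambda}{s_E}\wedge \overline{\star\psi} - \sum_i \int_{E_i}\gamma_i\wedge \overline{\star\psi}|_{E_i}.$$
Working in local orbifold charts $\pi_i\colon U_i\to V_i$ adapted to $E+Y$, the plan is to use the Taylor expansion of $\psi$ transverse to each component $E_j$ in order to isolate the singular contribution of the principal-value integral as $\ep \to 0$. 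The residue that appears, governed by the limit computation in Lemma \ref{taylorde}, is precisely a boundary integral over $E_j$ against the leading Taylor coefficient of $\overline{\star\psi}$, and this is exactly cancelled by the subtracted $T_{\gamma_j}$-contribution built into the definition of $T$.

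After this cancellation, the remaining bulk term is estimated by H\"older's inequality using $\lambda \in L^\infty$ together with the Sobolev control on $\psi$ from Proposition \ref{reg2}(3), yielding the desired bound by a multiple of $\|\phi\|_{L^2}$. The main obstacle will be the careful residue bookkeeping across multiple intersecting components of $E+Y$: one must simultaneously handle the interaction between the orbifold multiplicities of $\omega_\cC$ along $Y$ and the simple-pole behavior of $\lambda/s_E$ along $E$, as well as the possible overlaps $E_i\cap E_j$ where several Taylor expansions interfere. A finite partition of unity reduces this to finitely many coordinate patches, after which the residue analysis is a multi-index variant of Lemma \ref{taylorde} combined with the expansion supplied by Proposition \ref{reg2}(4), and the announced $L^2$-bound then follows.
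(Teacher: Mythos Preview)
Your plan rests on a cancellation that does not exist. In the definition \eqref{int111} the forms $\gamma_i$ on $E_i$ are \emph{arbitrary} (smooth in conic sense); there is no stated relation between them and $\lambda$, so there is no reason for the ``residue'' of the principal-value integral of $\lambda/s_E$ to match $\int_{E_i}\gamma_i\wedge\ol{\star\psi}$. Moreover, the principal value $\displaystyle\lim_{\ep\to 0}\int_{|s_E|\geq\ep}\frac{\lambda}{s_E}\wedge\ol{\star\psi}$ simply converges, without producing a boundary term on $E_j$: Lemma~\ref{taylorde} is precisely the statement that these annular integrals vanish in the limit, not that they leave a residue. So the mechanism you describe---Taylor-expand $\psi$ normally to $E_j$, peel off a boundary integral, and cancel it against $T_{\gamma_j}$---does not get off the ground.

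The paper's argument avoids any interaction between the two pieces of $T$ and bounds each separately against $\Vert\Phi\Vert_{L^2}$. For the term $\displaystyle\int_X\frac{\lambda}{s_E}\wedge\ol{\cG(\Phi)}e^{-\varphi_L}$ one observes that $\lambda/s_E\in L^{2-\delta}$ (simple pole), while the G\aa rding inequality gives $\cG(\Phi)\in W^{2,2}$ with norm controlled by $\Vert\Phi\Vert_{L^2}$; Sobolev embedding then puts $\cG(\Phi)\in L^{(2-\delta)/(1-\delta)}$ and H\"older finishes. For the term $\displaystyle\int_{E_i}\gamma_i\wedge\ol{\cG(\Phi)}e^{-\varphi_L}$ one extends $\gamma_i$ to an $L^2$ form $\Gamma_i$ on $X$, writes $[E_i]=\theta_i+dd^c\log|s_i|^2$, and after Stokes is left with bulk integrals involving $\frac{\partial s_i}{s_i}$ against $\cG(\Phi)$ or $D'\cG(\Phi)$; these are handled by the same $W^{2,2}$ control and Sobolev, now to get $\cG(\Phi),\,D'\cG(\Phi)\in L^{2+\delta}$. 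No Taylor expansion from Proposition~\ref{reg2}(4) is needed, and the whole argument is a page.
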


\begin{proof}
	We first remark that $T$ is more singular than e.g. a current induced by a form with log poles, so it is natural that the regularity properties of $\cG(T)$ are considerably weaker than 
	those established in Proposition \ref{reg2}.  
	
	\noindent By definition we have
	\begin{equation}\label{int3}
		\langle \cG(T), \Phi\rangle= \int_X\frac{\lambda}{s_E}\wedge \ol{\cG(\Phi)} e^{-\varphi_L}+ 
		\sum_i\int_{E_i}\gamma_i\wedge \ol{\cG(\Phi)} e^{-\varphi_L},
	\end{equation}
	and the absolute value of the first part of the sum is bounded by $\displaystyle \Vert\Phi\Vert_{L^2}$ up to a constant: this is a consequence of the proof of the first point of Proposition \ref{reg2}.
	
	\medskip
	
	\noindent We  treat now the second part of the RHS of \eqref{int3}. Let $\Gamma_i$ be a $L^2$ and \emph{smooth} extension of the $L$-valued form $\gamma_i$. 
	This is constructed e.g. by a partition of unit --we notice the $L^2$ condition is fulfilled precisely because $E+Y$ is a snc divisor.  
	For each $i$ we can write
	\begin{equation}\label{int41}
		\int_{E_i}\gamma_i\wedge \ol{\cG(\Phi)} e^{-\varphi_L}= \int_X\left(\theta_i+ dd^c\log|s_i|^2\right)\wedge\Gamma_i\wedge \ol{\cG(\Phi)} e^{-\varphi_L}.
	\end{equation}
	where $\theta_i$ is the curvature of the bundle associated to the divisor $E_i$ endowed with a non-singular metric. 
	
	\noindent The first term of the RHS of \eqref{int41} is bounded by the $L^2$ norm of $\Phi$, given that 
	$\theta_i\wedge \Gamma_i$ is $L^2$ with respect to $h_L$. 
	
	\noindent For the second term of the RHS of \eqref{int41}, we are using Stokes and we have to estimate
	\begin{equation}\label{int5}
		\int_X\frac{\partial s_i}{s_i}\wedge\dbar\Gamma_i\wedge \ol{\cG(\Phi)}e^{-\varphi_L}, \qquad
		\int_X\frac{\partial s_i}{s_i}\wedge\Gamma_i\wedge \ol{D'\cG(\Phi)}e^{-\varphi_L}.
	\end{equation} 
	To this end we simply use the G\aa rding inequality (after composing with the local ramification maps)
	and infer that 
	\begin{equation}\label{int6}
		\Vert \cG(\Phi)\Vert^2_{W^{2,2}}\leq C\left(\Vert\Phi\Vert^2_{L^2}+ \Vert\cG(\Phi)\Vert^2_{L^2}\right).
	\end{equation}
	By Sobolev inequality the $L^{2+\delta}$-norm of $\cG(\Phi)$ and $D'\cG(\Phi)$ are bounded by 
	the left hand side of \eqref{int6} provided that $\delta\ll1$. This ends the proof of our proposition. 
\end{proof}


$$$$

\section{Proof of Theorem \ref{ddbar} and Theorem \ref{fujino}}\label{007'}

\subsection{A general vanishing theorem}\label{optimum}

\noindent The plan for this section is to prove Theorem  \ref{ddbar} and Theorem \ref{fujino}.  
We prove the Theorem \ref{ddbar} in this subsection. As an application, we will prove Theorem \ref{fujino} in the next subsection.  We would like to prove:

\begin{thm}\label{ddbar01}{\rm [= Theorem \ref{ddbar}]}
 Let $X$ be a $n$-dimensional compact Kähler manifold and let $E$ be a snc divisor on $X$, $s_E$ be the canonical section of $E$.   Let $(L, h_L)$ be a holomorphic line bundle on $X$ such that
 $$i\Theta_{h_L} (L) = \sum q_i [Y_i] + \theta_L,$$ where $q_i \in ]0,1[ \cap \mathbb Q$, $E +\sum Y_i$ is snc, and the form $\theta_L$ is smooth, semi-positive.
 
 Let $\lambda$ be a $\dbar$-closed smooth $(n,q)$-form with value in $L+E$. If there exists $\beta_1$ and $\beta_2$, two $L$-valued $(n-1, q)$-form and $(n-1, q-1)$-form with log poles along $E+\sum Y_i$ respectively, such that
 \begin{equation}
 	\frac{\lambda}{s_E} = D' _{h_L}\beta_1 + \theta_L \wedge \beta_2 \qquad\text{ on } X\setminus (E+\sum Y_i) ,
 \end{equation}	
 then the form $\lambda$ is $\dbar$-exact, i.e, 
 the class $[\lambda]=0 \in H^q (X, K_X +L+E)$.
\end{thm}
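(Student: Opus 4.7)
The plan is to view $T := \lambda/s_E$ as an $L$-valued $(n,q)$-current on $X$ in the conic sense of Definition~\ref{currents} and to apply the De~Rham--Kodaira decomposition of Section~\ref{current}. Choose $m$ large enough so that conditions~(i)--(ii) at the start of Subsection~\ref{subsectconic} hold for the divisor $E + \sum Y_i$ and the metric $h_L$, and equip $X\setminus (E\cup \sum Y_i)$ with a K\"ahler metric $\omega_\cC$ with conic singularities of cone angle $2\pi/m$ along $E + \sum Y_i$. By example~(4) of Subsection~\ref{subexample} the smooth $(L+E)$-valued form $\lambda$ produces a well-defined $L$-valued conic current $T$, and the equality $\dbar\lambda = 0$ combined with $\dbar s_E = 0$ yields $\dbar T = 0$ on $X$ in the conic sense (via a standard cut-off argument).

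Apply the closed-current version \eqref{pave19} of the decomposition to obtain
\begin{equation*}
T \;=\; \cH(T) \;+\; \dbar\bigl(\dbar^\star \cG(T)\bigr).
\end{equation*}
The main point is to show $\cH(T) = 0$. Using the pointwise curvature identity $\dbar D'_{h_L} + D'_{h_L}\dbar = \theta_L\wedge(\cdot)$, valid on $X\setminus \sum Y_i$, one rewrites the hypothesis as
\begin{equation*}
T \;=\; D'_{h_L}(\beta_1 + \dbar\beta_2) \;+\; \dbar(D'_{h_L}\beta_2),
\end{equation*}
i.e.\ as a sum of a ``$D'_{h_L}$-coboundary'' and a ``$\dbar$-coboundary'' of forms with log poles. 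Pairing with a harmonic $L$-valued $(n,q)$-form $\xi$ and integrating by parts (after cut-offs, as in \eqref{add}) yields
\begin{equation*}
\langle T,\xi\rangle \;=\; \langle \beta_1 + \dbar\beta_2,\,(D'_{h_L})^\star\xi\rangle \;+\; \langle D'_{h_L}\beta_2,\,\dbar^\star\xi\rangle,
\end{equation*}
and both terms vanish because the harmonic form satisfies $\dbar^\star\xi = 0$ and, by Bochner (Proposition~\ref{bochner}) combined with $\theta_L \geq 0$, also $(D'_{h_L})^\star\xi = 0$. Therefore $\cH(T) = 0$.

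With $\cH(T) = 0$ the decomposition reduces to $T = \dbar U$ where $U := \dbar^\star \cG(T)$ is, by Proposition~\ref{reg3} and G\aa rding, an $L^2$ $L$-valued $(n,q-1)$-current on $X$. Multiplying by the holomorphic section $s_E$ produces $\lambda = \dbar(s_E\,U)$ as currents with values in $L + E$; since distribution Dolbeault cohomology coincides with smooth Dolbeault cohomology on the compact manifold $X$, we conclude $[\lambda] = 0$ in $H^q(X, K_X + L + E)$.

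The main obstacle is making the integration-by-parts step rigorous: both $\beta_1$ and $\beta_2$ have log poles along $E + \sum Y_i$, so the computation must be performed against the cut-offs $\mu_\ep$ of Lemma~\ref{cutoff}, with the error terms controlled in the limit $\ep\to 0$. This is feasible precisely because the harmonic form $\xi$ is smooth in the conic sense (Corollary~\ref{smoothn}), so the boundary contributions pick up factors of $|\partial\mu_\ep|$ which vanish in the limit, exactly as in the model estimates of Lemma~\ref{taylorde} and the identity \eqref{add}.
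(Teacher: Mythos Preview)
Your approach has a genuine gap at the step where you assert $\dbar T = 0$ on $X$ in the conic sense. The current $T = \lambda/s_E$ has a first-order pole along every component $E_i$ of $E$, and its $\dbar$ picks up residual contributions supported on $E$: locally near $E_i = \{z_i = 0\}$ one has $T = \frac{dz_i}{z_i}\wedge\alpha_i + (\text{smooth})$, and the usual residue computation gives $\dbar T = -\sum_i [E_i]\wedge \mathrm{Res}_{E_i}(T) \neq 0$. Putting conic singularities along $E$ does not help, since after pulling back by the branched cover $w\mapsto w^m$ one still has a simple pole $dw/w$. Consequently, from $\cH(T)=0$ you only obtain $T = \dbar\dbar^\star\cG(T) + \dbar^\star\dbar\cG(T)$, and the second summand has no reason to vanish. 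In other words, your argument establishes essentially Lemma~\ref{dddlem} (the pairing of $T$ with \emph{harmonic} forms vanishes), but this is strictly weaker than what is needed.

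The paper's proof confronts exactly this obstacle and takes a different route. Rather than decomposing $T$, it works dually: for a test form $\varphi\in C^\infty_{(0,n-q)}(X,-L-E)$ one writes $\varphi=\varphi_1+\varphi_2$ with $\varphi_1\in\ker\dbar$, and the heart of the argument is to show $\int_X s_E\,\wh\Theta\wedge\varphi_1 = \int_X \wh\Theta\wedge(s_E\varphi_1) = 0$. Here $s_E\varphi_1$ is $\dbar$-closed and \emph{vanishes on $E$}, but is not harmonic, so Lemma~\ref{dddlem} does not apply. The required vanishing is Lemma~\ref{keylemm}, proved by a descending induction over the strata $E_{I_k}$ of $E$: one applies the de Rham--Kodaira decomposition \emph{on each stratum}, uses that the residues $\mathrm{Res}_{E_{I_k}}(\wh\Theta)$ again lie in the image of $D'_{h_L}+\theta_L\wedge\cdot$, and exploits cancellations among the iterated residues. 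This stratified residue analysis is precisely what compensates for the failure of $\dbar T = 0$, and it is the step your proposal is missing.
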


Remark that
the case $E=0$ is quite simple: it follows from Theorem \ref{Hodge} and standard argument.
\medskip



\begin{proof}[Proof of Theorem \ref{ddbar01}]
  We consider a test form $\displaystyle \varphi \in C^\infty _{(0,n-q)} \big(X,  -L - E\big)$ with compact support in $X\setminus Y$.

  \noindent Then we have the orthogonal decomposition
\begin{equation}\label{van3}
  \varphi =\varphi_1 +\varphi_2, \qquad \varphi_1\in \ker \dbar,\quad
  \varphi_2\in (\ker \dbar)^{\perp} 
\end{equation}
where the reference metric on $X$ is $\omega_{\mathcal C}$. The bundle $-L$ is endowed with the dual metric $h_L ^*$ and $-E$ with a smooth, arbitrary metric.
\smallskip

\noindent Our proof relies on the following two statements: 
\begin{equation}\label{check1}
	\int_X s_E(D' _{h_L}\beta_1 + \theta_L\wedge \beta_2)\wedge \varphi_1 =0
\end{equation}	
and moreover there exists a constant $C> 0$ such that 
\begin{equation}\label{check2}
 \int_X |\varphi_2|^2 dV_{\cC} \leq C \int_X |\dbar\varphi_2|^2 dV_{\cC}= C\int_X |\dbar\varphi|^2dV_{\cC}.
\end{equation}	
To prove \eqref{check1}, note first that, thanks to Corollary \ref{Kompsup}, the form $\varphi_1$ is smooth in conic sense. Then $s_E\cdot \varphi_1$ has the same property. 
Moreover, $\dbar (s_E\varphi_1 )=0$ and $s_E\varphi_1 $ vanishes along $E$. 
Then we obtain \eqref{check1} as consequence of Lemma \ref{keylemm} below. For \eqref{check2}, we simply use Theorem \ref{CoHo1}
which implies that the image of $\dbar$ operator is closed (regardless to the fact that the curvature of $L^\star$ is negative). 
\medskip

\noindent We consider next the linear application
$$\dbar\varphi \rightarrow \int_X s_E(D' _{h_L}\beta_1 + \theta_L \wedge \beta_2)\wedge \varphi.$$
It follows from  \eqref{check1} and \eqref{check2} that it
is bounded. By standard functional analysis there exists a form $f\in L^2 _{(n,q-1)}(X, L +E )$ such that 
$$\int_X s_E(D' _{h_L}\beta_1 + \theta_L \wedge \beta_2)\wedge \varphi  = \int_X \dbar\varphi  \wedge f .$$
Therefore we infer that the equality
\begin{equation}\label{van5}
 \lambda= s_E( D' _{h_L}\beta_1 + \theta_L\wedge \beta_2)= \dbar f \qquad\text{ on } X\setminus Y.
\end{equation}
Since the LHS in
\eqref{van5} is smooth, it is $L^2$. Moreover, $f$ is also $L^2$. By using the remark after Theorem \ref{JP}, we know that \eqref{van5} holds on the total space $X$. 
Theorem \ref{ddbar01} is thus proved modulo the lemma \ref{keylemm} below. 
\end{proof}
\medskip

\noindent The following key lemma \ref{keylemm} is a generalization of \cite[Lemma .2.3]{LRW} in our setting.
Prior to stating it, we introduce a further notation. Let $\gamma$ be an $L$-valued form of $(p, q)$-type which moreover has log-poles along $E$ and it is $\dbar$-closed when restricted to the complement of the support of $E$. Locally on a coordinate subset $\Omega\subset X$ such that $E_1\cap \Omega= (z_1= 0)$ we can write
\begin{equation}\label{van6}
\gamma|_{\Omega}= \frac{dz_1}{z_1}\wedge \gamma_1+ \gamma_2
\end{equation}
where $\gamma_1$ and $\gamma_2$ are smooth along $E_1\cap \Omega$. We define
\begin{equation}\label{van7}
\Res_{E_1}(\gamma)|_{\Omega}:= \gamma_1|_{\Omega}
\end{equation}
and it is not difficult to see that the following hold:
\smallskip

\noindent $\bullet$ The forms $\Res_{E_1}(\gamma)|_{\Omega}$, even if given locally as in
\eqref{van7}, defines a global $(p-1, q)$-form on $E_1$ with values in $\displaystyle L|_{E_1}$
and log-poles on $\displaystyle (E-E_1)|_{E_1}$. We will denote it with $\Res_{E_1}(\gamma)$
\smallskip

\noindent $\bullet$ We have
 \begin{equation}\label{res}
 	\Res_{E_1}(\gamma)= -\chi_{E_1}\dbar \gamma
 	\end{equation} i.e. residue along $E_1$ can be obtained by multiplying the current $\dbar \gamma$ with the characteristic function of $E_1$.

\smallskip

\noindent $\bullet$ The equality $\dbar \Res_{E_1}(\gamma)= 0$ holds in the
complement of the support of $\displaystyle (E-E_1)|_{E_1}$.
\medskip

\noindent All the properties above are completely standard. In the specific context
of Theorem \ref{ddbar}, we consider the current 
 \begin{equation}\label{vanres1}
 \wh\Theta:= D' _{h_L}\beta_1 + \theta_L\wedge \beta_2   
 \end{equation}
and its residues 
along the components of $E$. As a preparation for the proof of this result, we discuss next a few properties of $\wh\Theta$ that we will next use.
\smallskip

\noindent $\bullet$ The first fact to remark is that we have the equality
\begin{equation}\label{vanres44}
\frac{\lambda}{s_E}= D'_{h_L}\beta_1 + \theta_L\wedge \beta_2   
\end{equation}
in the sense of currents on the total space $X$. This is due to the fact that the current
$D' _{h_L}\beta_1$ has no divisorial part supported in $E+Y$. Note that this is absolutely not the case in general if we replace $D'_{h_L}$ with $\dbar$.
\smallskip

\noindent $\bullet$ Another important observation is that the residue of $\wh\Theta$ along any component of $E$ has the same ``shape''. i.e. it belongs to the
image of $D'_{h_L} + \theta_L \wedge $. This is a simple calculation that we skip.

\smallskip

\noindent $\bullet$ The residue of $\wh\Theta$ on the components of $Y$ is equal to zero.
This claim may look strange/suspicious, since the
forms $\beta_i$ could have log-poles on $Y$,
so the residues of each form $D' _{h_L}\beta_1$ and $\theta_L\wedge \beta_2$
could be non-zero on some of the components of $Y$. However, the equality
\eqref{vanres44} clarifies the matter.
\medskip

\noindent We have the next auxiliary result which will be very useful in the
proof of Lemma \ref{keylemm} below.
\begin{lemme}\label{dddlem}
  Let $\gamma$ be a $\Delta^{''}_{\omega_{\mathcal C}}$-harmonic $(0, n-q)$-form with values in the dual bundle $L^\star$ of $L$. Then the equality
\begin{equation}\label{vanres4}
\int_X(D' _{h_L}\beta_1 + \theta_L\wedge \beta_2)\wedge \gamma= 0
\end{equation}
holds (here as everywhere, the integral is computed in the complement of $E+Y$as a matter of fact, the form in \eqref{vanres4} is absolutely integrable). 
\end{lemme}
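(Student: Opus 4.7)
The plan is to split the integral into its two natural pieces and handle each by combining the Bochner-type vanishings for the harmonic form $\gamma$ with an integration by parts against the log-pole form $\beta_1$. First, I would invoke the corollary following Proposition~\ref{bochner} applied to the bundle $(L^\star, h_L^\star)$, whose smooth curvature part is $-\theta_L\leq 0$: the $\Delta''$-harmonic $(0,n-q)$-form $\gamma$ satisfies, pointwise on $X\setminus Y$, both $D'_{h_L^\star}\gamma=0$ and $\theta_L\wedge \gamma=0$. The second identity immediately gives $\theta_L\wedge \beta_2\wedge \gamma=0$ pointwise, so that $\int_X\theta_L\wedge \beta_2\wedge\gamma=0$, and the lemma reduces to the claim $\int_X D'_{h_L}\beta_1\wedge\gamma=0$.

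For this remaining integral the plan is integration by parts. The Leibniz rule for the Chern connection on the natural pairing $L\otimes L^\star\to\mathbb C$, together with $D'_{h_L^\star}\gamma=0$, produces the identity $\partial(\beta_1\wedge\gamma)=D'_{h_L}\beta_1\wedge\gamma$ on $X\setminus(E+Y)$. To promote this to a global statement I would introduce a family of smooth cutoffs $\mu_\ep$, vanishing on a shrinking tube around $E+Y$ and equal to $1$ outside it, taken as a product of factors each radial in the local defining function of one component of $E+Y$. Then $\mu_\ep\beta_1$ is smooth in the conic sense with support away from $E+Y$, so Corollary~\ref{int} gives
\begin{equation*}
\int_X D'_{h_L}(\mu_\ep\beta_1)\wedge\gamma = (-1)^{n+q}\int_X\mu_\ep\beta_1\wedge D'_{h_L^\star}\gamma = 0.
\end{equation*}
Expanding via the Leibniz rule yields $\int_X\mu_\ep D'_{h_L}\beta_1\wedge\gamma=-\int_X\partial\mu_\ep\wedge\beta_1\wedge\gamma$, and the left-hand side tends to $\int_X D'_{h_L}\beta_1\wedge\gamma$ by dominated convergence. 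It is therefore enough to show that the boundary integral on the right tends to zero as $\ep\to 0$.

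The vanishing of the boundary term is the heart of the proof. Two features make it work. Since each factor of $\mu_\ep$ is radial in $|s_i|$, the form $\partial\mu_\ep$ is pointwise proportional to $ds_i$, so its wedge product with the $\frac{ds_i}{s_i}$-residue part of $\beta_1$ vanishes identically; only the ``smooth-in-$s_i$'' part of $\beta_1$ can contribute, in harmony with the third bullet point preceding the lemma (the residue of $\wh\Theta$ along any component of $Y$ vanishes). On the resulting tube, volume and derivative estimates $\mathrm{vol}(\{|s_i|\sim\ep\})=O(\ep^2)$, $|\partial\mu_\ep|=O(1/\ep)$, combined with the local smoothness in conic sense of $\gamma$ (Proposition~\ref{orbderiv}) and the integer hypotheses $mq_i\in\mathbb Z$, $\lfloor q_i-1/m\rfloor=\lfloor q_i\rfloor$, which pin down the admissible size of the local coefficients of $\gamma$ in a trivialisation of $L^\star$ near $Y$, yield an upper bound of the form $O(\ep^\alpha)$ for some $\alpha>0$ on the boundary integral; intersection loci of several components of $E+Y$ are treated by an iterated cutoff, and the resulting delicate cross terms near the intersection strata are controlled by the Taylor expansion technique of Proposition~\ref{reg2} together with the principal-value estimates of Lemma~\ref{taylorde}. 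The hard part of the argument is precisely this boundary estimate; the whole conic-current formalism of Section~\ref{current} is engineered so that it succeeds.
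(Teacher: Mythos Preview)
Your proposal follows essentially the same route as the paper: use the Bochner-type vanishings $D'_{h_L^\star}\gamma=0$ and $\theta_L\wedge\gamma=0$ for the harmonic $(0,n-q)$-form $\gamma$, then reduce via a cutoff to showing that the boundary term $\int_X\partial\mu_\ep\wedge\beta_1\wedge\gamma$ tends to zero, which is exactly equation \eqref{vanres6} in the paper.

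One point of caution on the boundary estimate. Your crude volume heuristic ($\mathrm{vol}(\{|s_i|\sim\ep\})=O(\ep^2)$, $|\partial\mu_\ep|=O(1/\ep)$) is not by itself sufficient: near a component of $Y$ the $L^\star$-valued form $\gamma$, though smooth in conic sense, behaves like $|s_i|^{-q_i}$ in a holomorphic trivialisation, and $\beta_1$ retains its log poles along the other components; a naive count gives only $O(1)$. What actually makes \eqref{vanres6} work is the rotational symmetry of the cutoff combined with a Taylor expansion of the integrand, so that the divergent terms integrate to zero exactly---this is the content of the Appendix (Proposition~\ref{taylor1} and the discussion around \eqref{snc570}), not Proposition~\ref{reg2}, which concerns Green-kernel regularity and is not the relevant tool here. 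Your reference to Lemma~\ref{taylorde} is the right pointer.
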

\begin{proof}
Since $\gamma$ is harmonic, it is in particular smooth in conic sense and moreover, as explained above, the equality $\displaystyle D'_{h_L}\beta_1 + \theta_L\wedge \beta_2=\frac{\lambda}{s_E}$ holds in the sense of currents. It follows that the form
under the integral \eqref{vanres4} is absolutely integrable: this is seen as follows. Let 
$\pi:U\to V$ be a ramified cover adapted to our setting and coordinates on $V$ such that 
$$Y\cap V=(z_1\dots z_r=0), \qquad E\cap V=(z_{r+1}\dots z_{r+s}=0).$$
We have 
$$\frac{1}{w^qm}\pi^\star\Big(\frac{\lambda}{s_E}\Big|_V\Big)\wedge \tau= \frac{\O(1)}{\prod w_j^{q_jm+1-m}\prod w_i}d\lambda(w)$$
for any smooth form $\tau$ of type $(0, n-1)$, so our claim follows.
\smallskip

\noindent Thus, it would be enough to show that we have
\begin{equation}\label{vanres5}
\lim_{\ep\to 0}\int_X\mu_\ep(D' _{h_L}\beta_1 + \theta_L\wedge \beta_2)\wedge \gamma= 0
\end{equation}
where $(\mu_\ep)$ is the usual family of truncation functions. Now $\gamma$ is harmonic, so we have
\begin{equation}
D'\gamma= 0, \qquad \theta_L\wedge \gamma=0.
\end{equation}
Therefore, it all boils down to the next equality 
\begin{equation}\label{vanres6} 
\lim_{\ep\to 0}\int_X\partial \mu_\ep\wedge\beta_1\wedge \gamma= 0
\end{equation}
which in turn is clear by a local calculation (using the fact that $\gamma$ is smooth in conic sense).
\medskip

For example, say that $n=2$, so that $\beta_1$ is a $(1,1)$-form and $Y= (z_1=0), E= (z_2=0)$. Then the expression \eqref{vanres6} is locally equal to
\begin{equation}\label{vanres46} 
\int_V\frac{\rho_\ep'(z)}{\log(|z_1|^2e^{-\phi})}\frac{dz_1}{z_1}\wedge \frac{dz_2}{z_2}\wedge(ad\ol z_1+ bd\ol z_2)\wedge\gamma|_V
\end{equation}
and by the local transformation $(w_1, w_2)\to (w_1^m, w_2)$ this expression above becomes
\begin{equation}\label{vanres47} 
\int_U\frac{\rho_\ep'(w)}{\log(|w_1|^2e^{-\phi})}\frac{dw_1}{w_1}\wedge \frac{dw_2}{w_2}\wedge(a_1d\ol w_1+ b_1d\ol z_2)\wedge\frac{\tau}{w_1^{mq}}
\end{equation}
where $a_1, b_1$ are smooth functions with compact support on $U$ and $\tau$ is a smooth $(0,1)$ form. This is a 
consequence of the fact that harmonic forms are smooth in conic sense. Arguments as in Lemma \ref{taylorde} and the examples above do apply, so we get
\begin{equation}\label{vanres48} 
\lim_{\ep\to 0}\int_U\rho_\ep'(w)\frac{f(w)}{w_1^{1+mq}w_2}\frac{d\lambda}{\log(|w_1|^2e^{-\phi})}= 0.
\end{equation}
Actually we refer to the Appendix for a complete proof of \eqref{vanres48}. 
\end{proof}  

\medskip

\noindent Before stating our main lemma, we introduce some notations.
We suppose that the snc divisor $E$ is written as $E= \sum_{1\leq i\leq r} E_i$.
Let $I_d \subset \{1, \cdots , r\}$ with $|I_d|=d$. 
Let $\alpha$ be a $L^*$-valued smooth form (in the conic sense with respect to $h_L^\star, \omega_{\mathcal{C}}$) defined on some space which contains $E_{I_d}$. 
We define 
$$L_{I_d} (\alpha) :=  \dbar^{\star_{I_{d}}} \cG_{I_{d}}  (\alpha |_{E_{I_d}} ) ,$$ 
where $\cG_{I_{d}} $ is the Green operator on $E_{I_d}$ and $\dbar^{\star_{I_{d}}} $ is the $\dbar^\star$-operator on $E_{I_d}$ with respect to $h_L^\star, \omega_{\mathcal{C}}$.
If $d=0$, $L_{I_{0}} \varphi  := \dbar^\star \cG (\varphi)$ on $X$. 
For every sequence $I_k \subset I_{k-1} \subset \cdots \subset I_0$ fixed, the residue $\Res_{E_{I_k}} (\wh \Theta) $ is defined by the following way: 
\begin{equation}\label{reshigher}
	\Res_{E_{I_k}} (\wh \Theta)  := \Res_{E_{I_k}} \Big(\Res_{E_{I_{k-1}}} \big( \cdots \Res_{E_{I_1}} (\wh \Theta) \big)\Big) .
\end{equation}
Now we can state the key lemma:

\begin{lemme}\label{keylemm}
In the setting of Theorem \ref{ddbar01}, set $\wh\Theta := D' _{h_L}\beta_1 + \theta_L\wedge \beta_2$.   Let $\varphi$ be a $\dbar$-closed $(0,n-q)$-form on $X$ with values in the dual of $L$, which moreover is smooth in conic sense. We assume that $\varphi$ vanishes along $E$. 
Then for every $k\in \mathbb N^*$,  we have 
\begin{equation}\label{maindd}
	 \sum_{I_k \subset I_{k-1} \subset \cdots \subset I_0} \int_{E_{I_k}} \Res_{E_{I_k}} (\wh \Theta)  \wedge  L_{I_{k-1}} \circ L_{I_{k-2}} \cdots \circ L_{I_{0}} \varphi =0 ,
 \end{equation}
and
\begin{equation}\label{addnew}
	\int_{X} \wh \Theta  \wedge  \varphi =0 .
	\end{equation}
\end{lemme}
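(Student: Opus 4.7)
The plan is to establish \eqref{maindd} by downward induction on $k$, and to derive \eqref{addnew} from the $k=1$ case. Two structural facts organize the argument: \textbf{(i)} each iterated residue $\Res_{E_{I_j}}(\wh\Theta)$ retains on the stratum $E_{I_j}$ the shape ``$D'_{h_L}\beta_1^{(j)}+\theta_L\wedge\beta_2^{(j)}$'' already pointed out in the second bullet after \eqref{vanres1}; and \textbf{(ii)} on $X$ the equality $\wh\Theta=\lambda/s_E$ holds as currents with $\dbar\lambda=0$, so integration by parts against $\wh\Theta$ of a suitable test form extracts the residues on the components of $E$ and nothing else.

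For the base of the induction, once $k$ exceeds the number of components of $E$ meeting at a point of $X$ the stratum $E_{I_k}$ is empty and \eqref{maindd} is trivial. For the inductive step from level $k+1$ down to level $k$, fix a chain (I interpret the paper's convention as $I_0\subset I_1\subset\cdots\subset I_k$ with $|I_j|=j$, so that the $E_{I_j}$'s are nested with decreasing dimension) and set $\phi_k:=(L_{I_{k-1}}\circ\cdots\circ L_{I_0})\varphi|_{E_{I_k}}$. Apply Theorem \ref{CoHo1} on $E_{I_k}$ to write $\phi_k=\cH_{I_k}(\phi_k)+\Delta''\cG_{I_k}(\phi_k)$; the harmonic piece is killed by \textbf{(i)} and the $E_{I_k}$-analog of Lemma \ref{dddlem}. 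For the remaining term, expand $\Delta''\cG=\dbar\dbar^{\star_{I_k}}\cG+\dbar^{\star_{I_k}}\dbar\cG$; each of the two summands, integrated by parts against $\Res_{E_{I_k}}(\wh\Theta)$, produces (via \textbf{(ii)} applied on $E_{I_k}$ to the sections $s_{E_{i_{k+1}}}|_{E_{I_k}}$ for $i_{k+1}\notin I_k$) residue contributions on the deeper strata $E_{I_{k+1}}$, and after summation over chains these assemble into the level-$(k+1)$ expression of \eqref{maindd} up to sign, which is zero by induction.

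For \eqref{addnew}, I would replay the same argument starting on $X$ itself: Theorem \ref{CoHo1} gives $\varphi=\cH(\varphi)+\dbar L_{I_0}(\varphi)$ (the $\dbar^\star\dbar\cG$-piece drops because $\dbar\varphi=0$ and $\cG$ commutes with $\dbar$); Lemma \ref{dddlem} disposes of the harmonic part; and Stokes with the cutoff $\mu_\ep$ from Lemma \ref{cutoff}, handling the $\dbar\mu_\ep$-terms as in \eqref{vanres46}--\eqref{vanres48} via Lemma \ref{taylorde} and the hypothesis $\varphi|_E=0$, turns the remaining integral into the $k=1$ instance of \eqref{maindd}, which has just been shown to vanish.

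The main obstacle will be the residue-producing integration-by-parts on the singular stratum $E_{I_k}$. One has to simultaneously manage the conic singularities of $\omega_\cC|_{E_{I_k}}$ via the local ramified covers of Proposition \ref{orbderiv}, the residual logarithmic poles of $\beta_1,\beta_2$ along $(E-E_{I_k})|_{E_{I_k}}+\sum Y_i|_{E_{I_k}}$, and verify that on $E_{I_k}$ the residue admits a principal-value representation of the form $\lambda^{(k)}/s_{E-E_{I_k}}|_{E_{I_k}}$ with $\dbar\lambda^{(k)}=0$, so that \textbf{(ii)} propagates inductively. The limiting arguments are of the same nature as Lemma \ref{taylorde} and the examples of Subsection \ref{subexample}, and are in effect what the Appendix mentioned in the paper is devoted to.
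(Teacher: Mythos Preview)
Your overall architecture---downward induction on $k$, de Rham--Kodaira decomposition, Lemma~\ref{dddlem} to kill harmonic contributions, and residue extraction via integration by parts---matches the paper's. The derivation of \eqref{addnew} from the $k=1$ case of \eqref{maindd} is also essentially correct.

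There is, however, a genuine gap in your inductive step. You decompose the test form $\phi_k$ on $E_{I_k}$ and assert that \emph{both} pieces $\dbar\dbar^{\star_{I_k}}\cG_{I_k}(\phi_k)$ and $\dbar^{\star_{I_k}}\dbar\cG_{I_k}(\phi_k)$ reduce, after integration by parts against $\Res_{E_{I_k}}(\wh\Theta)$, to the level-$(k{+}1)$ sum. This is only true for the first piece: moving $\dbar$ onto the residue current yields $\dbar\Res_{E_{I_k}}(\wh\Theta)$, which by \eqref{res} is supported on the deeper strata, and one recovers the $(k{+}1)$-sum. For the second piece $\dbar^{\star_{I_k}}\dbar\cG_{I_k}(\phi_k)$ there is no such mechanism---you cannot move $\dbar^\star$ across the wedge pairing so as to produce $\dbar$ on the current side. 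This term does \emph{not} descend directly to level $k{+}1$; it is the hard half of the argument, and your proposal glosses over it.

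The paper's proof is dual to yours at this juncture: it applies the de Rham--Kodaira decomposition to the \emph{current} $\Res_{E_{I_k}}(\wh\Theta)$ rather than to $\phi_k$. The piece $A=\dbar^{\star_{I_k}}\dbar\cG_{I_k}(\Res)$ then reduces to level $k{+}1$ by commuting $\cG$ with $\dbar$ and using \eqref{res}. The remaining piece $B=\dbar\dbar^{\star_{I_k}}\cG_{I_k}(\Res)$---which corresponds by duality to your problematic term---requires a separate iterative argument on the test-form side. One integrates by parts to obtain $\int \dbar^{\star_{I_k}}\cG_{I_k}(\Res)\wedge\dbar\big(L_{I_{k-1}}\circ\cdots\circ L_{I_0}\varphi\big)$ and then applies the Hodge decomposition on $E_{I_{k-1}}$ to $L_{I_{k-2}}\circ\cdots\circ L_{I_0}\varphi$, producing three contributions. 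The harmonic contribution vanishes because a $\Delta''$-harmonic $L^\star$-valued form $\gamma$ on $E_{I_{k-1}}$ satisfies $D'\gamma=0$ and $\theta_L\wedge\gamma=0$, hence \emph{remains harmonic when restricted to $E_{I_k}$}. The middle contribution vanishes by an alternating-sum cancellation over the two choices of $I_{k-1}$ sandwiched between fixed $I_k$ and $I_{k-2}$, coming from the antisymmetry of the iterated residue \eqref{reshigher}. The third contribution peels off one layer $L_{I_{k-1}}$ and the process iterates. At the bottom one reaches $\dbar L_{I_0}(\varphi)=\cH(\varphi)+\varphi$ (since $\dbar\varphi=0$), and only \emph{then} is the hypothesis $\varphi|_E=0$ invoked, to conclude $L_{I_1}(\varphi)=0$. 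None of these three ingredients---restriction-stability of harmonic forms, the alternating-sum cancellation, and the terminal use of $\varphi|_E=0$ inside the inductive step for \eqref{maindd}---appear in your proposal.
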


\begin{proof}
We prove it by induction on $k$ from large to small. For $k=n+1$, it is automatically true since $E_{I_{n+1}}$ is empty. We suppose that the lemma holds for $k+1$. Now we prove the lemma for $k$. 

First of all, we have
\begin{equation}\label{van311}
\int_{E_{I_k}}\Res_{E_{I_k}} (\wh \Theta) \wedge \gamma =0
\end{equation}
for any harmonic $\gamma \in C^{\infty} _{(0, n-k-1)} (E_{I_k}, L^\star)$. Indeed, we know that the residue $\Res_{E_{I_k}} (\wh \Theta)$ obtained by iterating the construction we have recalled before the lemma \ref{dddlem} can be written as
$$\Res_{E_{I_k}} (\wh \Theta)= D'(\beta_k)+ \theta_L\wedge \alpha_k$$
for some forms $\beta_k, \alpha_k$ with log poles. Then Lemma \ref{dddlem} implies \eqref{van311}.
\medskip

\noindent Then the deRham-Kodaira decomposition implies that the equality
$$\Res_{E_{I_k}} (\wh \Theta)  = \dbar \dbar^{\star_{I_k}} \cG_{I_k} (\Res_{E_{I_k}} (\wh \Theta))+ \dbar^{\star_{I_k}}  \dbar \cG_{I_k} (\Res_{E_{I_k}} (\wh \Theta))$$
holds. As a consequence, we obtain
$$\sum_{I_k \subset I_{k-1} \subset \cdots \subset I_0} \int_{E_{I_k}} \Res_{E_{I_k}} (\wh \Theta)  \wedge  L_{I_{k-1}} \circ L_{I_{k-2}} \cdots \circ L_{I_{0}} \varphi = A +B ,$$
where 
$$A : =\sum_{I_k \subset I_{k-1} \subset \cdots \subset I_0}  \int_{E_{I_k}} \dbar^{\star_{I_k}} \dbar  \cG_{I_k} (\Res_{E_{I_k}} (\wh \Theta)) \wedge L_{I_{k-1}} \circ \cdots \circ L_{I_{0}} \varphi$$
and
$$B:=\sum_{I_k \subset I_{k-1} \subset \cdots \subset I_0}   \int_{E_{I_k}} \dbar \dbar^{\star_{I_k}} \cG_{I_k} (\Res_{E_{I_k}} (\wh \Theta)) \wedge L_{I_{k-1}} \circ  \cdots \circ L_{I_{0}} \varphi .$$
We would like to prove that $A=B=0$.
\medskip

\noindent We first estimate $A$. Since $\cG$ commutes with $\dbar$, together with the relation \eqref{res}, we have
$$A= \sum_{I_k \subset I_{k-1} \subset \cdots \subset I_0}  \int_{E_{I_k}} \dbar^{\star_{I_k}} \cG_{I_k} \dbar (\Res_{E_{I_k} } (\wh \Theta)) \wedge L_{I_{k-1}} \circ  \cdots \circ L_{I_{0}} \varphi$$
$$= - \sum_{I_{k+1} \subset I_{k} \subset \cdots \subset I_0}  \int_{E_{I_{k+1}}}  \Res_{E_{I_{k+1}}} (\wh \Theta) \wedge L_{I_k} \circ L_{I_{k-1}} \circ  \cdots \circ L_{I_{0}} \varphi =0.$$
where the last equality is a consequence of the induction hypothesis.

\medskip

\noindent Next, we estimate the quantity $B$. We have
$$B= \sum_{I_k \subset I_{k-1} \subset \cdots \subset I_0} \int_{E_{I_k}} \dbar \dbar^{\star_{I_k}} \cG_{I_k} (\Res_{E_{I_k}} (\wh \Theta)) \wedge L_{I_{k-1}} \circ \cdots \circ L_{I_{0}} \varphi  $$
$$= \sum_{I_k \subset I_{k-1} \subset \cdots \subset I_0} \int_{E_{I_k}} \dbar^{\star_{I_k}} \cG_{I_k} (\Res_{E_{I_k}} (\wh \Theta)) \wedge \dbar (L_{I_{k-1}} \circ \cdots \circ L_{I_{0}} \varphi)   .$$
By applying the deRham-Kodaira decomposition \eqref{pave18} to $(L_{I_{k-2}} \circ \cdots \circ L_{I_{0}} \varphi ) |_{E_{I_{k-1}}}$, we obtain
$$ \dbar (L_{I_{k-1}} \circ \cdots \circ L_{I_{0}} \varphi)  = \gamma +  (L_{I_{k-2}} \circ \cdots \circ L_{I_{0}} \varphi) |_{E_{I_{k-1}}} +  L_{I_{k-1}} \circ \dbar (L_{I_{k-2}} \cdots \circ L_{I_{0}} \varphi)   , $$
where $\gamma$ is a $\Delta''_{E_{I_{k-1}}}$-harmonic $(0,n-k+1)$-form on $E_{I_{k-1}}$ with values in the restriction of the dual of $L$.
\medskip

\noindent We next estimate the integral involving the first two terms.
\smallskip

\noindent $\bullet$ {\sl The term involving} $\gamma$.  Since $\gamma$ is a harmonic $(0,n-k+1)$-form on $E_{I_{k-1}}$
with values in $L^\star$, it follows that $D'\gamma=0$ and $\displaystyle \theta_L|_{E_{I_{k}}}\wedge \gamma=0$ on $E_{I_{k-1}}$. These 
crucial equalities obviously are still verified by restriction one level deeper, to $E_{I_{k}}$. We infer that the restriction
$$\gamma|_{E_{I_{k}}}$$
is still harmonic and 
thus 
$$\int_{E_{I_k}} \dbar^{\star_{I_k}} \cG_{I_k} (\Res_{E_{I_k}} (\wh \Theta)) \wedge \gamma =0.$$
\smallskip

\noindent $\bullet$ {\sl The term involving} $L_{I_{k-2}} \circ \cdots \circ L_{I_{0}} \varphi $. The sum 
$$ \sum_{I_k \subset I_{k-1} \subset \cdots \subset I_0} \int_{E_{I_k}} \dbar^{\star_{I_k}} \cG_{I_k} (\Res_{E_{I_k}} (\wh \Theta)) \wedge  (L_{I_{k-2}} \circ \cdots \circ L_{I_{0}} \varphi)  $$
equals zero as soon as $k-1\geq 1$:  for every $I_{k}\subset I_{k-2}$ fixed, $I_{k-2} \setminus I_k$ contains only two elements. By construction of $\Res_{I_k}$ \eqref{reshigher}, the alternate sum for $I_{k-1}$ satisfying $I_k \subset I_{k-1} \subset I_{k-2}$ equals to $0$.
\medskip

\noindent It follows that if $k-1 \geq 1$, we obtain
$$B=
\sum_{I_k \subset I_{k-1} \subset \cdots \subset I_0} \int_{E_{I_k}} \dbar^{\star_{I_k}} \cG_{I_k} (\Res_{E_{I_k}} (\wh \Theta)) \wedge L_{I_{k-1}} \circ \dbar (L_{I_{k-2}} \circ \cdots \circ L_{I_{0}} \varphi).$$
\medskip 

\noindent Now we can repeat the same argument for $\dbar (L_{I_{k-2}} \circ \cdots \circ L_{I_{0}} \varphi)$ as long as $k\geq 3$.
The only difference is to treat the integral involving the harmonic forms $\gamma$ on $E_{I_{k-2}}$: we use the fact that $\cG_{I_{k-1}} (\gamma|_{E_{I_{k-2}}}) =0$. We finally get 
$$
B = \sum_{I_k \subset I_{k-1} \subset \cdots \subset I_0} \int_{E_{I_k}} \dbar^{\star_{I_k}} \cG_{I_k} (\Res_{E_{I_k}} (\wh \Theta)) \wedge L_{I_{k-1}} \circ \dbar (L_{I_{k-2}} \circ \cdots \circ L_{I_{0}} \varphi)$$
$$\cdots = 
\sum_{I_k \subset I_{k-1} \subset \cdots \subset I_0} \int_{E_{I_k}} \dbar^{\star_{I_k}} \cG_{I_k} (\Res_{E_{I_k}} (\wh \Theta)) \wedge L_{I_{k-1}} \circ L_{I_{k-2}} \circ \cdots \circ L_{I_{1}}\circ \dbar (L_{I_{0}}  (\varphi)) .$$
\smallskip

\noindent Concerning the  term $\dbar (L_{I_{0}}  (\varphi)) = \dbar \dbar^\star \cG (\varphi)$, we consider the Hodge decomposition
$$\dbar \dbar^\star \cG (\varphi) = \gamma + \varphi - \dbar^\star \cG (\dbar \varphi) = \gamma + \varphi,$$
where $\gamma$ is harmonic.  Since $\varphi =0$ on $E$, we have $L_{I_{1}} (\varphi)=0$. 

\noindent We obtain
$$\sum_{I_k \subset I_{k-1} \subset \cdots \subset I_0} \int_{E_{I_k}} \dbar^{\star_{I_k}} \cG_{I_k} (\Res_{E_{I_k}} (\wh \Theta)) \wedge L_{I_{k-1}} \circ L_{I_{k-2}} \circ \cdots \circ \dbar (L_{I_{0}}  (\varphi))$$
$$= \sum_{I_k \subset I_{k-1} \subset \cdots \subset I_0} \int_{E_{I_k}} \dbar^{\star_{I_k}} \cG_{I_k} (\Res_{E_{I_k}} (\wh \Theta)) \wedge L_{I_{k-1}} \circ  \cdots \circ L_{I_{1}} (\gamma + \varphi) =0.$$
This completes the proof of Lemma \ref{keylemm}.
\end{proof}	
\medskip

\begin{remark}{\rm It is important and challenging to refine further Theorem \ref{ddbar01} and 
solve the equation $\lambda= \dbar f$ \emph{with estimates}. A quick inspection of our arguments shows that one "only" needs to understand on what the constant $C$ 
in \eqref{check2} depends. This might not be the easiest thing to do, given the proof (by contradiction) for the existence of this constant.}\end{remark}
\bigskip

\subsection{Application: an injectivity theorem}
As a corollary of Theorem \ref{ddbar}, we obtain the following result.
\begin{thm}
Let $E =\sum E_i$ be a simple normal crossing divisor on a compact Kähler manifold $X$ and $F$ be a holomorphic line
bundle on $X$ with a smooth metric $h_F$ such that $i\Theta_{h_F} (F) \geq 0$.
Let $m\in\mathbb N^\star$ and consider a section $s\in H^0 (X,m F)$ such that the zero locus $s$ contains no lc centers of the lc pair $(X, E)$. Then the multiplication map induced by the tensor product with $s$
\begin{equation}\label{inject1}
s\otimes:  \qquad H^q (X, K_X + E + F) \rightarrow H^q (X, K_X +E + (m+1)F)
\end{equation}
is injective for every $q\in \mathbb N$.
\end{thm}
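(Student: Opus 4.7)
\bigskip

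\noindent\textbf{Plan of proof.} We deduce Theorem \ref{fujino} from Theorem \ref{ddbar01}: given a class in the kernel of $s\otimes$, the plan is to construct a representative of the shape required by hypothesis \eqref{yes!}.

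Let $[\lambda]\in H^q(X, K_X+E+F)$ satisfy $s\otimes[\lambda]=0$; pick a smooth $\dbar$-closed representative $\lambda$ (an $(n,q)$-form with values in $E+F$) and a smooth $(n,q-1)$-form $u$ with values in $E+(m+1)F$ with $s\lambda=\dbar u$. Let $D:=\mathrm{div}(s)$. By hypothesis $D$ contains no lc centre of $(X,E)$, so after a log resolution we may assume $D+E$ is snc on $X$. Take $L:=F$ and endow it with the singular metric
$$h_L\,:=\,h_F\cdot\bigl(|s|_{h_F^m}^{2}\bigr)^{-\epsilon/m}$$
for a small positive rational $\epsilon$. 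Writing $D=\sum d_j Y_j$, its curvature current is
$$i\Theta_{h_L}(L)\,=\,\theta_L+\sum q_j[Y_j],$$
with $\theta_L:=i\Theta_{h_F}(F)\ge 0$ smooth and $q_j:=\epsilon d_j/m\in\,]0,1[\,\cap\mathbb Q$ for $\epsilon$ small enough. Thus $(L,h_L)$ satisfies the hypotheses of Theorem \ref{ddbar01}.

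Consider the $L$-valued $(n,q-1)$-form
$$\eta\,:=\,\frac{u}{s\cdot s_E},$$
which has log poles along $D+E$ and satisfies $\dbar\eta=\lambda/s_E$ on $X\setminus(D+E)$. The central step is to find $L$-valued log forms $\xi$ and $\chi$ along $D+E$, of respective types $(n-1,q-1)$ and $(n,q-2)$, satisfying
\begin{equation}\label{eq:keydecomp}
\eta\,=\,D'_{h_L}\xi+\dbar\chi\qquad\text{on }X\setminus(D+E).
\end{equation}
Granting \eqref{eq:keydecomp}, apply $\dbar$ and use the Bochner–Kodaira identity $\dbar D'_{h_L}+D'_{h_L}\dbar=i\Theta_{h_L}(L)\wedge\cdot$, which on $X\setminus(D+E)$ reduces to $\theta_L\wedge\cdot$. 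We obtain
$$\frac{\lambda}{s_E}\,=\,\dbar\eta\,=\,\dbar D'_{h_L}\xi\,=\,\theta_L\wedge\xi-D'_{h_L}(\dbar\xi)\qquad\text{on }X\setminus(D+E).$$
Setting $\beta_1:=-\dbar\xi$ and $\beta_2:=\xi$, both $L$-valued log forms along $E+\sum Y_j$, this is precisely the identity \eqref{yes!}. Theorem \ref{ddbar01} then yields $[\lambda]=0$ in $H^q(X,K_X+E+F)$, proving injectivity.

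The main obstacle is to establish \eqref{eq:keydecomp}. This is a logarithmic, twisted version of the statement that an $(n,q-1)$-form is, modulo $\dbar$-exact, in the image of $D'_{h_L}$, and is where the techniques from \cite{CPcan} are needed. Concretely, one views $\eta$ as an $L$-valued current in the sense of Definition \ref{currents} and applies its de Rham–Kodaira decomposition
$$\eta\,=\,\cH(\eta)+\Delta''\bigl(\cG(\eta)\bigr)$$
from Section \ref{current}. The Hodge theory for conic metrics (Theorem \ref{CoHo1}) together with the vanishing properties of harmonic forms (Lemma \ref{vanishing} and Proposition \ref{bochner}) absorb $\cH(\eta)$ into a $D'_{h_L}$-exact piece; the regularity results for Green currents of log forms (Proposition \ref{reg2}) then ensure that the resulting $\xi$ and $\chi$ carry only log-pole singularities along $D+E$. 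Controlling these singularities near the polar divisor — precisely the purpose of the current-with-conic-singularities framework of Section \ref{current} — is the crux of the argument.
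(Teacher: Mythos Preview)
Your reduction to Theorem \ref{ddbar01} is along the right lines, but there is a genuine gap at the point where you introduce $\eta$. You claim that $\eta=u/(s\cdot s_E)$ is an $L$-valued $(n,q-1)$-form with \emph{log poles} along $D+E$. This is false in general: if $D=\mathrm{div}(s)=\sum d_jY_j$ has some $d_j\geq 2$, then $u/s$ acquires a pole of order $d_j$ along $Y_j$, not a simple (log) pole. A log resolution makes the support snc but does not reduce the multiplicities. Since both Theorem \ref{ddbar01} and the decomposition \eqref{eq:keydecomp} you want require log-pole input, the argument breaks down precisely here.

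The paper deals with this issue by an explicit factorisation (Lemma \ref{reduceklt}): one constructs a finite sequence $(a_{i,k})$ so that the map $s\otimes$ decomposes as a composition of multiplications by sections $s_k$ whose divisors are \emph{reduced} (each component appears with multiplicity $0$ or $1$), while at every step the bundle $F_k$ still carries a metric with coefficients in $]0,1[$ along the $F_i$. After this reduction, $\alpha/(s_E s_k)$ genuinely has only log poles, and the required decomposition of $\dbar(\alpha/(s_E s_k))$ is produced by an elementary local construction with vector fields (Lemma \ref{killpole}) --- no Green operator or Hodge decomposition of $\eta$ is used at this stage. Note also that Lemma \ref{killpole} yields an extra term $\dbar\theta$ with $\theta$ having poles \emph{only along $E$} (not along $\sum F_i$); this is what makes $s_E\dbar\theta$ globally $\dbar$-exact and is not visible in your formulation.

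A secondary issue: your sketch for \eqref{eq:keydecomp} via the de Rham--Kodaira decomposition of $\eta$ is not convincing. It is unclear why $\cH(\eta)$ should be $D'_{h_L}$-exact (harmonic $(n,q-1)$-forms are not $D'$-exact in general), and Proposition \ref{reg2} gives $L^2$/Sobolev regularity of $\cG(\eta)$, not that $\dbar^\star\cG(\eta)$ or the other pieces have only log poles. So even setting aside the pole-order problem, this step would need a substantially different argument.
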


\begin{proof}
Let $s_E$ be the canonical section of $E$. After some desingularisation, by the assumption on the zero locus of $s$,  we can suppose that 
$$\Div (s) =\sum a_i F_i, $$ and $\sum F_i + E$ is snc. Indeed, consider a birational map $\pi:\wh X\to X$ such that the inverse image of 
$E+ \Div(s)$ is snc. Given a $(n, q)$-form $\lambda$
with values in $E+F$, the $\pi$ inverse image of $\displaystyle \frac{\lambda}{s_E}$ is a $(n, q)$-form with log-poles along the support of the inverse image of $E$, i.e. we have 
\begin{equation}
\pi^\star \left(\frac{\lambda}{s_E}\right)= \frac{\wh \lambda}{s_{\wh E}}
\end{equation}
where $\wh \lambda$ is a $(n, q)$ form on $\wh X$ with values in $\wh E+ \pi^\star(F)$. Another way to express this is the familiar equality
\begin{equation}
G+ \pi^\star(K_X+ E)= K_{\wh X}+ \wh E
\end{equation}
where $G$ is effective and the components of $\wh E$ are projecting onto the
log-canonical centers of $(X, E)$ (since we are only blowing up submanifolds which are transverse to $E$). In particular, the components of the support of $\wh E$ is disjoint from the inverse image of $\Div(s)$. 
\smallskip

\noindent Now if we are able to show that $\wh \lambda$ is 
$\dbar$-exact, then we write
\begin{equation}
\pi^\star \left({\lambda}\right)= \frac{\wh \lambda}{s_{\wh E}}\otimes \pi^\star(s_E).
\end{equation}
The RHS is holomorphic, since $\wh E$ is reduced and its support is
contained in the inverse image of $E$. It follows that the inverse image of $\lambda$ is zero in cohomology and then 
the same is true for $\lambda$.
\bigskip
	
\noindent We suppose for simplicity that $\Div (s)= aF_1 +bF_2$ (the general case follows from the same argument, by using the remark after Lemma \ref{reduceklt}). Let $s_i$ and $s_E$
be the canonical section of $F_i$ and of $E$, respectively.
One of the important points in our proof is to construct a sequence 
$(a_k, b_k)\in \mathbb N^2$ with the following properties.
\begin{enumerate}
\smallskip
  
\item[(1)] We have $(a_0, b_0)= (0,0), (a_N, b_N)= (a, b)$ and for each $k\geq 0$ the successive differences $a_{k+1}-a_k, b_{k+1}-b_k$ are elements of the set $\{0,1\}$.
\smallskip
  
\item[(2)] The bundle $F_k:= F+ a_k F_1 +b_k F_2$ admits a metric $\displaystyle h_k$ whose local weights can be written as
$$\varphi_k= c\varphi_F+  c_1\log|f_1|^2 + c_2\log|f_2|^2 $$
where $h_F= e^{-\varphi_F}$ is the non-singular, semi-positively curved metric on $F$, the constant $c$ is positive and $c_i\in ]0, 1[$ for $i=1, 2$.     
\end{enumerate}
Such sequence will be constructed in Lemma \ref{reduceklt}. We denote by
\begin{equation}
s_k:= s_1 ^{(a_{k+1}-a_k)} \otimes  s_2 ^{(b_{k+1}-b_k)}  \in H^0 (X, (a_{k+1}-a_k)F_1 +(b_{k+1}-b_k)F_2) .
\end{equation}
\smallskip

\noindent The property (1) above shows that \eqref{inject1} can written as composition of the maps
\begin{equation}\label{first}
s_k : H^q (X, K_X + E + F_k) \rightarrow H^q (X, K_X +E+ F_{k+1}).
\end{equation}
Thus, in order to conclude it is sufficient to prove that \eqref{first} is injective for every $k$. 	
\bigskip
		
\noindent Let $\lambda$ be a smooth form in  $H^q (X, K_X + E + F_k)$ such that 
$$\lambda \otimes s_k  =0 \in H^q (X, K_X +E+ F_{k+1}).$$ 
In other words there exists a smooth form $\alpha\in C^\infty _{(n,q-1)} (X,E+ F_{k+1})$ such that 
$$s_k \otimes \lambda = \dbar \alpha.$$
By property (2) we can equip $F_k= F + a_k F_1 +b_k F_2$ with the metric $h_k$. Now since the weight of the
metric $h_k$ contains a singular part $c_i\log|f_i|^2$ for some rational number $0<c_i <1$ and it follows from Lemma \ref{killpole} below that we can find forms $\beta_1, \beta_2$ with log poles along $E+F$ and $\theta$ with log poles along  $E$, such that the equality
\begin{equation}
 \frac{\lambda}{s_E} = \dbar \Big(\frac{\alpha}{s_{E}s_k}\Big) =
 D' _{h_k} (\beta_1) + \dbar \theta + i\Theta_{h_k} \wedge \beta_2
\end{equation} 
holds true on $X\setminus (E +F_1 +F_2)$. 
Therefore 
\begin{equation}\label{1ad}
	\frac{\lambda}{s_E} - \dbar \theta  = D' _{h_k} (\beta_1) + i\Theta_{h_k} \wedge \beta_2 
\end{equation} 
in the complement of $E +F_1 +F_2$.  Since $\theta$ is smooth near a generic point of $F_1 +F_2$, the forms on the LHS of \eqref{1ad} are non-singular at the generic point of $F_1+ F_2$, we are precisely in the framework of Theorem \ref{ddbar}.  
\smallskip

\noindent We infer  that $$\displaystyle \lambda - s_E\dbar \theta$$ is $\dbar$-exact. Since $\theta$ has by construction only simple poles along $E$, $\dbar (\theta) \otimes s_E =\dbar (\theta\cdot s_E)$ is $\dbar$-exact.  Then $\lambda$ is $\dbar$-exact and \eqref{first} is injective. The theorem is thus proved, modulo the two lemmas below.
\end{proof}
\medskip

\noindent To complete our proof, we establish next the auxiliary statements we used; we start with the existence of forms $\beta_i, \theta$.
\begin{lemme}\label{killpole}
Let $F_1 +F_2 +E$ be a snc divisor on $X$ and let $L$ be a holomorphic line bundle on $X$ with a possible singular metric $h_L$ such that 
$$i\Theta_{h_L} (L) = c_1 [F_1] + c_2 [F_2] + \theta_L$$
where $c_1, c_2$ are non zero and $\theta_L$ is smooth. Let $A$ be a $L$-valued smooth $(n,0)$-form on $X\setminus (E+F_1+F_2)$ with log poles along $E+F_1 +F_2$. Then we can find 
forms $\beta_1, \beta_2$ with at most log poles along $E+F_1 +F_2$, and $\theta$ with at most log poles along $E$, such that the equality
$$\dbar A= \dbar \theta +
D' _{h_L} (\beta_1) + i\Theta_{h_L} \wedge \beta_2 $$ 
holds in the complement of the support of $E +F_1 +F_2$.
\end{lemme}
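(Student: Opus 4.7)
The proof rests on the algebraic trade identity
\[
c_1\tfrac{df_1}{f_1}\wedge\gamma+c_2\tfrac{df_2}{f_2}\wedge\gamma\;=\;\partial\gamma-D'_{h_L}(\gamma)-\partial\varphi_{L,0}\wedge\gamma,
\]
valid on $X\setminus(F_1+F_2)$ for any $L$-valued form $\gamma$, where $\varphi_L=c_1\log|f_1|^2+c_2\log|f_2|^2+\varphi_{L,0}$ is the local weight of $h_L$. Combined with $\partial\dbar=-\dbar\partial$ and the identity $\partial\varphi_{L,0}\wedge\dbar A=c\,\theta_L\wedge A-\dbar(\partial\varphi_{L,0}\wedge A)$ (with a universal constant $c$), it allows one to trade a pole term $\tfrac{df_i}{f_i}\wedge(\cdot)$ for contributions to $D'_{h_L}(\beta_1)$, $\theta_L\wedge\beta_2$, and $\dbar\theta$.

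First I would settle the one-component case, which serves as model: when $A$ has log poles only along $E+F_1$, decomposing $A=\tfrac{df_1}{f_1}\wedge A_1+A_0$ with $A_1,A_0$ having log poles only along $E$, and applying the trade to $\gamma=\dbar A_1$ yields
\[
\dbar A\;=\;D'_{h_L}\!\bigl(\tfrac{1}{c_1}\dbar A_1\bigr)+\theta_L\wedge\bigl(\tfrac{c}{c_1}A_1\bigr)+\dbar\bigl(\tfrac{1}{c_1}D'_{h_L,0}(A_1)+A_0\bigr)+\tfrac{c_2}{c_1}\tfrac{df_2}{f_2}\wedge\dbar A_1,
\]
where $D'_{h_L,0}:=\partial-\partial\varphi_{L,0}\wedge$. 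The first three summands are already in the required shape with $\theta:=\tfrac1{c_1}D'_{h_L,0}(A_1)+A_0$ having log poles only along $E$; the last is a \emph{cross-pole} residual along $F_2$.

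For the full two-component case I would locally decompose $A=\tfrac{df_1\wedge df_2}{f_1f_2}\wedge A_{12}+\tfrac{df_1}{f_1}\wedge A_1+\tfrac{df_2}{f_2}\wedge A_2+A_0$ (all four coefficients with log poles only along $E$, obtained by extending the global residues along $F_1,F_2,F_1\cap F_2$ to $X$ through a partition of unity), apply the one-component trade to $\dbar A_1$ and $\dbar A_2$, and generate the double-pole term $\tfrac{df_1\wedge df_2}{f_1f_2}\wedge\dbar A_{12}$ by including $\tfrac{1}{c_2}\tfrac{df_1}{f_1}\wedge\dbar A_{12}$ in $\beta_1$, using
\[
D'_{h_L}\bigl(\tfrac{df_1}{f_1}\wedge\xi\bigr)=-\tfrac{df_1}{f_1}\wedge D'_{h_L,0}(\xi)+c_2\tfrac{df_1\wedge df_2}{f_1f_2}\wedge\xi.
\]

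The main obstacle is the absorption of the cross-pole residuals $\tfrac{c_j}{c_i}\tfrac{df_j}{f_j}\wedge\dbar A_i$, since a naive iteration of the trade merely cycles between $F_1$- and $F_2$-residuals. I would resolve this by exploiting that $\beta_1,\beta_2$ are permitted to carry log poles along $F_1+F_2$: adding to $\beta_1$ supplementary log-pole terms of the shape $\tfrac{df_j}{f_j}\wedge\xi_j$ whose $D'_{h_L}$ produces exactly a $\tfrac{df_j}{f_j}\wedge D'_{h_L,0}(\xi_j)$ single-pole contribution, one can match the residual by a local choice of $\xi_j$ in each adapted chart, since the resulting equation is a first-order system in the transverse $F_j$-direction and hence locally solvable. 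The local solutions are then glued by a partition of unity subordinate to the covering by adapted charts; the patching discrepancies, being smooth with log poles only along $E$, are absorbed into $\theta$ without altering the pole requirements on $\theta,\beta_1,\beta_2$.
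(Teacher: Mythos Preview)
Your approach via the full residue decomposition of $A$ and the ``trade identity'' is quite different from the paper's, and the resolution of what you call the ``main obstacle'' has a genuine gap.

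The paper never decomposes $A$ according to all of its poles. Instead, it builds a global smooth vector field $V_1$ adapted to $F_1$ (by gluing the local Euler fields $z_1\partial_{z_1}$ with a partition of unity) and uses that, since $A$ has top holomorphic degree $(n,0)$, the contraction $V_1\rfloor A$ automatically has \emph{no pole along $F_1$} while still carrying log poles along $E+F_2$. For a suitable constant $c$ the form $A_1:=A-c\,D'_{h_L}(V_1\rfloor A)$ then has log poles only along $E+F_2$, and the commutator identity $\dbar D'_{h_L}+D'_{h_L}\dbar=i\Theta_{h_L}$ gives
\[
\dbar A=\dbar A_1+D'_{h_L}\bigl(c\,\dbar(V_1\rfloor A)\bigr)+i\Theta_{h_L}\wedge\bigl(c\,V_1\rfloor A\bigr)
\]
pointwise on $X\setminus(E+F_1+F_2)$. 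One then repeats the construction with a field $V_2$ adapted to $F_2$ applied to $A_1$. Globality is built into $V_1,V_2$ from the start, so no local PDE solving or gluing of solutions is needed, and there is no cross-pole cycling because at each step the auxiliary form is allowed to keep its poles along the remaining $F_j$.

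Your approach, by contrast, extracts residues so that $A_1,A_2,A_{12}$ have poles only along $E$, and this is exactly what forces the cycling. Your proposed fix does not close the gap, for two reasons. First, including $\tfrac{df_j}{f_j}\wedge\xi_j$ in $\beta_1$ produces, besides the single-pole term $-\tfrac{df_j}{f_j}\wedge D'_{h_L,0}(\xi_j)$ you want, an unwanted \emph{double-pole} term $c_i\,\tfrac{df_j\wedge df_i}{f_jf_i}\wedge\xi_j$ that you do not address. Second, the local solvability of $D'_{h_L,0}(\xi_j)=\tfrac{c_j}{c_i}\dbar A_i$ is not justified: this is a $\partial$-type equation with $(n-1,1)$-target, whose local exactness requires $D'_{h_L,0}$-closedness of the right-hand side, and $D'_{h_L,0}\dbar A_i$ is not zero (it equals $-\dbar D'_{h_L,0}A_i$ plus a curvature term). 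Even granting solvability, after gluing the error in $D'_{h_L,0}(\xi_j)$ reappears in the final formula wedged with $\tfrac{df_j}{f_j}$, so the discrepancy carries a pole along $F_j$ and cannot be absorbed into $\theta$, contrary to your claim.
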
	

\begin{proof}
Let $\{\Omega_\alpha\}_\alpha$ be a cover of $X$ and let $\theta_\alpha$ be a partition of unity with respect to the covering. We suppose that $F_1 +F_2$ on $\Omega_\alpha$ are defined by $z_{1, \alpha} \cdot z_{2, \alpha}=0$.  
We define a smooth vector field 
$$V_1 := \sum_\alpha \theta_\alpha \frac{\partial}{\partial z_{1, \alpha}}\in C^\infty (X , T_X) .$$ 
Now on any local open set $\Omega\subset X$, we suppose that $F_1 +F_2$ is defined by $z_1 \cdot z_2 =0$. Then $V_1$ is of type:
$$V_1 |_{\Omega} = (z_1 + \O((z_1)^2 ) \frac{\partial}{\partial z_1} + \O(z_1z_2) \frac{\partial}{\partial z_2}+ \sum_{i\geq 3} a_i \frac{\partial}{\partial z_i}.$$
Then there is a constant $c$ such that $\displaystyle A_1 :=A- c D' _{h_L} (V_1 \rfloor  A)$ is smooth on $X\setminus (E+F_2)$ and simple pole along $E+F_2$.   
Moreover, by construction, $V_1 \rfloor  A$ has no poles along $F_1$ and it has at most log poles along $E+ F_2$. Note that we used the fact that $A$ is a $(n,0)$-form where $n =\dim X$.
\smallskip

\noindent Now 
$$\dbar (A)  =\dbar (A_1) + c \dbar (D' _{h_L} (V_1 \rfloor  A))  
=\dbar (A_1)  + D'_{h_L} \dbar (c V_1 \rfloor  A) + i\Theta_{h_L} (L) \wedge (c V_1 \rfloor  A)$$
pointwise on $X\setminus (E +F_1 +F_2)$.
By construction, $\dbar (c V_1 \rfloor  A) $ and $c V_1 \rfloor  A$ have log poles $E +F_2$. 
\smallskip

\noindent We repeat the same argument for $A_1$ to remove its pole along $F_2$. The resulting forms are $\theta$ with log poles along $E$ and $\beta_1, \beta_2$ with log poles along $E+ F$ so our lemma proved.
\end{proof}
\medskip

\noindent Finally, the construction of the sequence $(a_k, b_k)_{k=0,\dots, N}$ with the two properties (1) and (2) can be done as follows.
\begin{lemme}\label{reduceklt}
Let $F$ be a holomorphic line bundle on $X$ and let $m\in \mathbb N^\star$. Let $s\in H^0 (X, m F)$  such that $\Div (s) = a F_1 +b F_2$ for some $a, b \in \mathbb N^\star$. 
We consider a sequence $\{(a_k, b_k)\}_{k=1}^N \subset \mathbb N^2$ constructed as  follows:  
	
\begin{enumerate}

\item[\rm (i)] We define $(a_0, b_0)= (0,0)$. For $k\geq 1$, if $(a_k, b_k)= (a,b)$, then we stop at this step. If not, we take $c_k := \min\{\frac{a_k}{a}, \frac{b_k}{b} \}$.

\item[\rm (ii)] If $\frac{a_k}{a} > c_k$, then we set $a_{k+1} :=a_k$ and if
 $\frac{a_k}{a} =c$, we define $a_{k+1} :=a_k+1$.

\item[\rm (iii)] We do the same for $b_{k+1}$: if $\frac{b_k}{b} >c_k $, then
 $b_{k+1} :=b_k$ and if $\frac{b_k}{b} =c$, then we set $b_{k+1} :=b_k+1$.
\end{enumerate}	

\noindent Then for every $k\in \mathbb N$, we have 
\begin{equation}\label{mainlem}
0 = \min \{a_k - c_k a, b_k -c_k b\}	\leq  \max \{a_k - c_k a, b_k -c_k b\} <1 .
\end{equation}
In particular, there exist $\displaystyle c_{1,k}, c_{2,k} \in ]0,1[ \cap \mathbb Q$ and $d_k \in \mathbb Q^+$ such that
	\begin{equation}\label{kltpair}
		F+ a_k F_1 + b_k F_2 \equiv d_k F + c_{1,k} F_1 +c_{2,k} F_2.
	\end{equation}
\end{lemme}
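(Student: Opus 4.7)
The plan is to verify the double inequality in \eqref{mainlem} by induction on $k$ and then read off \eqref{kltpair} by elementary rational manipulations using the linear equivalence $mF\equiv aF_1+bF_2$ provided by the section $s$.

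For \eqref{mainlem}, the equality $\min\{a_k-c_ka,\,b_k-c_kb\}=0$ is tautological from the definition $c_k=\min\{a_k/a,\,b_k/b\}$. The real content is the strict inequality $\max\{a_k-c_ka,\,b_k-c_kb\}<1$. Observing that if $c_k=a_k/a$ then $b_k-c_kb=(ab_k-a_kb)/a$, and symmetrically if $c_k=b_k/b$ then $a_k-c_ka=(a_kb-ab_k)/b$, this inequality is equivalent to the single statement that the integer $I_k:=ab_k-a_kb$ lies in the open interval $(-b,a)$ for every $k$. I would establish the invariant $I_k\in(-b,a)$ by induction, with base case $I_0=0$. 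Three cases arise in the induction step, dictated by the three branches of the algorithm: if $I_k>0$ then $c_k=a_k/a$ and $(a_{k+1},b_{k+1})=(a_k+1,b_k)$, so $I_{k+1}=I_k-b\in(-b,a-b)\subset(-b,a)$; if $I_k<0$ then symmetrically $I_{k+1}=I_k+a\in(a-b,a)\subset(-b,a)$; and if $I_k=0$ both coordinates are incremented, giving $I_{k+1}=a-b\in(-b,a)$. Hence the invariant persists for all $k$.

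For \eqref{kltpair}, the section $s\in H^0(X,mF)$ with divisor $aF_1+bF_2$ yields $mF\equiv aF_1+bF_2$ as $\mathbb Q$-divisors, so
\[
F+a_kF_1+b_kF_2\;\equiv\;(1+c_km)F+(a_k-c_ka)F_1+(b_k-c_kb)F_2.
\]
By \eqref{mainlem} the coefficients of $F_1,F_2$ on the right-hand side lie in $[0,1)\cap\mathbb Q$, with at least one of them equal to zero. To push them into the open interval $]0,1[$ as required, I would perturb $c_k$ by a sufficiently small positive rational $\varepsilon$: substituting $c_k-\varepsilon$ for $c_k$ adds $\varepsilon a$ and $\varepsilon b$ to the respective coefficients of $F_1$ and $F_2$ while replacing $d_k$ by $1+(c_k-\varepsilon)m$. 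For small enough rational $\varepsilon>0$, all three of $d_k,c_{1,k},c_{2,k}$ land in the prescribed ranges.

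The argument is purely combinatorial, with no geometric obstacle beyond the linear equivalence furnished by $s$. The only mildly delicate point is remembering to treat the degenerate case $I_k=0$ separately in the induction, so that the simultaneous increment of $a_k$ and $b_k$ is correctly accounted for; everything else is bookkeeping.
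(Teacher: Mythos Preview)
Your proof is correct and follows essentially the same approach as the paper: an induction on $k$ with a three-case analysis for \eqref{mainlem}, followed by the same small rational perturbation $c_k\mapsto c_k-\varepsilon$ to obtain \eqref{kltpair}. Your repackaging of the inductive invariant as the single integer $I_k=ab_k-a_kb\in(-b,a)$ is a clean reformulation of the paper's direct case-by-case check that $\max\{a_{k+1}-c_{k+1}a,\,b_{k+1}-c_{k+1}b\}<1$, but the underlying argument is the same.
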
	

\begin{proof}
We prove \eqref{mainlem} by induction.  By the definition of $c_k$, we have 
	$$ \min \{a_k - c_k a, b_k -c_k b\} =0$$ 
so it is enough to establish the strict inequality for $\max$.

\noindent Assume that \eqref{mainlem} holds for $k$. Note first that by construction, $c_{k+1} > c_k$. There are three cases to consider.
\smallskip

\noindent $\bullet$ $c_k = \frac{a_k}{a}= \frac{b_k}{b} $.  Then $a_{k+1} -c_k a = b_{k+1} - c_k b =1$. As $c_{k+1}> c_k$, we know that \eqref{mainlem} holds for $k+1$.	
\smallskip

\noindent $\bullet$ $c_k = \frac{a_k}{a} < \frac{b_k}{b} $.  Then $a_{k+1} =a_k +1 $ and $b_{k+1} =b_k $. Therefore 
$$a_{k+1} - c_{k+1} a = a_k +1 - c_{k+1} a  = 1+  (c_k - c_{k+1} )a  <1 ,$$
and
$$b_{k+1} - c_{k+1} b =  b_k - c_{k+1} b < b_k - c_{k} b  <1 .$$
Then \eqref{mainlem} holds for $k+1$.	
\smallskip

\noindent $\bullet$ $c_k = \frac{b_k}{b} < \frac{a_k}{a} $.  We argue as in the second case. 
	\smallskip
	
\noindent For the second part of the lemma, thanks to \eqref{mainlem}, we have
	$$F+ a_k F_1 + b_k F_2 \equiv (1+ c_k m) F + (a_k - c_k a)F_1 +(b_k -c_k b) F_2. $$
	Let $\ep \in \mathbb Q^+$ small enough. Then 
	$$F+ a_k F_1 + b_k F_2 \equiv (1+ c_k m -\ep m ) F + (a_k - c_k a + \ep a)F_1 +(b_k -c_k b + \ep b) F_2 .$$
	Thanks to \eqref{mainlem}, for $\ep$ small enough, we know that 
	$(a_k - c_k a + \ep a)$ and $(b_k -c_k b + \ep b) \in ]0,1[$.  Moreover $(1+ c_k m -\ep m ) >0$.
	Then \eqref{kltpair} is proved.
\end{proof}	
\medskip

\noindent In general $(s=0)$ may have several components,
$\Div (s) = \sum_{1 \leq i\leq s} a_i F_i$. The construction of the sequence $\{ (a_{1,k},\cdots, a_{s,k})\}_{k\in\mathbb N}$ is absolutely identical.	

Indeed, suppose that $(a_{1,k},\cdots, a_{s,k})$ has already been constructed, for some $k\geq 1$. The next element $(a_{1,k+1},\cdots, a_{s,k+1})$ of our sequence is obtained as follows: 
	let $$c_k = \min \Big\{\frac{a_{1,k}}{a_1}, \frac{a_{2,k}}{a_2},\cdots, \frac{a_{s,k}}{a_s}\Big\}.$$ We define 
	$$a_{i, k+1} := a_{i,k}+1$$ in case $\displaystyle \frac{a_{i,k}}{a_i} =c_k$, and 
	$$a_{i, k+1} := a_{i,k}$$
if $\frac{a_{i,k}}{a_i} > c_k.$ One can easily verify by induction that for every $k\in \mathbb N$ the relation 
	\begin{equation}\label{kltpairnew}
		0 = \min \{a_{i,k} - c_k a_i \}_{i=1}^s	\leq  \max \{a_{i,k} - c_k a_i \}_{i=1}^s	 <1
	\end{equation}	
holds, precisely as we did in the case $s=2$. 
        $$$$

\section{Appendix}

\noindent We detail here the calculations needed for the evaluation of the limit \eqref{vanres48} (and discussion of the examples). They are rather long, but after all, nothing more than Taylor expansions. In connection to these topics, we are referring to the very nice article \cite{Mats} and the references therein.

\medskip

\noindent As a "warm-up" we consider the following particular case.
\smallskip

\noindent $(\diamondsuit)$ \emph{Assume that we can find a smooth metric $h_0$ on $\O(Y)$ whose local weights $\varphi_0$ are \emph{independent} on the
  $z_1,\dots, z_r$, where the
  $\displaystyle (z_i)_{i=1,\dots,n}$ are coordinates such that $\varphi_L(z)= \sum q_i\log|z_i|^2+ \varphi_{L, 0}.$}
\smallskip

\noindent In such case we argue as follows. 
Consider the following measure 
\begin{equation}\label{snc52}
d\lambda_{f, q}(z):= \frac{f(z)}{\prod_{i=1}^r z_i^{k_i}}\frac{d\lambda(z)}{\prod_{i=1}^r |z_i|^{2- 2\delta_i}}
\end{equation}
where the $\delta_i$ are strictly positive reals and $k_i$ are positive integers. 
\smallskip

\noindent Thanks to $(\diamondsuit)$ we immediately see that the quantity 
\begin{equation}\label{snc141}
\int_{(\CC^n, 0)}\rho_\ep'\left(\prod |z_i|^{2}e^{-\varphi_0(z')}\right)d\lambda_{f, q}(z) 
\end{equation}
where $z'=(z_{r+1},\dots z_n)$
tends to zero, regardless to the size of the multiplicities $q_i$. This is the case, since  
$f$ is smooth, so \eqref{snc141} amounts to the evaluation of
\begin{equation}\label{snc131}
\int_{(\CC^n, 0)}g(z')\rho_\ep'\left(\prod |z_i|^{2}e^{-\varphi_0(z')}\right)\frac{\prod_{i=1}^r z_i^{m_i}\ol z_i^{m_{\ol i}}}{\prod_{i=1}^r z_i^{k_i}}\frac{d\lambda(z)}{\prod_{i=1}^r |z_i|^{2- 2\delta_i}}   
\end{equation}
where $g$ is a smooth function only depending of the variables $z'$. 
\smallskip

\noindent The integral \eqref{snc131} is \emph{equal} to zero -for any $\ep> 0$ - unless we have
\begin{equation}
m_i- k_i= m_{\ol i}\geq 0.  
\end{equation}
But for such exponents the limit as $\ep\to 0$ of \eqref{snc131} is equal to zero, because the pole of order $k_i$ disappears, and $\delta_i>0$.
\medskip

\subsubsection{Taylor expansions} In general, i.e. in the absence of the assumption $(\diamondsuit)$, our arguments are virtually the same, modulo a few technicalities that we discuss next.

We will use 
Taylor formula with integral reminder. In order to illustrate the type of
statement we are after, we first start with a simple case.

\noindent Let $\phi$ be a smooth function defined on $\R$, and consider 
\begin{equation}\label{snc291}
f(x_1, x_2):= \phi(x_1+ x_2)
\end{equation}
a function defined say in a ball centred at the origin in $\R^2$.

\noindent The Taylor expansion of $f$ with respect to $x_1$ reads as follows
\begin{align}
  f(x_1, x_2)= & \label{snc311} \phi(x_2)+ x_1\phi'(x_2)+\dots+ \frac{x_1^k}{k!}\phi^{(k)}(x_2)\\
  + & \label{snc321} \frac{x_1^{k+1}}{(k+1)!}\int_0^1\phi^{(k+1)}(tx_1+ x_2)(1-t)^kdt.\\
\nonumber      
\end{align}
Next we expand each of the functions involved in \eqref{snc311} and \eqref{snc321} with respect to $x_2$, and we have:
\begin{align}
f(x_1, x_2)= & \label{snc331} \sum_{0\leq m_1, m_2\leq k}\frac{x_1^{m_1}x_2^{m_2}}{m_1! m_2!}\phi^{(m_1+ m_2)}(0)\\
  + & \label{snc341} \sum_{0\leq m_1\leq k}\frac{x_1^{m_1}x_2^{k+1}}{(k+1)!m_1!}\int_0^1\phi^{(m_1+k+1)}(tx_2)(1-t)^kdt \\
+ & \label{snc351} \sum_{0\leq m_2\leq k}\frac{x_1^{k+1}x_2^{m_2}}{(k+1)!m_2!}\int_0^1\phi^{(k+1+ m_2)}(tx_1)(1-t)^kdt\\
  + & \label{snc361} \frac{x_1^{k+1}x_2^{k+1}}{(k+1)!^2}\int_0^1\int_0^1\phi^{(2k+2)}(t_1x_1+ t_2x_2)(1-t_1)^k(1-t_2)^kdt_1dt_2.\\
\nonumber      
\end{align}

\noindent  Now we remark that in the expressions \eqref{snc331}-\eqref{snc361} we have three type of terms, according to the exponents of $x_1$ and $x_2$, respectively.
\begin{enumerate}

\item[\rm (1)] If both exponents $m_1$ and $m_2$ are smaller than $k$, then
  the function $\phi^{(m_1+ m_2)}(0)$ is constant with respect to both variables.

\item[\rm (2)] If, say, $m_1\leq k$ and $m_2= k+1$, then the function in question is only depending on $x_2$. The same is of course true if the roles of $x_1$ and $x_2$ are exchanged.

\item[\rm (3)] If both exponents $m_1$ and $m_2$ are equal to $k+1$, then the function in \eqref{snc361} is smooth (and in general, depending on both variables).
\end{enumerate}
\smallskip

\noindent This is not limited to the case of two variables, and the result is the same: the function which multiplies the monomial
\begin{equation}\label{snc371}
x_1^{m_1}\dots x_r^{m_r}
\end{equation}
is independent on $x_i$ if we have $m_i\leq k$, for each $i=1,\dots, r$
\medskip

\noindent Coming back to our problem, we will use a variation of this type of considerations in order to obtain an expansion which is adapted to our context. 

\smallskip Assume that the section $s$ of $\O(Y)$ corresponds to the monomial 
$z_1\dots z_r$ when restricted to $\Omega$
\begin{equation}\label{snc411}
s|_\Omega\simeq z_1\dots z_r.
\end{equation}  

\noindent The function we want to expand is
\begin{equation}\label{snc4011}
\rho_\ep\Big(\log\log\frac{1}{|s|^2e^{-\phi(z)}}\Big).
\end{equation}
To this end, remark that we have 
\begin{equation}\label{snc40111}
\log\frac{1}{|s|^2e^{-\phi(z)}}= \log\frac{1}{|s|^2e^{-\phi_{\wh 1}{(z)}}}+ z_1\phi_1(z)+ \ol {z_1}\phi_{\ol 1}(z)
\end{equation}
for some functions $\phi_1, \phi_{\ol 1}$, where $\phi_{\wh 1}(z):= \phi(0, z_2,\dots,z_n)$.

\noindent We consider
a function $\tau_1$ defined by the formula
\begin{equation}
  \tau_1(z):=  \log\left(1+ \frac{z_1\phi_1(z)+ \ol {z_1}\phi_{\ol 1}(z)}{\log\frac{1}{|s|^2e^{-\phi_{\wh 1}(z)}}}\right),
\end{equation}
and then we have 
\begin{equation}\label{snc4021}
\log\log\frac{1}{|s|^2e^{-\phi(z)}}= \log\log\frac{1}{|s|^2e^{-\phi_{\wh 1}{(z)}}}+ \tau_1(z).
\end{equation}
\medskip

\noindent In order to simplify the writing, we set the following notations, valid throughout the current appendix.
\smallskip

\noindent {\bf Conventions.}

\noindent $\bullet$ \emph{We will systematically denote by
  $\displaystyle a_{\wh {1\dots p}}$ any smooth function independent of the set of variables $z_1,\dots, z_p$ and their conjugates.}
\smallskip

\noindent $\bullet$ \emph{We use the same notation e.g. $a, b, \phi, ...$ for
functions which are not necessarily identical, but they share similar properties, which will be clearly specified.   
}

\smallskip

\noindent Then we have 
\begin{align}
\mu_\ep(z)= & \label{snc161}\rho_\ep\Big(\log\log\frac{1}{|s|^2e^{-\phi_{\wh 1}{(z)}}}\Big)  \\
 + & \label{snc171}\rho_\ep'\Big(\log\log\frac{1}{|s|^2e^{-\phi_{\wh 1}(z)}}\Big)\tau_1(z)+
\dots   \\+ & \rho_\ep^{(k)}\Big(\log\log\frac{1}{|s|^2e^{-\phi_{\wh 1}(z)}}\Big)\frac{\tau^k_1(z)}{k!}\\
  + &  \label{snc181}\frac{1}{k!}\tau_1(z)^{k+1}\int_0^1\rho_\ep^{(k)}\Big(t\tau_1(z)+
 \log\log\frac{1}{|s|^2e^{-\phi_{\wh 1}(z)}}\Big)(1- t)^kdt.\\
\nonumber  
\end{align}

\noindent We consider next the quantity 
\begin{equation}
\frac{z_1\phi_1(z)+ \ol {z_1}\phi_{\ol 1}(z)}{\log\frac{1}{|s|^2e^{-\phi_{\wh 1}(z)}}}
\end{equation}
involved in the expression of the function $\tau_1$. It can be rewritten as
\begin{equation}
\frac{z_1\phi_1(z)+ \ol {z_1}\phi_{\ol 1}(z)}
{\log\frac{1}{|s|^2e^{-\phi(z')}}+ z_2a_2(z)+ \ol z_2a_{\ol 2}(z)+\dots+ z_ra_r(z)+ \ol z_ra_{\ol r}(z)}
\end{equation} 
where $a_i, a_{\ol i}$ are smooth, and $\displaystyle z':=(z_{r+1},\dots, z_n).$

\noindent In conclusion, we have 
\begin{equation}\label{snc201}
\tau_1(z)= \Psi_1\left(z_1, w_1, \dots, z_r, w_r, z'\right)
\end{equation}
where \begin{equation}\label{snc201+}\displaystyle w_i:= \frac{z_i}{\log\frac{1}{|s|^2e^{-\phi(z')}}}\end{equation}
for $i=1,\dots,r$, and $\Psi_1$ is smooth and belonging to the ideal generated by $w_1, \ol w_1$.
\smallskip

\noindent Next we iterate this: in the expressions \eqref{snc161}--\eqref{snc181}, we use the fact that  
\begin{equation}\label{snc421}
\log{|s|^2e^{-\phi_{\wh 1}(z)}}= \log{|s|^2e^{-\phi_{\wh{12}}(z)}}+ z_2\phi_{\wh 12}(z)+ \ol z_2\phi_{\wh 1\ol 2}(z)
\end{equation}
where $\displaystyle \phi_{\wh 12}$ and $\displaystyle \phi_{\wh 1\ol 2}$
are independent of $z_1$.

\noindent We define
\begin{equation}
  \tau_2(z):=  \log\left(1+ \frac{z_2\phi_{\wh 12}(z)+ \ol z_2\phi_{\wh 1\ol 2}(z)}{\log\frac{1}{|s|^2e^{-\phi_{\wh{12}}(z)}}}\right),
\end{equation}
and consider the Taylor expansion of $\rho_\ep$ and its derivatives up to order $k$
in \eqref{snc161}-\eqref{snc171}.
\smallskip

\noindent For \eqref{snc181} we do the following. First we remark that we can write
\begin{equation}\label{snc231}
\tau_1(z)= a_{\wh 2}(z,w)+ b(z,w)
\end{equation}
where the notations in
\eqref{snc231} are as indicated below.
\smallskip

\noindent $\bullet$ As above, we have $\displaystyle w_i:= \frac{z_i}{\log\frac{1}{|s|^2e^{-\phi(z')}}}$. 
\smallskip

\noindent $\bullet$ The function $a_{\wh 2}$ is smooth, and it only depends on $z_1, w_1, z_3,\dots, z_n$. 
\smallskip

\noindent $\bullet$ The function $b$ belongs to the ideal generated by $z_2, \ol z_2$.
\medskip

\noindent By \eqref{snc421} we have 
$$\log\log\frac{1}{|s|^2e^{-\phi_{\wh 1}(z)}}= \log\log\frac{1}{|s|^2e^{-\phi_{\wh {12}}(z)}}+ b(z, w)$$
with the same properties as in the third bullet above.
\medskip

Then the function inside \eqref{snc181}
can be written as follows
\begin{align}
t\tau_1(z)+ \log\log\frac{1}{|s|^2e^{-\phi(z_2, z')}}= & ta_{\wh 2}(z, w)+ \log\log\frac{1}{|s|^2e^{-\phi_{\wh {12}}(z)}} \\ + & b_0(z, w)+ tb_1(z, w). \nonumber \\
\nonumber
\end{align}
and we expand the function $\rho_\ep^{(k)}$ in \eqref{snc181} at
$$ta_{\wh 2}(z, w)+ \log\log\frac{1}{|s|^2e^{-\phi_{\wh {12}}(z)}}.$$
\medskip

\noindent After this second step, we see that the function $\mu_\ep$ can be written as sum of terms of the following type.
\begin{equation}\label{snc431}
\tau_1^{m_1}\tau_2^{m_2}\rho_\ep^{(m)}\Big(
 \log\log\frac{1}{|s|^2e^{-\phi_{\wh {12}}(z)}}\Big)
\end{equation}
where $m:= m_1+ m_2$ and both $m_i$ are smaller or equal than $k$,  
\begin{equation}\label{snc441}
\tau_1^{m_1}\tau_2^{k+1}\int_0^1\rho_\ep^{(k+1+ m_1)}\Big(t\tau_2(z)+
 \log\log\frac{1}{|s|^2e^{-\phi_{\wh {12}}(z)}}\Big)(1- t)^kdt 
\end{equation}
where $m_1\leq k+1$ together with
\begin{equation}\label{snc561}
\tau_1^{k+1}\tau_2^{m_2}\int_0^1\rho_\ep^{(k+1+ m_2)}\Big(t a_{\wh 2}(z)+
 \log\log\frac{1}{|s|^2e^{-\phi_{\wh {12}}(z)}}\Big)(1- t)^kdt
\end{equation}  
(here $\tau_2$ is not the same as in \eqref{snc441}, but it belongs to the ideal generated by $z_2$ and $\ol z_2$, so we are using the same notation).

\noindent Finally, the last term is 
\begin{equation}\label{snc4501}
\int_0^1\int_0^1\rho_\ep^{(2k+2)}\Big(t_1b_1+ t_2b_2+ t_1t_2b_3
+ \log\log\frac{1}{|s|^2e^{-\phi_{\wh {12}}(z)}}\Big)(1- t)^kdt. 
\end{equation}
multiplied with $\tau_1^{k+1}\tau_2^{k+1}$, where $\displaystyle (1- t)^kdt:=
(1- t_1)^k(1- t_2)^kdt_1dt_2$.
\medskip

\noindent The conclusion is that in the resulting expression all the terms except for the RHS of \eqref{snc161}
have support contained in the domain
\begin{equation}\label{snc221}
  \frac{1}{\ep}- C< \log\log\frac{1}{|s|^2e^{-\phi_{\wh{12}}(z)}}< \frac{1}{\ep}+ C. 
\end{equation}
Moreover, the function
\begin{equation}\label{snc271}
  z\to \rho_\ep^{(m)}\Big(
 \log\log\frac{1}{|s|^2e^{-\phi_{\wh {12}}(z)}}\Big)
\end{equation}
in \eqref{snc431} only depends on $|z_1|, |z_2|,z_3,\dots, z_n$. The functions involved is the other expressions have similar properties, i.e. the one in \eqref{snc441} is independent of $z_1$, the one in \eqref{snc561} is independent of $z_2$. Note that the function in \eqref{snc4501} depends -in general- on the full set of variables, but the exponents of $\tau_i$ are $k+1$.
\medskip

\noindent We emphasize all this because integrals of the type
\begin{equation}\label{snc261}
\int_{\Omega}\frac{\tau_1^{m_1}\tau_2^{m_2}}{z_1^{q_1}z_2^{q_2}}\rho_\ep^{(m)}\Big(
 \log\log\frac{1}{|s|^2e^{-\phi_{\wh {12}}(z)}}\Big)\Psi(z,w)\frac{d\lambda(z)}{|z_1|^{2-2\delta_1}|z_2|^{2-2\delta_2}}
\end{equation}
are converging to zero as $\ep\to 0$ for any $m\geq 1$, for any $q_1, q_2$ positive integers, and $\delta_i> 0$ positive reals. In \eqref{snc261} we denote by $\Psi$ a smooth function.
\medskip

\noindent Anyway, we iterate this procedure, and we obtain the following statement.
\begin{proposition}\label{taylor1}
The function $\mu_\ep(\cdot)$ admits a Taylor expansion whose finite number of terms are of the following type
\begin{equation}
\tau_1^{m_1}(z, w)\dots \tau_r^{m_r}(z,w)\Phi_{\ep, m}(z, w)
\end{equation}
where:
\begin{enumerate}
\smallskip
  
\item[\rm (a)] the integer $r$ is given by \eqref{snc411}, and $\max(m_i)\leq k+1$;
\smallskip
  
\item[\rm (b)] for each $i=1,\dots ,r$ we have $\displaystyle w_i:=
  \frac{z_i}{\log\frac{e^{\varphi(z')}}{|z_1\dots z_r|^2}}$, where $z':= (z_{r+1},\dots , z_n)$;
\smallskip
  
\item[\rm (c)] the functions $\tau_i$ are smooth and they belong to the ideal generated by $w_i$ and $\ol w_i$;  
\smallskip
  
\item[\rm (d)] the function $\Phi_{\ep, m}(z, w)$
only depends on $|z_i|$ and $|w_i|$ if the corresponding exponent $m_i$ is smaller or equal to $k$,
for all $i=1,\dots ,r$;
\smallskip
  
\item[\rm (e)] if $m_1=\dots = m_r= 0$, then we have
\begin{equation}\nonumber
\Phi_{\ep, 0}(z, w)= \rho_\ep\Big(\log\log\frac{e^{\phi(z')}}{|z_1\dots z_r|^2}\Big);
\end{equation}
\smallskip
  
\item[\rm (f)] the support of the function
$\Phi_{\ep, m}(z, w)$ is contained in a set similar to \eqref{snc221} (i.e. rotationally symmetric with respect to each of the variables $z_1,\dots, z_r$) if $|m|\geq 1$. Note that even if $m_1=\dots = m_r= 0$, the support of the function $\Phi_{\ep, 0}$ is still rotationally symmetric.
\end{enumerate}
\end{proposition}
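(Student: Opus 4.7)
The plan is to establish the expansion by induction on $r$, iterating the two-step procedure already carried out in the paragraphs preceding the statement. The case $r\leq 2$ is exactly the computation displayed in \eqref{snc431}--\eqref{snc4501}; the inductive step is formally identical. Concretely, assume that after $i-1$ steps we have produced an expansion whose terms are monomials in $\tau_1,\dots,\tau_{i-1}$ multiplied by derivatives of $\rho_\ep$ evaluated at $\log\log\frac{1}{|s|^2 e^{-\phi_{\wh{1\dots i-1}}(z)}}$ (with possibly some integral remainders already present). To incorporate the variable $z_i$, separate it out by writing
\[
\log\frac{1}{|s|^2 e^{-\phi_{\wh{1\dots i-1}}(z)}} \;=\; \log\frac{1}{|s|^2 e^{-\phi_{\wh{1\dots i}}(z)}} + z_i\, a_i(z) + \bar z_i\, a_{\bar i}(z),
\]
where $a_i, a_{\bar i}$ are smooth and, by construction, independent of $z_1,\dots,z_{i-1}$. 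Then define
\[
\tau_i(z) \;:=\; \log\!\left(1 + \frac{z_i a_i(z) + \bar z_i a_{\bar i}(z)}{\log\frac{1}{|s|^2 e^{-\phi_{\wh{1\dots i}}(z)}}}\right),
\]
so that $\tau_i$ is smooth on the relevant chart and belongs to the ideal generated by $w_i$ and $\bar w_i$; this takes care of (b) and (c).

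Next I apply Taylor's formula with integral remainder in the single variable $\tau_i$, up to order $k+1$, both to $\rho_\ep^{(j)}$ (for each derivative index $j$ already arising) and to any integrand of a pre-existing remainder. This produces three sorts of pieces. Pure Taylor monomials $\tau_i^{m_i}$ with $m_i \leq k$ are multiplied by the derivatives $\rho_\ep^{(j+m_i)}\bigl(\log\log\frac{1}{|s|^2 e^{-\phi_{\wh{1\dots i}}(z)}}\bigr)$, whose argument by construction does not involve $z_i$ at all (in particular does not depend on the phase of $z_i$). The genuine Taylor remainder carries $\tau_i^{k+1}$ in front of an integral $\int_0^1 \rho_\ep^{(j+k+1)}(\cdots)(1-t)^k\,dt$, whose integrand is a smooth function of $(z,w)$ that genuinely depends on all the variables so far. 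The cross terms -- obtained when the procedure is applied inside a remainder coming from an earlier step -- are of the same two types. After $r$ such iterations one obtains the announced expansion in the variables $\tau_1,\dots,\tau_r$.

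Properties (a), (d), (e), (f) then fall out by bookkeeping. If all exponents satisfy $m_j \leq k$, the coefficient of $\tau_1^{m_1}\cdots\tau_r^{m_r}$ is a derivative of $\rho_\ep$ evaluated at $\log\log\frac{e^{\phi(z')}}{|z_1\dots z_r|^2}$, which depends only on the moduli $|z_j|$ (equivalently on $|w_j|$) and on $z'$; this yields (d) and, for $m=0$, (e). As soon as some $m_i = k+1$ the corresponding factor is the integral remainder, in which (d) is no longer required. Property (f) reflects the fact that $\rho_\ep^{(j)}$ is supported inside the shell $\frac{1}{\ep}-C < \log\log\frac{e^{\phi(z')}}{|z_1\dots z_r|^2} < \frac{1}{\ep}+C$ for every $j\geq 0$, and this shell is rotationally symmetric in each $z_i$. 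The main obstacle is the bookkeeping rather than any hard analysis: one must verify at each stage that the argument of $\rho_\ep^{(j)}$ appearing in a Taylor coefficient has been updated so as to depend only on variables not yet expanded out, which is precisely what makes the Taylor coefficients phase-independent and what forces the clean trichotomy between terms with all $m_j \leq k$, terms with some $m_j=k+1$, and the base term $m=0$.
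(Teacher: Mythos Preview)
Your proposal is correct and follows precisely the paper's approach: the paper itself carries out the cases $r=1$ and $r=2$ explicitly in the paragraphs preceding the statement and then simply writes ``Anyway, we iterate this procedure, and we obtain the following statement,'' so your inductive formalization is exactly what is intended. One very minor wording slip: property~(d) is required variable-by-variable (i.e.\ $\Phi_{\ep,m}$ is phase-independent in $z_i$ whenever $m_i\le k$, even if some other $m_j=k+1$), not only when all $m_j\le k$; your construction actually delivers this, since at each step the Taylor coefficients of order $\le k$ in $\tau_i$ have their argument updated to $\phi_{\wh{1\dots i}}$ and hence lose all phase dependence in $z_i$.
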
 
\medskip

\noindent Thanks to Proposition \ref{taylor1}, the integrals we have to evaluate 
are of the following type
\begin{equation}\label{snc570}
\int_{(\CC^n, 0)}\rho_\ep^{(m)}(z)\frac{\Psi(z, w)}{\prod_{i=1}^r z_i^{b_i}}\frac{ d\lambda(z)}{\prod_{i=1}^r |z_i|^{2- 2\delta_i} 
\log |\prod_{i=1}^r z_i^{p_i}|^2 }
\end{equation}
where $m\geq 1$ and $z':= (z_{r+1},\dots ,z_n)$. We denote by $\Psi$ a smooth function, the $b_i$ in \eqref{snc570} are positive integers and 
$$\rho_\ep^{(m)}(z):= \rho_\ep^{(m)}\Big(\log\log\frac{e^{\phi(z')}}{|z_1\dots z_r|^2}\Big).$$ Now as $\ep\to 0$ the expression \eqref{snc570} clearly tends to zero, as we have already explained.
\medskip

\noindent To end with, we discuss now the examples presented in subsection \ref{subexample}.
\smallskip

\noindent Let $E= \sum E_i$ be a divisor on $X$ such that $E+Y$ is snc and let $\Theta$ be a $(p, q)$-form with values in $L$ and log poles along $E+Y$. We show next that the formula
\begin{equation}\label{app1}
\int_X\Theta\wedge \ol{\rho}:= \lim_{\ep\to 0} \sum_i\int_{U_i}\mu_\ep(w)\frac{\theta_i}{w_i^{mq}}\pi_i^\star\Theta\wedge \ol{\rho_i}
\end{equation}
defines a $(p, q)$-current with values in $(L, h_L)$. In \eqref{app1} we denote by $\rho$ a $(n-q, n-p)$-form with values in $L$, smooth in conic sense, so that $\rho_i$ on the LHS
is $\cC^\infty$.

\noindent In what follows we drop the index "i" and assume that locally the equations of the two divisors are
\begin{equation}\label{app2}
Y\cap V= (z_1\dots z_r= 0), \qquad E\cap V= (z_{r+1}\dots z_{r+ s}= 0).
\end{equation}
Then the integrals we have to evaluate are written as follows
\begin{equation}\label{app3}
\int_{U}\mu_\ep(w)\theta (|w|)\frac{f(w)d\lambda(w)}{\prod_{i=1}^r w_i^{b_i}\prod_{j=r+1}^{r+s}w_j}
\end{equation}
where $f$ is obtained by coefficients of $\rho_i$ multiplied with smooth functions (coefficients of the inverse image of $\Theta$). 
\smallskip

\noindent Now Proposition \ref{taylor1} shows that it would be enough to assume that 
\begin{equation}\label{app4}
\mu_\ep(w)= \rho_\ep\Big(\log\log\frac{e^{\phi(w'')}}{|w_1\dots w_r|^{2m}}\Big)
\end{equation}
where $w''= (w_{r+1},\dots, w_{n})$. We consider the Taylor expansion of $f$ with respect to 
the first $r$ variables and observe that for the integrals
\begin{equation}\label{app5}
\int_{U}\mu_\ep(w)\theta (|w|)\frac{\prod_{i=1}^r w_i^{\alpha_i}\ol w_i^{\beta_i}}{\prod_{i=1}^r w_i^{b_i}\prod_{j=r+1}^{r+s}w_j}d\lambda(w)
\end{equation}
is equal to zero as soon as for some index $i$ we have $\alpha_i+\beta_i\leq b_i-1$: indeed, the integral in \eqref{app5} is uniformly convergent, so our assertion follows by Fubini together with the fact that the integration domain is rotationally symmetric. 
\smallskip

\noindent Therefore, only the terms 
\begin{equation}\label{app6}
\int_{U}\mu_\ep(w)\theta (|w|)\frac{\prod_{i=1}^r w_i^{\alpha_i}\ol w_i^{\beta_i}f_{\alpha\beta}(w)}{\prod_{i=1}^r w_i^{b_i}\prod_{j=r+1}^{r+s}w_j}d\lambda(w)
\end{equation}
with $\alpha_i+ \beta_i= b_i$ for each $i$ "survive". But this type of integrals are bounded by 
$\displaystyle \sup_U |f_{\alpha\beta}|$, which in turn is bounded by the sup norm of a certain number of derivatives of $\rho_i$. 
\smallskip

\noindent Hence $\Theta$ induces a $(p, q)$-current with values in $(L, h_L)$, as claimed in 
\ref{subexample}. A last remark is that if $p= n$ and $\Theta$ has only log-poles along $E$, then the limit process above is not needed 
in case $q_i< 1$ because the quantity
$\displaystyle \frac{1}{w^{mq}}\pi^\star(dz_1\wedge\dots\wedge dz_n)$ is bounded.

\end{document}